\documentclass[12pt]{amsart}
\usepackage{amsmath}
\usepackage{amssymb}
\usepackage{amsfonts}
\usepackage{amsthm}
\usepackage{amscd}
\usepackage{relsize}
\usepackage{setspace}
\usepackage{geometry}
\usepackage{enumerate}
\usepackage{url}
\usepackage{xspace}
\usepackage{mathrsfs}
\usepackage{dsfont}
\usepackage{lmodern}

\usepackage{hyperref}

\geometry{includehead, includefoot, left=35mm, right=35mm, top=20mm, bottom=20mm} 

\newtheorem{theorem}{Theorem}[section]
\newtheorem{lemma}[theorem]{Lemma}
\newtheorem{prop}[theorem]{Proposition}
\newtheorem{corollary}[theorem]{Corollary}

\theoremstyle{definition}
\newtheorem{remark}[theorem]{Remark}

\newtheorem{definition}[theorem]{Definition}
\newtheorem{question}[theorem]{Question}

\newcommand{\triv}{\{1\}}

\newcommand{\QZ}{\mathrm{QZ}}
\newcommand{\CC}{\mathrm{C}}
\newcommand{\N}{\mathrm{N}}
\newcommand{\Z}{\mathrm{Z}}
\newcommand{\tdlc}{t.d.l.c.\@\xspace}

\newcommand{\cG}{\mathcal{G}}
\newcommand{\cH}{\mathcal{H}}
\newcommand{\cL}{\mathcal{L}}
\newcommand{\cU}{\mathcal{U}}
\newcommand{\ldlat}{\mathcal{LD}}

\newcommand{\lnorm}{\mathcal{LN}}
\newcommand{\lcent}{\mathcal{LC}}
\newcommand{\Res}{\mathrm{Res}}

\newcommand{\defbold}{\textbf}

\newcommand{\bC}{\mathbb{C}}
\newcommand{\bF}{\mathbb{F}}
\newcommand{\bN}{\mathbb{N}}

\newcommand{\bR}{\mathbb{R}}
\newcommand{\bZ}{\mathbb{Z}}

\newcommand{\mc}{\mathcal}

\newcommand{\ms}{\mathscr}

\newcommand{\g}{\mathfrak{g}}
\newcommand{\G}{\mathfrak{G}}
\newcommand{\hh}{\mathfrak{h}}

\newcommand{\WW}{\mathcal{W}}

\setlength{\parindent}{0pt}
\setlength{\parskip}{3mm}

\newcommand{\inv}{^{-1}}

\newcommand{\la}{\langle}
\newcommand{\ra}{\rangle}
\newcommand{\co}{\colon\thinspace}

\newcommand{\grp}[1]{\langle #1 \rangle}
\newcommand{\ol}[1]{\overline{#1}}


\DeclareMathOperator{\CAT}{CAT(0)}
\DeclareMathOperator{\Aut}{Aut}

\DeclareMathOperator{\con}{con}
\DeclareMathOperator{\GL}{GL}

\DeclareMathOperator{\height}{ht}

\DeclareMathOperator{\Kac}{Kac}
\DeclareMathOperator{\Pc}{Pc}
\DeclareMathOperator{\proj}{proj}

\DeclareMathOperator{\supp}{supp}
\DeclareMathOperator{\Hom}{Hom}
\DeclareMathOperator{\sph}{sph}

\begin{document}

\title[Locally normal subgroups and ends of Kac--Moody groups]{Locally normal subgroups and ends\\ 
of locally compact Kac--Moody groups}

\author{Pierre-Emmanuel \textsc{Caprace}}
\author{Timoth\'{e}e \textsc{Marquis}}
\author{Colin D. \textsc{Reid}}

\thanks{TM is a F.R.S.-FNRS Research associate; PEC and TM are supported in part by the FWO and the F.R.S.-FNRS under the EOS programme (project ID 40007542); CDR: Research supported in part by ARC grant FL170100032.}

\begin{abstract}
A locally normal subgroup in a topological group is a subgroup whose normaliser is open. 
In this paper, we provide a detailed description of the large-scale structure of closed locally normal subgroups of complete Kac--Moody groups over finite fields.  
Combining that description with the main result from \cite{Growingtrees}, we  show that under mild assumptions, if the Kac--Moody group is one-ended (a property that is easily determined from the generalised Cartan matrix), then it is locally indecomposable, which means that no open subgroup decomposes as a nontrivial direct product. 
\end{abstract}

\maketitle

\section{Introduction}

The general structure theory of totally disconnected locally compact (\tdlc) groups has developed dramatically over the last decades. The emphasis has been on those \tdlc groups that are compactly generated (any \tdlc group being a direct limit of such groups). One key aspect in this line of research is the decomposition of a compactly generated \tdlc group into simpler pieces; in particular, the paper \cite{CaMo11} showcases the special role, in the study of general \tdlc groups, of the class $\ms{S}$ of compactly generated \tdlc groups that are topologically simple and nondiscrete. Another key aspect is the study of \tdlc groups via their local structure, namely, properties evident in an arbitrarily small neighbourhood of the identity. Fundamental examples of local invariants are the structure, centraliser, and decomposition lattices introduced in \cite{CRWpart1}, and further studied in \cite{CRWpart2}, which we now briefly review. 

Let $G$ be a \tdlc group. Given closed subgroups $H, H' \leq G$, we write $H\sim_oH'$ if $H\cap H'$ is open in both $H$ and $H'$.  This is an equivalence relation, called \textbf{local equivalence}. A subgroup $H$ of $G$ is {\bf locally normal} if its normaliser is open. The {\bf structure lattice} of $G$ is the poset $\lnorm(G) = \mathrm{LN}(G)/\sim_o$, where $\mathrm{LN}(G)$ is the set of closed locally normal subgroups of $G$ ordered by inclusion. Let us now suppose that $G \in \ms S$. Then the sub-poset of $\lnorm(G)$ defined by
$$\lcent(G) := \mathrm{LC}(G)/\sim_o, \quad\textrm{where}\quad\mathrm{LC}(G) := \{ \CC_G(K) \ | \ K \in \mathrm{LN}(G)\},$$
admits a Boolean algebra structure, and is called the {\bf centraliser lattice} of $G$. Finally, there is also a subalgebra $\ldlat(G)$ of $\lcent(G)$, called the {\bf decomposition lattice} of $G$, consisting of those elements of $\lnorm(G)$ represented by direct factors of open subgroups. 

The centraliser (resp. decomposition) lattice always contains at least two local equivalence classes, denoted by $0$ and $\infty$, respectively defined as the class of the trivial subgroup and that of open subgroups of $G$. If $\lcent(G)=\{0, \infty\}$ (resp. $\ldlat(G) = \{0,\infty\}$), we say that it is  {\bf trivial}. This leads to a simple dichotomy within the class $\ms{S}$: either $G\in\ms{S}$ has nontrivial centraliser lattice, or it does not. In the former case, $G$ admits a continuous and faithful action by homeomorphims on a compact space $\Omega_G$ (namely, the Stone dual of the Boolean algebra $\lcent(G)$) with nice dynamical properties; this provides a powerful uniform tool to study such groups. In the latter case, the space $\Omega_G$ is reduced to a singleton and no substitute for that  tool is currently available. Our knowledge of the class of groups in $\ms{S}$ with a trivial centraliser lattice is quite limited at present: in fact, amongst the known examples of groups in $\ms{S}$, only those of ``Lie-theoretic origin'' are known or conjectured to have trivial centraliser lattice, namely, the simple algebraic groups over local fields (known) and the \emph{locally compact Kac--Moody groups} (conjectured). 

Locally compact Kac--Moody groups are obtained as completions of minimal Kac--Moody groups (as introduced in \cite{Tits87}) over a finite field. There are a few different ways to construct such completions;  for definiteness we specialise here to the complete \emph{geometric} Kac--Moody groups over finite fields: these are defined by letting the minimal Kac--Moody group act on its positive building, and then taking the closure in the permutation topology (see \cite[Chapter~8]{KMGbook}).

Locally compact Kac--Moody groups have peculiar properties that distinguish them in the class $\ms{S}$. Some locally compact Kac--Moody groups enjoy Kazhdan's property (T) (see \cite{DymaraJanusz}), and every known group in $\ms{S}$ with Kazhdan's property is either a simple algebraic group or a Kac--Moody group. It is also known that products of certain pairs of locally compact Kac--Moody groups contain irreducible non-uniform lattices, and that some of these lattices are simple groups (see \cite{CaRe}).

The first goal of this paper is to get a better understanding of the locally normal subgroups of locally compact Kac--Moody groups (see Theorem~\ref{intro:KM_loc_norm}), building on the results on their open subgroups established in \cite{openKM}. Proving that a group in $\ms{S}$ --- beyond the simple algebraic groups --- has trivial decomposition lattice (and, \emph{a fortiori}, trivial centraliser lattice) is notoriously difficult, and the second goal of this paper is to show, using a criterion we recently obtained in \cite{Growingtrees}, that (a large class of) locally compact Kac--Moody groups are indeed {\bf locally indecomposable}, that is, have trivial decomposition lattice $\ldlat(G)$ or, equivalently, no open subgroup decomposes as a nontrivial direct product (see Theorem~\ref{intro:KM_loc_indec}). 

 We now describe our main results in more details, refering to the preliminary sections \ref{sec:prelim} and \ref{sec:prelim_KM} for precise definitions and terminology.

Let $G$ be a complete geometric Kac--Moody group $G$ over a finite field.  
Consider first the poset $\widetilde{\mc{O}}(G) = \mc{O}(G)/\sim_f$, where $\mc{O}(G)$ is the class of open subgroups of $G$ ordered by inclusion and $O_1 \sim_f O_2$ if the indices $[O_1: O_1 \cap O_2]$ and $[O_2 : O_1 \cap O_2]$ are both finite.  In the present context, $\widetilde{\mc{O}}(G)$ was described up to conjugacy in $G$ by \cite[Theorem~3.3]{openKM}.  Indeed we can give a sharper description of exactly how conjugation in $G$ interacts with the partial order, via a more general result (Theorem~\ref{thm:building_commensurate}) about parabolic subgroups of groups with BN-pairs.

\begin{theorem}\label{intro:KM_open}
Let $G$ be a complete geometric Kac--Moody group over a finite field and let $(\WW,S)$ be its Weyl group.   Let $\Lambda_{(\WW,S)}$ be the set of standard parabolic subgroups of $\WW$ ordered by inclusion, and let $\Lambda^{\infty}_{(\WW,S)} = \Lambda_{(\WW,S)}/\sim_f$.  Then there is a unique map $\lambda: \mc{O}(G) \rightarrow \Lambda^{\infty}_{(\WW,S)}$, with the following properties:
\begin{enumerate}[(i)]
\item For all $J \subseteq S$, we have $\lambda(P_J) = [\WW_J]$, where $P_J$ is the standard parabolic subgroup of $G$ of type $J$ and $[\WW_J]$ denotes the $\sim_f$-class of $\WW_J$;
\item Given $H,K \in \mc{O}(G)$, we have $\lambda(H) = \lambda(K)$ if and only if there is $g \in G$ such that $gHg\inv \sim_f K$;
\item Given $H,K \in \mc{O}(G)$, we have $\lambda(H) < \lambda(K)$ if and only if there is $g \in G$ such that $gHg\inv \sim_f L$ for some subgroup $L$ of $K$ of infinite index.
\end{enumerate}
\end{theorem}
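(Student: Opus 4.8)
The plan is to derive the statement from the description of open subgroups in \cite{openKM} together with the general BN-pair result Theorem~\ref{thm:building_commensurate}, which supplies the dictionary between commensurability of parabolic subgroups in $G$ and the combinatorics of their types in $(\WW,S)$. The role of \cite{openKM} is to guarantee that $\lambda$ can be defined on \emph{all} of $\mc{O}(G)$ and not merely on parabolics, while Theorem~\ref{thm:building_commensurate} governs how conjugation and the partial order behave.

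I would first recall from \cite[Theorem~3.3]{openKM} that every $H \in \mc{O}(G)$ is $\sim_f$-equivalent to a conjugate $gP_Jg\inv$ of a standard parabolic, and use this to \emph{define} $\lambda(H) := [\WW_J]$. The point to check is that this does not depend on the choices of $g$ and $J$: if $gP_Jg\inv \sim_f H \sim_f g'P_{J'}g'\inv$, then $(g\inv g')P_{J'}(g\inv g')\inv \sim_f P_J$, and Theorem~\ref{thm:building_commensurate}, applied to the BN-pair of $G$, forces $[\WW_{J'}] = [\WW_J]$. This yields a well-defined map, and property (i) holds by construction, since $P_J$ is already $\sim_f$-equivalent to the conjugate $P_J = 1\cdot P_J \cdot 1\inv$.

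Uniqueness is then automatic from (i) and (ii): given any $H$, choose $g,J$ with $gP_Jg\inv \sim_f H$; property (ii) yields $\lambda(H) = \lambda(P_J)$, and property (i) pins down $\lambda(P_J) = [\WW_J]$, so the value of any map satisfying (i) and (ii) is forced to agree with the one just constructed. It remains to verify (ii) and (iii). For (ii), the backward implication is immediate from well-definedness, since $gHg\inv \sim_f K$ makes $H$ and $K$ commensurable with conjugates of a common parabolic; for the forward implication, $\lambda(H) = \lambda(K) = [\WW_J]$ gives $H \sim_f aP_Ja\inv$ and $K \sim_f bP_{J'}b\inv$ with $[\WW_J] = [\WW_{J'}]$, and Theorem~\ref{thm:building_commensurate} produces $c \in G$ with $cP_Jc\inv \sim_f P_{J'}$, so that chaining the three commensurabilities yields the required $g$. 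Property (iii) follows by running the same argument with the partial order in place of equality: unravelling the order on $\Lambda^{\infty}_{(\WW,S)}$, the strict relation $[\WW_J] < [\WW_{J'}]$ means $\WW_J$ is $\sim_f$-equivalent to an infinite-index subgroup of $\WW_{J'}$, and Theorem~\ref{thm:building_commensurate} transports this to a conjugate of $P_J$ being $\sim_f$ an infinite-index subgroup of $P_{J'}$, which translates back to the desired statement about $H$ and $K$.

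The genuinely hard part is entirely contained in Theorem~\ref{thm:building_commensurate}: establishing that commensurability and the conjugation action on parabolic subgroups of a group with a BN-pair are governed precisely by the $\sim_f$-classes of types. Granting that, the only delicate point in the present deduction is the book-keeping in (iii), namely ensuring that ``infinite index'' on the group side corresponds exactly to strictness of the inequality on the lattice side; this requires that the finite-versus-infinite index data be preserved under each of the commensurabilities relating $H$ and $K$ to their standard parabolic models.
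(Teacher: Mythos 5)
Your proposal is correct and follows essentially the same route as the paper, which derives this statement (as Corollary~\ref{cor:capmar:exact}) by combining the classification of open subgroups up to commensurated conjugacy from \cite[Theorem~3.3]{openKM} (Lemma~\ref{lemma:openKM}) with the BN-pair result Theorem~\ref{thm:building_commensurate}, where all the real work lies. Your well-definedness check, the uniqueness argument from (i)--(ii), and the index book-keeping for (iii) are exactly the (routine) deductions the paper leaves implicit.
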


The poset $\Lambda^{\infty}_{(\WW,S)}$ of standard parabolics modulo finite index can be identified in an obvious way with the poset of essential subsets of $S$ ordered by inclusion (see Lemma~\ref{lemma:prelim_parabolic} and Corollary~\ref{cor:parabolic_inclusion}). A subset $J \subseteq S$ is called \textbf{essential} if each irreducible component of the Coxeter system $(W_J, J)$ is non-spherical. In particular, given the classification of finite Coxeter groups, $\Lambda^{\infty}_{(\WW,S)}$ is a finite poset that can easily be constructed from the Coxeter diagram.

Now consider the poset $\mathrm{LN}(G)$.  Given $H \in \mathrm{LN}(G)$, for a sufficiently small compact open subgroup $U$ of $G$, then $O = HU$ is an open subgroup of $G$, which only depends on $U$ up to finite index; thus we have a map 
\[
\theta: \mathrm{LN}(G) \rightarrow \widetilde{\mc{O}}(G); \; H \mapsto [HU].
\]
On the other hand, if $O = HU$ for a compact open subgroup $U$ of $\N_G(H)$, then $\Res(O) \le H$, where $\Res(O)$ is the intersection of all open normal subgroups of $O$.  The group $\Res(O)$  is a closed normal subgroup of $O$, hence an element of $\mathrm{LN}(G)$. Moreover $\Res(O)$ is not sensitive to changes of finite index.  So we have another map
\[
\phi: \widetilde{\mc{O}}(G) \rightarrow  \mathrm{LN}(G); \; [O] \mapsto \Res(O),
\]
such that $\phi\theta(H) \unlhd H$ for all $H \in \mathrm{LN}(G)$.  The strongest large-scale connection between $\widetilde{\mc{O}}(G)$ and $\mathrm{LN}(G)$ that we can reasonably hope for in this context is that $\Res(O)$ is cocompact in $O$ for all $O \in \mc{O}(G)$, which is equivalent to asking that $\theta\phi = \mathrm{id}_{\widetilde{\mc{O}}(G)}$, or equivalently, $H/\phi\theta(H)$ is compact for all $H \in \mathrm{LN}(G)$.  This is exactly what happens for complete geometric Kac--Moody groups over finite fields.

\begin{theorem}\label{intro:KM_loc_norm}
Let $G$ be a complete geometric Kac--Moody group over a finite field and let $H$ be a noncompact closed locally normal subgroup of $G$.  Then there exist some $g\in G$, some essential subset $J\subseteq S$ and some spherical subset $J'\subseteq J^\perp$ such that
\[
\Res(P_{J \cup J'}) \le gHg\inv \le P_{J\cup J'}.
\]
Moreover, $\Res(P_{J \cup J'})$ is cocompact in $P_{J \cup J'}$.
\end{theorem}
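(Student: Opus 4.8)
The plan is to route $H$ through the maps $\theta$ and $\phi$, read off its parabolic type from Theorem~\ref{intro:KM_open}, and then sharpen the resulting two-sided estimate using the discrete residual. Since $H$ is locally normal I fix a compact open subgroup $U \le \N_G(H)$ and set $O = HU$, so that $O$ is open, $H \unlhd O$, and the relation $\phi\theta(H) \unlhd H$ recorded above reads $\Res(O) \le H \le O$. As $H$ is noncompact, so is $O$, and hence $\lambda(O) = [\WW_J]$ for a nonempty essential subset $J \subseteq S$ (every class of $\Lambda^{\infty}_{(\WW,S)}$ is represented by an essential subset, the compact classes being precisely $[\WW_\emptyset]$). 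Because $\lambda(O) = \lambda(P_J)$, Theorem~\ref{intro:KM_open}(ii) produces $g \in G$ with $gOg\inv \sim_f P_J$; after conjugating $H$ and $O$ and absorbing $g$ into the conclusion, I may assume $O \sim_f P_J$.

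For the lower bound I would invoke the finite-index invariance of $\Res$ recorded above: from $O \sim_f P_J$ it gives $\Res(O) = \Res(P_J)$ as subgroups of $G$. For any spherical $J' \subseteq J^\perp$ one has $\WW_{J \cup J'} = \WW_J \times \WW_{J'}$ with $\WW_{J'}$ finite, so $P_J \le P_{J \cup J'}$ with finite index, whence $\Res(P_{J \cup J'}) = \Res(P_J) = \Res(O) \le H$. Thus the left-hand inequality holds for \emph{every} such $J'$, and the only remaining task is to produce one $J'$ realising the upper bound $H \le P_{J \cup J'}$.

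The upper bound is the heart of the matter. Since $\Res(O) = \Res(P_J)$ is normal in $O$, I have $H \le O \le \N_G(\Res(O)) = \N_G(\Res(P_J))$; as the right-hand group contains $P_J \supseteq B$, it is a standard parabolic $P_{J''}$ with $J \subseteq J''$, and so $H \le P_{J''}$. I would then compute $\N_G(\Res(P_J))$ explicitly and show it equals $P_{J \cup J'}$ for a suitable spherical $J' \subseteq J^\perp$: the directions $r \in J^\perp$ commute with the $J$-part, so their Levi contributions normalise $\Res(P_J)$, whereas any non-spherical direction, or any spherical direction not orthogonal to $J$, genuinely moves $\Res(P_J)$. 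This identifies $J'' = J \cup J'$ with $J'$ spherical and orthogonal to $J$ (so that the essential part of $J''$ is exactly $J$) and delivers $H \le P_{J \cup J'}$, closing the sandwich.

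Both this normaliser computation and the ``moreover'' rest on a single explicit description, which I regard as the principal obstacle: the identification of $\Res(P_J)$ through the Levi decomposition $P_J = L_J \ltimes U_J$. One should show that $\Res(P_J)$ is assembled from the (compact) unipotent radical $U_J$ and the full residual of the Levi $L_J$ — which, for essential $J$, is a product of groups in $\ms S$ and hence equals $L_J$ modulo a compact piece — so that $P_J/\Res(P_J)$ is compact. This yields the cocompactness of $\Res(P_J)$ in $P_J$, hence in the finite-index overgroup $P_{J \cup J'}$, and since $\Res(P_{J \cup J'}) = \Res(P_J)$ this is exactly the final assertion (equivalently $\theta\phi = \mathrm{id}$). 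The delicate points will be the precise bookkeeping of the root groups $U_\alpha$ relative to the residue fixed by $P_J$ in the geometric completion, and the verification that it is exactly the spherical directions orthogonal to $J$ that normalise $\Res(P_J)$; everything else reduces to Theorem~\ref{intro:KM_open} and the finite-index invariance of $\Res$.
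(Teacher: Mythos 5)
Your lower bound is sound: with $O=HU$ one gets $\Res(O)\le H$, the discrete residual is insensitive to finite-index changes among open subgroups, and Corollary~\ref{cor:capmar:exact} gives $gOg\inv\sim_f P_J$ with $J$ nonempty essential, whence $\Res(P_{J\cup J'})=\Res(P_J)\le gHg\inv$ for every spherical $J'\subseteq J^\perp$. The genuine gap is your upper bound, and the failure is concrete: the claimed identity $\N_G(\Res(P_J))=P_{J\cup J'}$ with $J'$ spherical is \emph{false} whenever $J^\perp$ is non-spherical. Indeed, for each single $s\in J^\perp$ the set $\{s\}$ is spherical, so $P_{J\cup\{s\}}\sim_f P_J$ (Lemma~\ref{lem:parallel}(i)), hence $\Res(P_{J\cup\{s\}})=\Res(P_J)$ and $P_{J\cup\{s\}}\le \N_G(\Res(P_J))$; since the groups $P_{J\cup\{s\}}$, $s\in J^\perp$, all contain $B$ and together carry representatives of all of $J\cup J^\perp$, they generate $P_{J\cup J^\perp}$. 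So $\N_G(\Res(P_J))\ge P_{J\cup J^\perp}$ \emph{always} --- your heuristic that non-spherical directions ``genuinely move'' $\Res(P_J)$ is exactly wrong, because $\Res$ is a commensurability invariant and each singleton in $J^\perp$ changes $P_J$ only by finite index. (For an indecomposable example: take $I=\{1,\dots,5\}$ with $m_{12}=m_{34}=\infty$, vertices $1,2$ orthogonal to $3,4$, and vertex $5$ joined to $1$ and $3$; for $J=\{1,2\}$ one has $J^\perp=\{3,4\}$ non-spherical, and $\N_G(\Res(P_J))\supseteq P_{\{1,2,3,4\}}$, which by topological simplicity of $G^{(1)}$ cannot grow to $G$, so the normaliser is exactly $P_{J\cup J^\perp}$ with $J^\perp$ non-spherical.) Consequently your argument only yields $H\le P_{J\cup J^\perp}$, whose essential part strictly contains $J$ when $J^\perp$ is non-spherical; the residual of that larger parabolic strictly contains $\Res(P_J)$ (it contains $L^+_{(J^\perp)^\infty}$, which does not lie in $P_J$) and need not lie in $H$, so the sandwich does not close.

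This is precisely why the paper does not argue through $\N_G(\Res(P_J))$: in Lemma~\ref{lemma:loc_norm} it determines $J$ \emph{intrinsically from $H$}, by projecting $H$ to the Levi quotient of the parabolic $P_K$ virtually containing $\N_G(H)$, using the topological simplicity of the essential Levi factors and Goursat's lemma to show $L^+_J$ lies in $HZ^+_KU_K$ for a union $J$ of essential components of $K$, and only then invokes the structural Lemma~\ref{lemma:Jprime} --- not a normaliser formality --- to force $H\le P_{J\cup J'}$ with $J'$ spherical. Some input of this kind is unavoidable and is missing from your proposal. Separately, your ``moreover'' (cocompactness of $\Res(P_{J\cup J'})$ in $P_{J\cup J'}$), which you flag as the principal obstacle, remains an outline: the hard direction is the \emph{lower} bound on $\Res(P_J)$, and passing from the residual of the Levi quotient to $\Res(P_J)$ itself is not automatic (each open normal subgroup of $P_J$ is cocompact by your Levi argument, but an intersection of cocompact subgroups need not be cocompact without a further directed-family compactness argument). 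The paper secures this by exhibiting $L^+_JU_{J\cup J^\perp}$ inside the closure of the relative Tits core of a $J$-regular element (Theorem~\ref{thm:rTc_w_J}) and identifying $\ol{O^\dagger}$ with the kernel of the largest residually discrete quotient (Corollary~\ref{cor:open_Tc}); your sketch contains no substitute for this mechanism.
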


In fact we obtain a stronger result involving contraction groups and the Levi decomposition; see Theorem~\ref{thm:struct_ncpct_lnsbgr}.

Using the constraints we have on closed locally normal subgroups of $G$, we can apply the criterion \cite[Corollary~4.9]{Growingtrees} to conclude the following.

\begin{theorem}\label{intro:KM_loc_indec}
Let $G$ be a complete geometric Kac--Moody group over a finite field.  Suppose $G$ is one-ended and locally finitely generated.  Then $G$ is locally indecomposable.
\end{theorem}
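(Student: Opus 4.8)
The plan is to derive Theorem~\ref{intro:KM_loc_indec} from the criterion \cite[Corollary~4.9]{Growingtrees} by feeding it the fine structural information about closed locally normal subgroups supplied by Theorem~\ref{intro:KM_loc_norm}. Recall that being locally indecomposable means the decomposition lattice $\ldlat(G)$ is trivial, i.e.\ no open subgroup of $G$ splits as a nontrivial direct product. The overall strategy is to assume, for contradiction, that some open subgroup $O$ decomposes as a direct product $O = A \times B$ with $A, B$ both noncompact (if one factor were compact it would contribute the class $0$ and the decomposition would be trivial), and then show that the one-endedness hypothesis is incompatible with the shape that such direct factors are forced to take.

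First I would observe that the two direct factors $A$ and $B$ are each closed locally normal subgroups of $G$: they are normalised by the open subgroup $O$, hence their normalisers are open, and they are closed. Since they are noncompact, Theorem~\ref{intro:KM_loc_norm} (or more precisely its refinement Theorem~\ref{thm:struct_ncpct_lnsbgr}) applies to each, pinning down each factor up to conjugacy and finite adjustment between a residual $\Res(P_{J \cup J'})$ and the parabolic $P_{J \cup J'}$ for an essential $J$ and a spherical $J' \subseteq J^\perp$. The key point to extract is that a noncompact closed locally normal subgroup is, up to cocompactness, controlled by the residual of a standard parabolic attached to an \emph{essential} subset of $S$; in particular its large-scale geometry is governed by the non-spherical part of the Coxeter diagram. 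The two commuting factors $A$ and $B$ must then correspond to data $(J_A, J_A')$ and $(J_B, J_B')$ that are compatible with $A$ and $B$ centralising each other, which forces a separation of the essential support of the diagram into two commuting pieces.

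Next I would translate this diagram-level separation into the input required by the ends criterion of \cite{Growingtrees}. The criterion there links a direct product decomposition of an open subgroup to a splitting of the group's large-scale geometry that produces more than one end; the hypotheses \emph{one-ended} and \emph{locally finitely generated} are precisely the regularity conditions under which \cite[Corollary~4.9]{Growingtrees} converts a nontrivial element of $\ldlat(G)$ into a contradiction with one-endedness. Concretely, I expect that the commuting pair $A, B$ obtained above yields, via the residual/parabolic description, two commuting noncompact subgroups whose product is cocompact in an open subgroup, and that local finite generation guarantees the ambient group is a finitely generated compactly presented object to which the growth-of-trees machinery applies; feeding this configuration into \cite[Corollary~4.9]{Growingtrees} then forces $G$ to have more than one end, contradicting the hypothesis. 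Hence no such nontrivial decomposition exists, and $\ldlat(G) = \{0, \infty\}$.

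The main obstacle I anticipate is verifying that the abstract hypotheses of \cite[Corollary~4.9]{Growingtrees} are genuinely met by the configuration coming out of Theorem~\ref{intro:KM_loc_norm}, rather than the end-counting itself. In particular, the delicate step is to check that the commuting decomposition survives passage to the residual subgroups and remains a \emph{nondegenerate} direct product after the cocompactness reductions --- one must rule out the possibility that one of the putative factors becomes compact (hence trivial in $\ldlat(G)$) after intersecting with $\Res(P_{J \cup J'})$. This requires the full strength of the cocompactness clause ``$\Res(P_{J \cup J'})$ is cocompact in $P_{J\cup J'}$'' from Theorem~\ref{intro:KM_loc_norm}, together with a careful bookkeeping of how the essential subsets $J_A, J_B$ and their perpendicular spherical parts interact, to ensure that both factors retain noncompact residual cores. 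Once that nondegeneracy is secured, the application of the criterion is essentially formal.
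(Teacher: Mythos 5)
There is a genuine gap, and it sits at the very first step. You reduce to the case of a splitting $O = A \times B$ with \emph{both} factors noncompact, asserting that a compact factor ``would contribute the class $0$''. That is false: the class $0$ consists of \emph{discrete} (locally trivial) subgroups, and a compact infinite direct factor is a nontrivial element of $\ldlat(G)$. In fact the hard case of local indecomposability is precisely a splitting of a \emph{compact} open subgroup, say $U^+ = K \times L$ with both factors infinite profinite; there Theorem~\ref{intro:KM_loc_norm} (and Theorem~\ref{thm:struct_ncpct_lnsbgr}) give no information whatsoever, since they only constrain \emph{noncompact} closed locally normal subgroups. Your proposal never engages with this case, so the argument collapses exactly where the real difficulty lies. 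Ruling out such compact splittings is what the machinery of \cite{Growingtrees} is for, and it cannot be reconstructed by citing its conclusion.

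Relatedly, you invert the logical structure of \cite[Corollary~4.9]{Growingtrees} (Theorem~\ref{thm:cpctend:bis}): it is not a device that converts a putative splitting into a second end; rather, one-endedness is one of six \emph{hypotheses}, and the conclusion is local indecomposability outright, with no contradiction argument left to you. The actual work, which your sketch omits, is verifying the remaining hypotheses for $G^{(1)}$ (to which one reduces by Lemma~\ref{lem:loc_indecomp}): $\QZ(G^{(1)})=\triv$ and absence of compact normal subgroups come from topological simplicity (Proposition~\ref{prop:KM_simple}, Lemma~\ref{lem:A_semisimple}); local finite quotient type comes from local finite generation via Lemma~\ref{lem:fqt_local}, whose role is this and not ``compact presentability''; the absence of infinite discrete quotients of open subgroups is Corollary~\ref{cor:open_Tc}, i.e.\ the cocompactness of $\Res(P_{J\cup J'}) = \ol{(P_{J\cup J'})^\dagger}$ --- the ``moreover'' clause of Theorem~\ref{intro:KM_loc_norm} that you invoke for a different purpose; and, crucially, the compactness of centralisers of nontrivial closed locally normal subgroups (Corollary~\ref{cor:Ristcpct}). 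This last hypothesis is where Theorem~\ref{thm:struct_ncpct_lnsbgr} genuinely enters, and it is the correct home for your ``commuting pieces of the diagram'' intuition: if a nontrivial closed locally normal $Z$ centralises $U_{J\cup J^\perp}$, then Lemma~\ref{lemma:loc_norm} forces $U_{J_1\cup J_1^\perp} \le Z \cap U_{J\cup J^\perp} = \triv$, whence $J_1 \cup J_1^\perp = I$, contradicting indecomposability of the generalised Cartan matrix --- not one-endedness. So while you correctly identified the two main ingredients, the way you propose to combine them does not yield a proof.
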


We see that $G$ is one-ended if and only if its Weyl group is one-ended, and the latter has a straightforward characterisation due to M. Davis; see Section~\ref{subsec:ends} for details.

For the question of whether $G$ is locally finitely generated, the answer is known to be positive in many but not all cases.  The following corollary employs some known sufficient conditions.

\begin{corollary}\label{corintro:localindecKM}
Let $G$ be a complete geometric Kac--Moody group over a finite field of order $q$ and characteristic $p$, with indecomposable generalised Cartan matrix $A = (a_{ij})$ and Weyl group $\WW_A$, and write $M_A := \max_{i \neq j}|a_{ij}|$. Suppose that at least one of the following conditions holds:
\begin{enumerate}[(i)]
\item $\WW_A$ is one-ended and $p > M_A$. 
\item $A$ is $2$-spherical.  Moreover, $q \ge 3$ if $M_A = 2$ and $q \ge 4$ if $M_A = 3$.  
\end{enumerate}
Then $G$ is locally indecomposable.
\end{corollary}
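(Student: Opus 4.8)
The plan is to derive Corollary~\ref{corintro:localindecKM} from Theorem~\ref{intro:KM_loc_indec}, so the entire task reduces to verifying its two hypotheses under each of the sufficient conditions (i) and (ii): namely, that $G$ is one-ended and that $G$ is locally finitely generated. The one-endedness is purely a Weyl-group matter, as signalled by the remark following Theorem~\ref{intro:KM_loc_indec}: $G$ is one-ended if and only if $\WW_A$ is one-ended. Thus, for case~(i), one-endedness is assumed outright. For case~(ii), I would invoke the assumption that $A$ is $2$-spherical together with the stated bound $M_A \le 3$; here the task is to recall (from Davis's characterisation, cited in Section~\ref{subsec:ends}) that a $2$-spherical indecomposable Coxeter system with the given edge labels is one-ended --- essentially because a $2$-spherical irreducible infinite Coxeter group has no essential spherical ``cut'' structure that would produce more than one end, and the Davis criterion rules out the zero-, two-, and infinitely-many-ended cases for these diagrams.

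The genuine content of both cases lies in establishing \emph{local finite generation}, and this is where I would deploy the cited sufficient conditions. The plan is to quote the known results on local finite generation of complete geometric Kac--Moody groups (from \cite{openKM} or its references): local finite generation is governed by whether a compact open subgroup --- concretely, the positive root group datum / the group generated by root subgroups in a bounded region --- is topologically finitely generated. For case~(i), the hypothesis $p > M_A$ is precisely the arithmetic condition under which the relevant pro-$p$ unipotent pieces are finitely generated as topological groups; I would cite the theorem guaranteeing local finite generation whenever $p$ strictly exceeds the maximal off-diagonal entry of the generalised Cartan matrix. For case~(ii), the combination of $2$-sphericity with the field-size bounds ($q \ge 3$ when $M_A = 2$, $q \ge 4$ when $M_A = 3$) is exactly the regime covered by the Abramenko--Mühlherr type finite presentation / generation results for $2$-spherical Kac--Moody groups over sufficiently large finite fields; these field-size thresholds ensure the rank-two subgroups are (topologically) finitely generated, which propagates to local finite generation of $G$.

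Assembling the argument, I would structure the proof as a short case analysis. First I would record that under either (i) or (ii), the Weyl group $\WW_A$ is one-ended, hence $G$ is one-ended by the criterion of Section~\ref{subsec:ends}. Next I would record that under either (i) or (ii), $G$ is locally finitely generated, citing the appropriate result in each case. With both hypotheses of Theorem~\ref{intro:KM_loc_indec} verified, local indecomposability of $G$ follows immediately by direct application of that theorem. The main obstacle is not conceptual but bibliographic and definitional: one must correctly match each arithmetic hypothesis ($p > M_A$ in~(i); the $q$-thresholds in~(ii)) to the precise published statement that yields local finite generation in that regime, and confirm that the $2$-spherical hypothesis in~(ii) indeed forces one-endedness of $\WW_A$ through Davis's classification rather than requiring it as a separate assumption. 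Beyond locating and citing these inputs accurately, no substantial new argument is required.
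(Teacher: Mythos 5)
Your route is exactly the paper's: both cases are fed into Theorem~\ref{intro:KM_loc_indec} (in the body, Theorem~\ref{thm:LDG}), with local finite generation supplied in case~(i) by the $p > M_A$ criterion and in case~(ii) by the Abramenko--M\"uhlherr result with precisely those $q$-thresholds --- these are packaged in the paper as Lemma~\ref{lemma:loc_fintype}(1) and (2), citing \cite{RCap}, \cite{Rousseau} and \cite{AbrM97} respectively --- and with one-endedness of $G$ translated into one-endedness of $\WW_A$ via Davis (Proposition~\ref{prop:one-ended}). Your heuristic for why $2$-sphericity yields one-endedness is made precise in the paper through the graph $\Gamma_f(A)$: when $A$ is $2$-spherical and indecomposable, $\Gamma_f(A)$ is a complete graph, hence strongly connected, which is exactly Davis's nerve condition.

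There is, however, one concrete gap in case~(ii): the hypotheses there do not exclude $A$ of \emph{finite} type (e.g.\ the Cartan matrix of type $A_2$ is indecomposable and $2$-spherical), in which case $\WW_A$ is finite, hence zero-ended rather than one-ended. Your claimed deduction ``$2$-spherical indecomposable $\Rightarrow$ one-ended via Davis'' fails in this subcase --- note also that Proposition~\ref{prop:one-ended} explicitly assumes $\WW_A$ infinite --- so Theorem~\ref{intro:KM_loc_indec} cannot be invoked. The corollary still holds there, but for a degenerate reason: when $A$ is of finite type the building is finite, so $G^{(1)}$ is finite and $G$ is discrete, hence trivially locally indecomposable (using Lemma~\ref{lem:loc_indecomp} if one argues via $G^{(1)}$). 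The paper disposes of this subcase in one line at the start of the proof of Theorem~\ref{thm:LDG}; your write-up needs the same reduction to non-finite type before appealing to one-endedness in case~(ii). With that single addition, your argument coincides with the paper's.
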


\subsection{Structure of the article}

The first sections contain the relevant preliminaries on \tdlc groups (Section~\ref{sec:prelim}) and BN-pairs and Kac--Moody groups (Section~\ref{sec:prelim_KM}). In Section~\ref{sec:conjugacy_parabolic} we prove a result about conjugacy classes of parabolic subgroups of a group with a BN-pair, which is used to prove Theorem~\ref{intro:KM_open}.  In Section~\ref{sec:mainKM}, we focus on complete geometric Kac--Moody groups over finite fields and describe their locally normal subgroups, number of ends, and the relationship between them, using the results of the previous sections where relevant.

\section{Preliminaries on \tdlc groups}\label{sec:prelim}
We start by introducing the terminology required to state the local indecomposability criterion \cite[Corollary~4.9]{Growingtrees}, and recall some properties of contraction groups that will be used later.

\subsection{Locally indecomposable groups}

Let $G$ be a \tdlc group.  A subgroup of $G$ is called \defbold{locally normal} if its normaliser is open.  A closed subgroup $K$ of $G$ is a \defbold{local direct factor} if it is a direct factor of some open subgroup $O$ of $G$, that is, $O$ splits, as a topological group, as a direct product $O = K \times L$.  Note that every local direct factor is locally normal.  The group $G$ is \defbold{locally indecomposable} if every local direct factor of $G$ is discrete or open.

Being locally indecomposable is a local property:
\begin{lemma}\label{lem:loc_indecomp}
Let $G$ be a \tdlc group and let $U$ be an open subgroup of $G$.  Then $G$ is locally indecomposable if and only if $U$ is locally indecomposable.
\end{lemma}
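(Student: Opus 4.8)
The plan is to exploit that the two relevant notions, being a local direct factor and being discrete or open, are genuinely local: they can be detected on arbitrarily small compact open subgroups. The basic bookkeeping is that, since $U$ is open in $G$, the open subgroups of $U$ are exactly the open subgroups of $G$ contained in $U$, and a subgroup of $U$ is discrete (resp. open) in $U$ if and only if it is discrete (resp. open) in $G$. With this, the forward implication is immediate: if $K$ is a local direct factor of $U$, witnessed by an open subgroup $O \leq U$ with $O = K \times L$, then $O$ is also open in $G$, so $K$ is a local direct factor of $G$; local indecomposability of $G$ then forces $K$ to be discrete or open, whence the same holds in $U$.

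For the converse I would begin with a local direct factor $K$ of $G$, say $O = K \times L$ with $O$ open in $G$, and aim to show that $K$ is discrete or open. The key step is to push the decomposition into $U$. Since $O \cap U$ is an open subgroup of the topological direct product $O = K \times L$, it contains a basic open neighbourhood of the identity, and hence, using that the closed subgroups $K$ and $L$ are themselves \tdlc, a compact open subgroup of the form $A \times B$ with $A$ open in $K$ and $B$ open in $L$. As $A \times B$ is an open subgroup of $U$ in which $A$ is a direct factor, the subgroup $A$ is a local direct factor of $U$. Applying local indecomposability of $U$, we get that $A$ is discrete or open. If $A$ is open, then since $A \leq K$ the subgroup $K$ contains an open set and is therefore open in $G$; if $A$ is discrete, then $A$ is a finite open subgroup of $K$ (being simultaneously compact and discrete), which forces $K$ to be discrete. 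In either case $K$ is discrete or open, as needed.

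The hard part is the descent step, that is, producing from the ambient splitting $O = K \times L$ a compact open subgroup of $U$ that still splits compatibly along $K$ and $L$. The crux is the elementary but indispensable fact that in a topological direct product the products $A \times B$ of compact open subgroups of the two factors form a neighbourhood basis of the identity; this guarantees both that such a small box can be found inside $O \cap U$ and that its factor $A$ sits \emph{openly} in $K$, so that the discreteness or openness of $A$ transfers back to $K$. Everything else is routine manipulation of open subgroups.
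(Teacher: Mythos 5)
Your proof is correct and takes essentially the same route as the paper's: both directions rest on pushing a splitting $O = K \times L$ of an open subgroup of $G$ down into $U$ via a product of open subgroups of the two factors, together with the observation that discreteness and openness are insensitive to passing between $U$ and $G$. The paper works with the box $(K\cap U)\times(L\cap U)$ directly (phrasing both implications contrapositively), while your compact open box $A\times B$ obtained from van Dantzig is a harmless variant of the same idea.
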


\begin{proof}
We prove the contrapositive in both directions: that is, $G$ has a local direct factor that is neither discrete nor open if and only if $U$ has such a local direct factor.  Indeed, if $G$ has an open subgroup $O = K \times L$ where $K$ is neither discrete nor open, then since $O$ carries the product topology, the group $(K \cap U) \times (L \cap U)$ is open in $U$; we then see that $K \cap U$ is a local direct factor of $U$ that is neither discrete nor open.  Conversely if $K$ is a local direct factor of $U$ that is neither discrete nor open, then $K$ is also a local direct factor of $G$.
\end{proof}

\subsection{Local finiteness properties}

\begin{definition}
A profinite group is \defbold{topologically finitely generated} if it has a finitely generated dense subgroup. A \tdlc group $G$ is \defbold{locally finitely generated} if every compact open subgroup of $G$ is topologically finitely generated.

A profinite group $U$ is of \defbold{finite quotient type} if for each natural number $n$, there are only finitely many open normal subgroups of $U$ of index $n$.  A \tdlc group $G$ is \defbold{locally of finite quotient type} if every compact open subgroup of $G$ is of finite quotient type.  
\end{definition}

The two notions are strongly related; moreover, to determine if $G$ is locally finitely generated, it suffices to consider a single compact open subgroup.

\begin{lemma}\label{lem:fqt_local}
Let $G$ be a \tdlc group, and let $U$ be a compact open subgroup.
\begin{enumerate}
\item 
If $U$ is topologically finitely generated, then it is of finite quotient type, and every compact open subgroup $V$ of $G$ is topologically finitely generated.
\item
If $U$ is of finite quotient type and pro-$p$ for some prime $p$, then it is topologically finitely generated.
\end{enumerate}
\end{lemma}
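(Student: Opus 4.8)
The plan is to treat the two parts separately using standard profinite group theory; throughout I use that in a compact group every open subgroup has finite index. For part (1), suppose $U$ is topologically generated by a finite set $X$ with $|X|=d$, and let $\Gamma=\langle X\rangle$ be the dense abstract subgroup it generates. To obtain finite quotient type I would run the usual action argument: an open subgroup of index $n$ is a point stabiliser for a transitive continuous action of $U$ on $n$ points, hence arises from a continuous homomorphism $U\to S_n$. Since the target is finite (so discrete and Hausdorff), such a homomorphism is determined by its restriction to the dense subgroup $\Gamma$, and thus by the images of $X$; there are therefore at most $(n!)^d$ of them and at most $n\cdot(n!)^d$ open subgroups of index $n$. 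In particular there are only finitely many open \emph{normal} subgroups of each index, which is precisely finite quotient type.

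For the second assertion of (1) I would compare an arbitrary compact open subgroup $V$ with $U$ through $W:=U\cap V$, which is open, hence of finite index, in both $U$ and $V$. The transfer uses two elementary moves. First, $W$ is topologically finitely generated: since $\Gamma$ is dense in $U$ and $W$ is open, $\Gamma\cap W$ is dense in $W$, and it has index at most $[U:W]$ in $\Gamma$, so by the Nielsen--Schreier theorem $\Gamma\cap W$ is finitely generated and its generators topologically generate $W$. Second, a profinite group admitting a topologically finitely generated open subgroup of finite index is itself topologically finitely generated: adjoining a finite transversal of $W$ in $V$ to a finite dense generating set of $W$ yields a finite set whose generated closed subgroup contains both $W$ and the transversal, hence all of $V$. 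Applying these moves in turn passes topological finite generation from $U$ to $W$ to $V$.

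For part (2), assume $U$ is pro-$p$ and of finite quotient type, and invoke Frattini theory. The Frattini subgroup $\Phi(U)=\overline{U^{p}[U,U]}$ is closed, the quotient $U/\Phi(U)$ is an elementary abelian pro-$p$ group, i.e. a topological $\bF_p$-vector space, and by the Burnside basis theorem for pro-$p$ groups $U$ is topologically finitely generated if and only if $\dim_{\bF_p}U/\Phi(U)$ is finite. Now every open subgroup of $U$ of index $p$ is maximal, hence normal and containing $\Phi(U)$, and such subgroups correspond bijectively to the codimension-one closed subspaces of $U/\Phi(U)$. Were $U$ not topologically finitely generated, this vector space would be infinite-dimensional, its continuous dual (whose elements factor through finite quotients) would be infinite, and so there would be infinitely many open normal subgroups of index exactly $p$, contradicting finite quotient type. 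Hence $U$ is topologically finitely generated.

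The homomorphism count in (1) and the Frattini computation in (2) are routine; the step demanding the most care is the two-way transfer in (1), where one must verify that density of $\Gamma$ is inherited by $\Gamma\cap W$ and that adjoining a finite transversal really closes up to all of $V$. The pro-$p$ hypothesis in (2) is indispensable and enters exactly where index-$p$ open subgroups are identified with the hyperplanes of the Frattini quotient; for general profinite $U$, finite quotient type does not force topological finite generation.
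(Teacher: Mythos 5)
Your argument is correct, and it follows the same two-step strategy as the paper's proof, with two points of repackaging worth noting. For the first half of (1), the paper intersects open subgroups of $U$ with the dense finitely generated subgroup $F$, using $uH = \ol{F \cap uH}$ to see that $H \mapsto F \cap H$ is injective on index-$n$ open subgroups, and then invokes the fact that a finitely generated group has only finitely many subgroups of index $n$; you instead inline the standard proof of that very fact by counting continuous homomorphisms $U \to S_n$ through their values on the finite generating set --- the same mechanism made self-contained (one routine point you leave implicit: the coset action is continuous because the normal core of a finite-index open subgroup is a finite intersection of open subgroups, hence open). Your transfer to an arbitrary compact open $V$ is identical to the paper's: pass to $W = U \cap V$, bound the generation of $\Gamma \cap W$ by the Schreier index formula (your attribution to ``Nielsen--Schreier'' is a cosmetic slip; the quantitative statement is Schreier's index formula, which is what the paper cites), and adjoin a finite transversal of $W$ in $V$. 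For (2), the paper quotes Ribes--Zalesskii twice: maximal closed subgroups of a pro-$p$ group have index $p$, so finite quotient type forces $\Phi(U)$ to be open, and openness of $\Phi(U)$ gives topological finite generation; you instead argue the contrapositive by identifying open normal subgroups of index $p$ with hyperplanes of the elementary abelian quotient $U/\Phi(U)$, viewed as a topological $\bF_p$-vector space, and observing that its continuous dual is infinite when the quotient is infinite. This is the same Frattini mechanism with the citations unwound; your version buys self-containedness (including making visible exactly where the pro-$p$ hypothesis enters), while the paper's is shorter by delegating to the literature.
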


\begin{proof}
(1)
Suppose $U = \ol{F}$ where $F$ is a finitely generated subgroup, and let $d(F)$ be the number of generators of $F$.  Note that for every coset $uH$ of an open subgroup $H$ of $U$, we have $uH = \ol{F \cap uH}$.  Consequently, given $n \in \bN$ and open subgroups $H_1 \neq H_2$ of $U$ of index $n$, we see that $F \cap H_1$ and $F \cap H_2$ are distinct subgroups of $F$ of index $n$.  Since $F$ is finitely generated, it only has finitely many subgroups of index $n$.  Thus there are only finitely many open subgroups of $U$ of index $n$, so $U$ is of finite quotient type.

Given a compact open subgroup $V$ of $G$, write $W = U \cap V$.  We then have $|F:F \cap W| = |U:W| < \infty$, and then the Schreier index formula yields 
\[
d(F \cap W) \le 1 + |U:W|(d(F)-1) < \infty.
\]
In turn, there is a dense subgroup of $V$ generated by $F \cap W$ and $X$, where $X$ is a set of coset representatives for $W$ in $V$, so $|X| = |V:W| < \infty$.  Thus $V$ is topologically finitely generated.  

(2)
Suppose now that $U$ is of finite quotient type and pro-$p$. By \cite[Lemma 2.8.7(a)]{RZ10}, every maximal closed subgroup of $U$ has index $p$; there are thus only finitely many maximal closed subgroups of $U$, so the Frattini subgroup $\Phi(U)$ has finite index.  By \cite[Proposition 2.8.10]{RZ10} it follows that $U$ is topologically finitely generated. 
\end{proof}

\subsection{One-endedness}
Let $X$ be a metric space. A \defbold{geodesic segment} (resp. \defbold{ray}, \defbold{line}) in $X$ is an isometry $r\co I\to X$, where $I$ is a closed interval of $\bR$ (resp. $I=[0,\infty)$, $I=\bR$) and $\bR$ is equipped with the usual Euclidean metric. The space $X$ is \defbold{geodesic} if any two points of $X$ are connected by (the image of) a geodesic segment. It is {\bf proper} if every closed ball in $X$ is compact.

A geodesic ray $r$ is \defbold{proper} if ``$r(t)$ goes to infinity as $t\to\infty$'', that is, for any compact $C\subseteq X$ there is some $N\in\bN$ such that $r([N,\infty))\subseteq X\setminus C$. The space $X$ is \defbold{one-ended} if for any two proper rays $r_1,r_2\co[0,\infty)\to X$ and any compact subset $C\subseteq X$, there is some $N\in\bN$ such that $r_1([N,\infty))$ and $r_2([N,\infty))$ are contained in the same path component of $X\setminus C$.

Let $G$ be a compactly generated \tdlc group. Then $G$ acts {\bf geometrically} (that is, properly and cocompactly) by isometries on a proper geodesic metric space $X$ (see e.g. \cite[\S2.2]{Growingtrees} for more details and definitions). Moreover, any two such $X$'s are quasi-isometric. Since one-endedness is a quasi-isometric invariant of the space, it then makes sense to call $G$ \defbold{one-ended} if it acts geometrically by isometries on a one-ended proper geodesic metric space $X$.

\subsection{The local indecomposability criterion}

The \defbold{quasi-centre} $\QZ(G)$ of a \tdlc group $G$ consists of all elements of $G$ with open centraliser. 

Here is the announced criterion.

\begin{theorem}[{\cite[Corollary~4.9]{Growingtrees}}]\label{thm:cpctend:bis}
Let $G$ be a nontrivial compactly generated \tdlc group.  Suppose the following: $G$ is one-ended; $\QZ(G)=\triv$; $G$ has no nontrivial compact normal subgroups; $G$ is locally of finite quotient type; no open subgroup of $G$ has an infinite discrete quotient; and the centraliser of every nontrivial closed locally normal subgroup is compact.  Then $G$ is locally indecomposable.
\end{theorem}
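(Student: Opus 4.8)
The plan is to argue by contraposition: assuming $G$ is not locally indecomposable, I will manufacture a geometric action of $G$ on an infinite tree, and then invoke the quasi-isometry invariance of the number of ends, together with the fact that no infinite group is quasi-isometric to a ray, to conclude that $G$ has at least two ends, contradicting one-endedness. (Note that one-endedness already forces $G$ to be noncompact, since a compact group acts geometrically on a point, which has no ends.)

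So suppose $G$ has a local direct factor $K$ that is neither discrete nor open, and fix an open subgroup $O$ with $O = K \times L$. First I would pin down the shape of such a factor using the centraliser hypothesis. Since $K$ is non-discrete it is infinite, hence a nontrivial closed locally normal subgroup, so $\CC_G(K)$ is compact; as $L$ centralises $K$ inside the direct product and $L$ is closed, $L$ is compact. Symmetrically, $L$ is nontrivial (otherwise $K = O$ would be open), so $\CC_G(L)$ is compact and contains $K$, whence $K$ is compact. Therefore $U := O = K \times L$ is a compact open subgroup of $G$ that splits as a direct product of two infinite profinite groups, with $K$ and $L$ each normal in $U$.

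The heart of the argument is to convert this decomposition of a compact open subgroup into a nontrivial action of $G$ on a tree; this is exactly the \emph{growing trees} mechanism. The rough idea is that, starting from $U$ and the canonical factor $K$, one organises the $G$-translates of the subgroups attached to the splitting into a graph $T$ whose adjacency records the commensurability and containment relations between them. The direct-product structure of $U$ is what guarantees that $T$ has no cycles, i.e.\ is a tree, and $G$ acts on $T$ by permuting translates. Here the remaining hypotheses enter to make the action geometric and nondegenerate: compact generation yields cocompactness (finitely many orbits of edges); compactness of $K$ and $L$ together with the assumption that no open subgroup of $G$ has an infinite discrete quotient forces the stabilisers to be compact open, hence the action proper, by preventing the emergence of an infinite discrete stabiliser; local finite quotient type controls the local finiteness of $T$; and $\QZ(G)=\triv$ together with the absence of a nontrivial compact normal subgroup rules out the degenerate outcomes in which the action fixes a vertex or an end, since those would produce either a nontrivial compact normal subgroup or a nontrivial element of the quasi-centre. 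Thus $T$ is infinite and the action has no global fixed point.

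With a geometric action of the noncompact group $G$ on an infinite tree $T$ in hand, $T$ is a proper geodesic model for $G$ up to quasi-isometry. An infinite tree has either two ends (when quasi-isometric to a line) or infinitely many ends, and in either case more than one; since no group is quasi-isometric to a ray, $T$ is not one-ended, so $G$ is not one-ended, the desired contradiction. The main obstacle is unquestionably the middle step: producing the tree and simultaneously verifying that the action is proper, cocompact, and essential. The reduction to compact factors and the ends bookkeeping are routine, but extracting a genuine tree (rather than a more general graph) from the bare datum of a product decomposition of a single compact open subgroup is delicate, and is precisely the technical content packaged in \cite{Growingtrees}; making the roles of local finite quotient type and of the no-infinite-discrete-quotient hypothesis precise in securing properness and cocompactness is where the real work lies.
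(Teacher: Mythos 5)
There is a genuine gap, and it is exactly where you place it yourself. The statement you were asked to prove \emph{is} \cite[Corollary~4.9]{Growingtrees}: the paper under review quotes it with a citation and contains no internal proof, so the only way for a blind attempt to succeed is to actually supply the argument. Your middle section does not do this. You never define the graph $T$ (what its vertices are, what adjacency means), never prove it is a tree, and never verify properness, cocompactness or essentiality of the action; instead you offer a list of plausible-sounding assignments of hypotheses to roles (``the direct-product structure \dots{} guarantees that $T$ has no cycles'', ``local finite quotient type controls the local finiteness of $T$'', ``the no-infinite-discrete-quotient hypothesis \dots{} forces the stabilisers to be compact open'') and then concede that this ``is precisely the technical content packaged in \cite{Growingtrees}''. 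Deferring the heart of the proof to the very reference whose corollary is being proved is circular, and the hypothesis-to-role attributions are unverifiable guesses rather than checkable steps. The missing content is the content of the theorem.

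What you did prove is the opening reduction, and it is correct and genuinely uses the hypotheses: if $O = K \times L$ with $K$ neither discrete nor open, then $K$ is a nontrivial closed locally normal subgroup, so $\CC_G(K)$ is compact and hence $L$ is compact; $L$ is nontrivial (else $K = O$ is open), so symmetrically $K$ is compact. One small unjustified assertion: you claim both factors are \emph{infinite}, but for $L$ you only established nontriviality; the fix is to note that if $L$ were finite then $K$ would be closed of finite index in $O$, hence open, a contradiction. Your endgame is also fine modulo a standard fact, though it needs more care than ``no group is quasi-isometric to a ray'': what is required is that an unbounded locally finite tree admitting a cocompact isometric action cannot be one-ended (a cocompact group fixes the unique end and coarsely preserves horospherical levels, contradicting coarse density of an orbit), after which such a tree has $2$ or infinitely many ends and one-endedness of $G$ is violated. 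So the frame of your contrapositive is sound, but the load-bearing step --- growing the tree from the product decomposition and making the action geometric and essential --- is absent.
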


\subsection{Contraction groups}

Given a topological group $G$ and an element $g\in G$, we define the \defbold{contraction group}
$$\con_G(g):=\{x\in G \ | \ g^nxg^{-n}\stackrel{n\to\infty}{\to}1\}.$$
The contraction group is invariant under replacing $g$ with a positive power (\cite[Lemma~2.8]{ReidFlat}).  We will drop the subscript $G$ when the ambient group is clear from context.

Note that if $K$ is a compact subset of $G$ contained in $\con_G(g)$, then $K$ is uniformly contracted by $g$: if $U$ is an identity neighbourhood, a compactness argument shows that $g^nKg^{-n} \subseteq U$ for all but finitely many $n \ge 0$.

The \defbold{Tits core} of a \tdlc group $G$ is its normal subgroup
$$ G^{\dagger} = \la \ol{\con(g)} \mid g \in G \ra.$$ 
Given a subset $H$ of $G$, the \defbold{relative Tits core} is
$$ G^{\dagger}_H = \la \ol{\con_G(h)} \mid h \in H \cup H^{\inv} \ra;$$
note that if $H$ is an open subgroup of $G$, then $\con_G(h) = \con_H(h)$ for all $h \in H$, so $G^\dagger_H = H^\dagger$.
For $g \in G$ we define
$$ G^{\dagger}_g := G^{\dagger}_{\la g \ra} =  \la \ol{\con_G(g)}, \ol{\con_G(g\inv)} \ra.$$

We recall some facts about relative Tits cores that will be useful later.

\begin{lemma}[{See \cite[Theorem~1.2(i)]{ReidFlat}}]\label{lemma:dagger_ln}
Let $G$ be a \tdlc group and let $g \in G$.  Then $G^{\dagger}_g$ is locally normal in $G$.
\end{lemma}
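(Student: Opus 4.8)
The plan is to exhibit a compact open subgroup of $G$ contained in the normaliser $\N_G(G^\dagger_g)$; since $\N_G(G^\dagger_g)$ is a subgroup of $G$, this is precisely what is needed for $G^\dagger_g$ to be locally normal. The starting point is Willis's theory of tidy subgroups, applied to the automorphism $\alpha\co x\mapsto gxg\inv$. First I would fix a compact open subgroup $U$ that is tidy for $\alpha$ and recall the associated decomposition $U = U_+U_-$, where $U_+ = \bigcap_{n\ge 0}g^nUg^{-n}$ and $U_- = \bigcap_{n\ge 0}g^{-n}Ug^n$ are the subgroups on which $g$ acts expandingly and contractingly, and $U_0 = U_+\cap U_-$ is a compact, $g$-invariant subgroup.

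The first genuine step is to check that $U_0$ normalises $\con(g)$ and $\con(g\inv)$. This is a soft compactness argument: given $u\in U_0$ and $x\in\con(g)$, one has $g^n(uxu\inv)g^{-n} = (g^nug^{-n})(g^nxg^{-n})(g^nu\inv g^{-n})$, where $g^nug^{-n}$ ranges in the compact set $U_0$ (using $g$-invariance) while $g^nxg^{-n}\to 1$; uniform continuity of conjugation by a compact set of elements then forces $g^n(uxu\inv)g^{-n}\to 1$, so $uxu\inv\in\con(g)$. The same argument, using that $U_0$ is also $g\inv$-invariant, shows $U_0$ normalises $\con(g\inv)$, hence also $\ol{\con(g)}$, $\ol{\con(g\inv)}$ and therefore $G^\dagger_g$. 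Thus $U_0\subseteq\N_G(G^\dagger_g)$, and of course $G^\dagger_g\subseteq\N_G(G^\dagger_g)$ as well.

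The key input, and the step I expect to be the main obstacle, is the Baumgartner--Willis description of the contracting directions: modulo the compact $g$-fixed part $U_0$, the subgroup $U_-$ lies inside the closed contraction group of $g$, and symmetrically for $U_+$. Concretely I would invoke the factorisations $U_-\subseteq\ol{\con(g)}\,U_0$ and $U_+\subseteq\ol{\con(g\inv)}\,U_0$, which express exactly that $U_\pm$ is, up to the $g$-fixed compact part, a piece of the appropriate contraction group. This is the substantive content drawn from the structure theory of tidy subgroups, and verifying it (rather than the surrounding bookkeeping) is where the real work lies; the earlier reduction that $\con_G(g)$ is unchanged under replacing $g$ by a positive power may be used to put $g$ in a convenient form.

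Granting these factorisations, the conclusion is immediate. Since $\ol{\con(g)}$ and $\ol{\con(g\inv)}$ are contained in $G^\dagger_g$, since $U_0\subseteq\N_G(G^\dagger_g)$, and since $\N_G(G^\dagger_g)$ is a subgroup (closed under products), we obtain $U_-\subseteq\ol{\con(g)}\,U_0\subseteq\N_G(G^\dagger_g)$ and likewise $U_+\subseteq\N_G(G^\dagger_g)$. Therefore $U = U_+U_-\subseteq\N_G(G^\dagger_g)$, so the normaliser of $G^\dagger_g$ contains the open subgroup $U$ and is in particular open; that is, $G^\dagger_g$ is locally normal in $G$.
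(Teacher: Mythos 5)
Your proof is correct, but note what you are being compared against: the paper does not prove this lemma at all --- it is imported wholesale from \cite[Theorem~1.2(i)]{ReidFlat} --- so the relevant comparison is with the cited source, and your tidy-subgroup argument is essentially the standard one underlying that result. The individual steps check out: with $U_+ = \bigcap_{n\ge 0}g^nUg^{-n}$ and $U_- = \bigcap_{n\ge 0}g^{-n}Ug^n$ you have the directions paired correctly ($gU_-g\inv \subseteq U_-$, so $U_-$ goes with $\con(g)$ and $U_+$ with $\con(g\inv)$); the uniform-continuity argument that the compact, bi-$g$-invariant group $U_0 = U_+\cap U_-$ normalises $\con(g^{\pm 1})$, hence $G^\dagger_g$, is sound; and once $U_\mp \subseteq \ol{\con(g^{\pm 1})}\,U_0$ is granted, the conclusion $U = U_+U_- \subseteq \N_G(G^\dagger_g)$ follows exactly as you say (the identity $U = U_+U_-$ being precisely the tidiness property, so you should invoke Willis's existence theorem for tidy subgroups explicitly). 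Two attribution-level points deserve care at your ``main obstacle''. First, the factorisation $U_- \subseteq \ol{\con(g)}\,U_0$ is usually obtained by observing that the compact sets $g^nU_-g^{-n}$ decrease to $U_0$, so $U_-$ lies in the relative contraction group $\con(g, U_0)$, and then applying the theorem $\con(g,K) = \con(g)K$ for $K$ compact and $g$-invariant. Second, Baumgartner--Willis prove that theorem only for \emph{metrizable} groups, whereas the lemma is stated for an arbitrary \tdlc group $G$; for full generality you need Jaworski's extension removing the metrizability hypothesis. With those two citations supplied, your reconstruction is a complete and faithful proof of the quoted result.
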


\begin{lemma}[{See \cite[Theorem~1.4]{ReidFlat}}]\label{lemma:dagger_cc}
Let $G$ be a \tdlc group, let $H$ be a closed subgroup of $G$ and let $K$ be a subgroup of $H$, such that $\overline{K}$ is cocompact in $H$.  Then $G^{\dagger}_H = G^{\dagger}_K$.
\end{lemma}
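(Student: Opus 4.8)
The inclusion $G^{\dagger}_K \le G^{\dagger}_H$ is immediate, since $K \cup K\inv \subseteq H \cup H\inv$ means every defining generator $\ol{\con_G(k)}$ of $G^{\dagger}_K$ already occurs among the generators of $G^{\dagger}_H$. The whole content is therefore the reverse inclusion, and here the plan is to show that $\ol{\con_G(h)} \le N := G^{\dagger}_K$ for every $h \in H$; as $N$ is closed it suffices to prove $\con_G(h) \subseteq N$. First I would record that $N$ is normalised by $\ol{K}$: for $k' \in K$ and a generator $\ol{\con_G(k)}$ with $k \in K \cup K\inv$ one has $k'\ol{\con_G(k)}k'\inv = \ol{\con_G(k'kk'\inv)}$, which is again one of the defining generators because $k'kk'\inv \in K \cup K\inv$; thus $K \le \N_G(N)$, and since $\N_G(N)$ is closed we get $\ol{K} \le \N_G(N)$.

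With $N$ closed and $\ol K$-invariant, I would prove $\con_G(h) \subseteq N$ in two stages, the common engine being that elements lying close together share contraction dynamics. Stage one treats $h \in \ol K$ --- which already includes the case where $K$ is merely dense in $\ol K$. Rather than passing to a limit (contraction groups are not continuous in the element), I would use Willis's theory: the scale $s\co G \to \bN$ is continuous and tidiness is stable under small perturbations, so for $k \in K$ sufficiently close to $h$ one has $s(k)=s(h)$ and a common tidy subgroup $V$; the Baumgartner--Willis description then realises $\con_G(h)$ as an increasing union of $h$-conjugates of a fixed compact subgroup of $V$ shared with $\con_G(k)$, so each element of $\con_G(h)$ lies in some $\ol{\con_G(k)}$ with $k\in K$, hence in $N$. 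Stage two treats general $h \in H$ using the cocompact decomposition $H=\ol K C$ with $C$ compact. By a recurrence argument in the compact coset space $H/\ol K$ there are arbitrarily large $m$ with $h^m\ol K$ as close as we like to $\ol K$, that is $h^m = wk$ with $k\in\ol K$ and $w$ near the identity; since $h^m$ is then close to the $\ol K$-element $k$, the same tidy comparison as in stage one gives $\con_G(h^m) \subseteq \ol{\con_G(k)} \le N$, and $\con_G(h)=\con_G(h^m)$ by invariance of contraction groups under positive powers.

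The main obstacle is exactly the failure of $g \mapsto \con_G(g)$ to be continuous, which blocks every naive limiting argument and pushes the tidy/flat-group bookkeeping into the centre of the proof. The delicate point common to both stages is the tidy comparison for two nearby elements $g,g'$: even though $g'$ (here an element of $\ol K$ escaping to infinity along the powers $h^m$) is unbounded, one must certify that $\con_G(g)$ and $\con_G(g')$ are built from the \emph{same} compact tidy data, so that $\ol{\con_G(g)} \le \ol{\con_G(g')}$. Establishing this from continuity of the scale, stability of tidy subgroups, and the uniform-contraction estimate for compact subsets of a contraction group recalled above is where essentially all the effort lies; once it is in place, the reductions in the two stages are routine.
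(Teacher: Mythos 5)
The easy parts of your argument are fine: $G^\dagger_K \le G^\dagger_H$ is immediate, and $\ol{K} \le \N_G(G^\dagger_K)$ follows exactly as you say. But the engine driving both of your stages --- that if $k$ is sufficiently close to $h$ then $s(k)=s(h)$, the two elements admit a common tidy subgroup, and consequently $\con_G(h) \subseteq \ol{\con_G(k)}$ --- is false, and this is a genuine gap rather than a technical one. The scale is indeed continuous (hence locally constant, being integer-valued), but the scale and tidy subgroups do not record the \emph{direction} of contraction, and $g \mapsto \con_G(g)$ is not ``locally monotone'' in any sense. Concretely, take $G = \Aut(T_d)$, let $h$ be hyperbolic with repelling end $\omega_-$, and let $v$ fix pointwise the ball of radius $R$ about a base vertex $x_0$; then $k := hv \to h$ as $R \to \infty$, and $k$ is hyperbolic with the same translation length (so the same scale), but its repelling end $\omega_-(k)$ may branch off the axis of $h$ beyond radius $R$. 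Every $g \in \con_G(k)$ fixes the points $k^{-n}x_0$ for $n$ large and hence fixes $\omega_-(k)$; since end stabilisers are closed, $\ol{\con_G(k)} \le \mathrm{Stab}_G(\omega_-(k))$. Meanwhile $\con_G(h)$ contains elements that fix pointwise a half-tree far out toward $\omega_-$ while moving $\omega_-(k)$. So $\con_G(h) \not\subseteq \ol{\con_G(k)}$ for elements $k$ arbitrarily close to $h$, and stage one collapses already in the density case $H = \ol{K}$; your argument never uses the hypothesis that $K$ is a \emph{subgroup} dense in $\ol K$, which would have to enter essentially in any repair.

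Stage two has an additional, independent defect. Compactness of $H/\ol{K}$ only gives $m < n$ with $h^m\ol{K}$ and $h^n\ol{K}$ in a common translate $xV\ol{K}$, whence $h^{n-m} \in \ol{K}\,V'\,\ol{K}$; since $\ol{K}$ is in general noncompact, it does not normalise $V'$ even approximately, so you cannot move the small factor to one side and write $h^{n-m} = wk$ with $w$ near $1$ and $k \in \ol{K}$. The ``delicate point'' you flag at the end is therefore not merely delicate but unavailable as stated. For comparison: the paper contains no internal proof of this lemma --- it is quoted from \cite[Theorem~1.4]{ReidFlat} --- and the cited argument does not compare contraction data of nearby elements at all; it works globally with an arbitrary $h \in H$, exploiting the compact factor $H/\ol{K}$, uniform contraction on compact subsets of contraction groups (the fact recalled just before Lemma~\ref{lemma:dagger_ln}), and normaliser properties of relative Tits cores of the kind recorded in Lemmas~\ref{lemma:dagger_ln} and~\ref{lemma:dagger_containment}. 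Any correct proof along your outline would have to abandon the perturbation/tidy-subgroup mechanism in favour of such a global argument.
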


\begin{lemma}[{\cite[Theorem~1.5]{ReidFlat}}]\label{lemma:dagger_containment}
Let $G$ be a \tdlc group, let $D$ be a subgroup of $G$ (not necessarily closed), and let $X \subseteq \ol{D}$.  Suppose that there is an open subgroup $U$ of $G$ such that $U \cap G^\dagger_X \le \N_G(D)$.  Then $G^\dagger_X \le D$.
\end{lemma}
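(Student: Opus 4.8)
The plan is to use the contraction dynamics of each $h\in X$ to funnel pieces of $G^\dagger_X$ into the zone $U\cap G^\dagger_X\le\N_G(D)$ where the elements normalise $D$, and then to upgrade this normalising behaviour to honest membership in $D$ by approximating $h$ (and its powers) by elements of $D$, which is exactly what the hypothesis $X\subseteq\overline D$ makes possible. First I would reduce to a single contraction group. Since $\overline D$ is a subgroup we have $X\cup X^{-1}\subseteq\overline D$, and by definition $G^\dagger_X=\la\overline{\con_G(h)}\mid h\in X\cup X^{-1}\ra$; as $D$ is a subgroup it then suffices to prove $\overline{\con_G(h)}\le D$ for each fixed $h\in X\cup X^{-1}$. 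Fix such an $h$, write $C=\con_G(h)$, let $\alpha$ denote conjugation by $h$ (so $\alpha(C)=C$ and $h\in\overline D$), and note $\overline C\le G^\dagger_X$, whence $U\cap\overline C\le\N_G(D)$ and, by the uniform contraction property, $\alpha^n(K)\subseteq U\cap\overline C\le\N_G(D)$ for every compact $K\subseteq\overline C$ and all large $n$.

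The core step is to show that an open subgroup $P$ of $C$ lies in $D$. Here I would exploit commutators: for $y\in C\cap U$ (so $y\in\N_G(D)$) and any $d\in D$, the element $[d,y]=d\,(y d^{-1}y^{-1})$ lies in $D$, since $y$ normalises $D$ and $d\in D$. Choosing $d\in D$ close to $h$ (possible as $h\in\overline D$), the map $y\mapsto[d,y]$ is uniformly close, on compacta, to $y\mapsto\alpha(y)y^{-1}$. On a contraction group the operator ``$\alpha-1$'' is invertible, its inverse being the series $-(1+\alpha+\alpha^2+\cdots)$, which converges precisely because $\alpha$ contracts; hence $y\mapsto\alpha(y)y^{-1}$, and therefore $y\mapsto[d,y]$ for $d$ sufficiently near $h$, carries a neighbourhood of $1$ in $C$ onto a neighbourhood of $1$. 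Since that image consists of honest commutators, it lies in $D$, and the open subgroup $P\le C$ it generates satisfies $P\le D$.

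With $P\le D$ in hand I would expand across all of $C$ while remaining inside $D$. As $P$ is open in $C$ and $\alpha$ contracts, one has $C=\bigcup_{n\ge0}\alpha^{-n}(P)$. To keep the enlargements in $D$ I would conjugate $P$ not by $h$ but by a single element $d\in D$ approximating $h$ that normalises $C$ and acts on it by expansion: then $d^{-n}Pd^{n}\le D$ for all $n$ (because $d,P\subseteq D$) and $\bigcup_{n\ge0}d^{-n}Pd^{n}=C$, giving $C\le D$. Finally $\overline C=C\cdot\mathrm{nub}(h)$ with the nub a compact $\alpha$-invariant subgroup, and the same dynamics, together with Lemma~\ref{lemma:dagger_cc}, place the closure in $D$, yielding $\overline{\con_G(h)}\le D$ as required.

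The main obstacle is the passage from ``normalises $D$'', an open topological condition, to ``lies in $D$'', membership in a subgroup that need not be closed: the naive identity $h^{-n}(\cdot)h^{n}=\lim_{d\to h}d^{-n}(\cdot)d^{n}$ only places elements in $\overline D$, never directly in $D$. Both devices above are built to sidestep this — in the core step the commutators $[d,y]$ are genuinely in $D$ for each fixed $d$ (no limit is taken), and the invertibility of $\alpha-1$ lets these honest commutators already fill a neighbourhood; in the expansion step a whole subgroup $P\le D$ is conjugated by one $d\in D$, so each $d^{-n}Pd^{n}$ is again honestly in $D$. The genuinely delicate point, where I expect the real work to lie, is securing a single $d\in D$ close enough to $h$ that simultaneously normalises $C$ (equivalently $\con_G(dhd^{-1})=C$) and expands $P$ to exhaust it; controlling this requires that $C$ depends only on the contracting part of $\alpha$ and is stable under small perturbations of $h$ within $\overline D$.
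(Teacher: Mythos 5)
First, a point of comparison: the paper contains no proof of this lemma --- it is imported verbatim from \cite[Theorem~1.5]{ReidFlat}, and the proof there rests on the Caprace--Reid--Willis theorem on \emph{limits of contraction groups} (lower semicontinuity of $g \mapsto \con_G(g)$). Your basic mechanism is the right one, and it is the mechanism underlying the real proof: since $D$ need not be closed, membership in $D$ cannot be obtained as a limit of approximations; it must come from a single honest identity, namely a commutator $[d,y] \in D$ with $d \in D$ fixed near $h$ and $y$ ranging over the normalising set $U \cap \ol{\con_G(h)} \subseteq U \cap G^\dagger_X \le \N_G(D)$, where the \emph{preimage} $y$ is built by the convergent telescoping product $y = \prod_{n \ge 0}\alpha^n(z\inv)$. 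Your reduction to a single $h \in X \cup X\inv$ and the observation that $[d,y] = d(yd\inv y\inv) \in D$ are both correct.

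However, there are genuine gaps at the two places where the real work lies. (1) The claim that $y \mapsto [d,y]$ carries a neighbourhood of $1$ in $C = \con_G(h)$ \emph{onto} a neighbourhood of $1$, because it is ``uniformly close on compacta'' to the invertible map $\Phi(y) = \alpha(y)y\inv$, is unsupported: there is no open-mapping or degree principle in the t.d.l.c.\ setting, so closeness does not transfer surjectivity. Concretely, the Neumann series solving $[d,y] = z$ \emph{exactly} is $y = \prod_{n\ge 0}\beta^n(z\inv)$ with $\beta$ conjugation by $d$, and it converges only for $z \in \con_G(d)$, not $z \in \con_G(h)$; while if one instead runs successive approximation against $\Phi$, the error at each stage is $\alpha([w,y_k])$ with $w = h\inv d$, which does \emph{not} lie in $C$, so the correction equation cannot be solved within $C$ and the iteration does not close up. For the same reason the image of $[d,\cdot\,]$ is not contained in $C$, so the subgroup $P$ you generate satisfies $P \le D$ but not $P \le C$, undermining the next step. (2) The expansion step requires a single $d \in D$ near $h$ that normalises $C$ and expands it; no such $d$ need exist, since contraction groups are highly unstable under small perturbations of $h$ (e.g.\ perturbing a hyperbolic tree automorphism moves its axis and hence its contraction group) --- you flag this yourself, but ``$C$ is stable under small perturbations of $h$ within $\ol{D}$'' is false in general, and the correct substitute is precisely the limits-of-contraction-groups theorem, a substantial result rather than a stability remark. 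Finally, the closing step placing $\ol{\con_G(h)}$ (a closed group) inside the possibly non-closed $D$ via ``the same dynamics'' together with Lemma~\ref{lemma:dagger_cc} is asserted, not proved; Lemma~\ref{lemma:dagger_cc} concerns relative Tits cores and does not produce membership of the nub in $D$. So the proposal identifies the right devices but leaves unproved exactly the three points --- exact solvability of $[d,y]=z$, perturbation of contraction groups, and the nub/closure --- that constitute the actual content of \cite[Theorem~1.5]{ReidFlat}.
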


\begin{prop}\label{prop:dagger_gg}
Let $G$ be a \tdlc group and let $g \in G$.  Suppose that $\overline{G^\dagger_g}$ is cocompact in $G$.  Then $G^\dagger = G^\dagger_g$.
\end{prop}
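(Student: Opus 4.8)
The plan is to prove the two inclusions $G^\dagger_g \le G^\dagger$ and $G^\dagger \le G^\dagger_g$ separately. The first is immediate: by definition $G^\dagger_g = \la \ol{\con_G(g)}, \ol{\con_G(g\inv)} \ra$, and both generating subgroups occur among the generators $\ol{\con(h)}$, $h \in G$, of the Tits core $G^\dagger$. So everything reduces to establishing the reverse inclusion, and this is where the cocompactness hypothesis enters.

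The first key step is to feed the cocompactness hypothesis into Lemma~\ref{lemma:dagger_cc}. Taking $H = G$ (which is closed in itself) and $K = G^\dagger_g$, the assumption that $\ol{G^\dagger_g}$ is cocompact in $G$ is exactly the hypothesis of that lemma, so it yields $G^\dagger_G = G^\dagger_{G^\dagger_g}$. Since $G = G\inv$, the left-hand side is just $G^\dagger$, and I obtain the identity $G^\dagger = G^\dagger_{G^\dagger_g}$. In words: passing to the relative Tits core of the (a priori much larger) set $G^\dagger_g$ recovers the whole Tits core. Note that $G^\dagger_g$ is only a subgroup of $G$ — not necessarily closed — but relative Tits cores are defined for arbitrary subsets, so $G^\dagger_{G^\dagger_g}$ makes sense and, because $G^\dagger_g$ is a group, equals $\la \ol{\con_G(h)} \mid h \in G^\dagger_g \ra$.

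The second key step is to show $G^\dagger_{G^\dagger_g} \le G^\dagger_g$, which together with the identity just obtained finishes the proof. Here I would apply Lemma~\ref{lemma:dagger_containment} with $D = X = G^\dagger_g$; the requirement $X \subseteq \ol{D}$ then holds trivially. That lemma asks for an open subgroup $U$ with $U \cap G^\dagger_X \le \N_G(D)$. The decisive point is that $G^\dagger_g$ is locally normal by Lemma~\ref{lemma:dagger_ln}, so $\N_G(G^\dagger_g)$ is \emph{open}; choosing $U = \N_G(G^\dagger_g)$ makes the required containment $U \cap G^\dagger_X \le U = \N_G(D)$ automatic. Lemma~\ref{lemma:dagger_containment} then delivers $G^\dagger_{G^\dagger_g} \le G^\dagger_g$, and combining with $G^\dagger = G^\dagger_{G^\dagger_g}$ gives $G^\dagger \le G^\dagger_g$, as desired.

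The main obstacle is conceptual rather than computational: one has to recognise this two-step mechanism. The cocompactness hypothesis is used exactly once, to inflate $G^\dagger_g$ up to the full $G^\dagger$ via the cocompactness-invariance of relative Tits cores (Lemma~\ref{lemma:dagger_cc}); thereafter local normality supplies the open subgroup demanded by the containment lemma essentially for free, simply by taking $U$ to be the normaliser itself. Once this structure is seen, no genuine calculation remains — the only care needed is in tracking the possibly non-closed subgroup $G^\dagger_g$ and in checking that the relative Tits core of the set $G^\dagger_g$ coincides in both lemma applications.
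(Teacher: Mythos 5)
Your proof is correct, and it rests on the same three ingredients as the paper's own argument --- Lemmas~\ref{lemma:dagger_ln}, \ref{lemma:dagger_cc} and \ref{lemma:dagger_containment} --- but it deploys them in a genuinely more direct configuration. The paper first passes to the normaliser $O = \N_G(G^\dagger_g)$: since $O$ is open by Lemma~\ref{lemma:dagger_ln}, hence closed, it contains the cocompact subgroup $\ol{G^\dagger_g}$ and therefore has finite index in $G$; from this the paper deduces $O^\dagger = G^\dagger$ (a step that quietly relies on the invariance of contraction groups under positive powers, so that $\con_G(x) = \con_O(x^n)$ for suitable $n \geq 1$), and only then applies Lemmas~\ref{lemma:dagger_cc} and \ref{lemma:dagger_containment} \emph{inside} $O$, taking $K = X := \ol{G^\dagger_g}$ and using normality of $G^\dagger_g$ in $O$ to feed the containment lemma. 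You instead apply Lemma~\ref{lemma:dagger_cc} globally with $H := G$ and $K := G^\dagger_g$ --- legitimate, since the lemma only requires $K$ to be a subgroup with $\ol{K}$ cocompact in $H$ --- obtaining $G^\dagger = G^\dagger_{G^\dagger_g}$ in one stroke, and then apply Lemma~\ref{lemma:dagger_containment} in $G$ itself with $D = X := G^\dagger_g$ and $U := \N_G(G^\dagger_g)$, where openness of $U$ (the only use of local normality) makes the hypothesis $U \cap G^\dagger_X \le \N_G(D)$ automatic. What your route buys is a shorter argument that bypasses both the finite-index reduction and the identity $O^\dagger = G^\dagger$, hence also the implicit appeal to power-invariance of contraction groups; what the paper's route buys is that every lemma is applied within a single open subgroup with a closed subgroup in the role of $K$ and $X$, which some readers may find easier to audit. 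Both arguments are sound, and your handling of the non-closed subgroup $G^\dagger_g$ (checking $X \subseteq \ol{D}$ and that $K$ need not be closed) is exactly the care the lemmas require.
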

\begin{proof}
The normaliser $O$ of $G^\dagger_g$ is open in $G$ by Lemma~\ref{lemma:dagger_ln}.  In particular, $O$ is closed, so it contains $H = \overline{G^\dagger_g}$; hence $O$ is cocompact, so it has finite index in $G$.  Hence $O^\dagger = G^\dagger$, and by Lemma~\ref{lemma:dagger_cc} (applied to $G=H:=O$ and $K:=H$), we have $O^\dagger = O^\dagger_H$.  Since $G^\dagger_g$ is normal in $O$, Lemma~\ref{lemma:dagger_containment} (applied to $G=U:=O$, $D:=G^\dagger_g$, and $X:=H$) now shows that $O^\dagger_H \leq G^\dagger_g$.  We have now shown $G^\dagger \leq G^\dagger_g$; the reverse inclusion is clear.
\end{proof}

\section{Preliminaries on Kac--Moody groups}\label{sec:prelim_KM}

\subsection{Coxeter groups}

The general reference for this subsection and the next two is \cite{BrownAbr}. 

A \defbold{Coxeter system} is a pair $(\WW,S)$ consisting of a group $\WW$ (the \defbold{Coxeter group}) with a specified subset $S = \{s_1,\dots,s_n\} \subseteq \WW$ and presentation
\[
\WW = \langle s_1,\dots,s_n \mid s^2_i \; (1 \le i \le n), \; (s_is_j)^{m_{ij}} \; (1 \le i < j \le n) \rangle
\]
where $2 \le m_{ij} \le \infty$ (here $(s_is_j)^{\infty}$ can be read as the absence of a relation).  For the purposes of this article we only allow Coxeter groups that are finitely generated, that is, $|S| < \infty$. 

For each subset $I$ of $S$ there is an associated \defbold{standard parabolic subgroup} $\WW_I:=\langle I\rangle$, which also forms a Coxeter system $(\WW_I,I)$; a \defbold{parabolic subgroup} of $\WW$ is a conjugate of a standard parabolic subgroup.

A Coxeter system $(\WW,S)$ has an associated diagram $\Gamma_S = \Gamma_{(\WW,S)}$, with vertex set $S$ and an edge between $s_i$ and $s_j$ if and only if $m_{ij} \ge 3$ (and labelled $m_{ij}$ if $m_{ij} \ge 4$); the connected components of this graph are the \defbold{components} of $S$ (or of $(\WW,S)$), and $(\WW,S)$ is \defbold{irreducible} if $\Gamma_S$ is connected. We will often identify a subset $J$ of $S$ with the induced subgraph of $\Gamma_S$ with vertex set $J$.  A subset $J$ of $S$ is \defbold{spherical} if it generates a finite subgroup; more generally we define the \defbold{spherical part} $J^{\sph}$ of $J \subseteq S$ to be the union of the spherical components of $\Gamma_J$. We also let $J^{\infty}:=J\setminus J^{\sph}$ denote the {\bf essential part} of $J$, and call $J$ {\bf essential} if $J=J^\infty$.  Finally, we set $J^\perp:=\{i\in I \ | \ \textrm{$m_{ij}=2$ for all $j\in J$}\}$.

A decomposition $w=s_{i_1}\dots s_{i_d}$ of an element $w\in\WW$ as a product of generators with $d\in\bN$ minimal is called {\bf reduced}; in that case, $d$ is called the {\bf length} of $w$, denoted $\ell(w)$. The set 
$$\supp(w):=\{i_1,\dots,i_d\}\subseteq I$$
is then independent of the choice of a reduced decomposition for $w$, and is called the {\bf support} of $w$.

The intersection $\Pc(w)$ of all parabolic subgroups containing an element $w\in\WW$ is again a parabolic subgroup, called the {\bf parabolic closure} of $w$.  We record for future reference the following fact about parabolic closures.

\begin{lemma}\label{lemma:prelim_parabolic}
Let $J\subseteq I$ be essential. Then $\WW_J$ has no proper parabolic subgroups of finite index.  There exists $w\in \WW$ such that $\Pc(w)=\WW_J$; moreover, for any such $w$, then $\Pc(w^n)=\WW_J$ for all $n \neq 0$.
\end{lemma}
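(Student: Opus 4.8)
The plan is to prove the three assertions in an order that lets the last two feed the first, freely using the standard facts about parabolic subgroups recorded in \cite{BrownAbr}: every parabolic subgroup of $\WW_J$ is a $\WW_J$-conjugate of some $\WW_K$ with $K\subseteq J$; one has $\WW_K\cap S=K$, so that $v\in\WW_K$ if and only if $\supp(v)\subseteq K$; and $\Pc(v)$ is the unique minimal parabolic subgroup containing $v$, computed indifferently in $\WW$ or in any standard parabolic containing $v$. I would also use that finite-order elements have spherical (hence finite) parabolic closure, so that any $w$ with $\Pc(w)$ infinite has infinite order.

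First I would treat the existence statement. Write $J=\bigsqcup_i J_i$ for the decomposition into components; the essential hypothesis says each $\WW_{J_i}$ is irreducible and infinite. For an irreducible non-spherical Coxeter group there is an \emph{essential} element, that is, an infinite-order $w_i\in\WW_{J_i}$ with $\Pc(w_i)=\WW_{J_i}$; I would invoke the existence of such elements (realised for instance by a suitable Coxeter element, or by a rank-one isometry of the Davis complex in the sense of Caprace--Fujiwara). Since distinct components commute and, for a direct product $\WW_J=\prod_i\WW_{J_i}$, every parabolic subgroup $Q$ splits as $Q=\prod_i(Q\cap\WW_{J_i})$ with each factor parabolic in $\WW_{J_i}$, applying this to $Q=\Pc(\prod_i w_i)$ and using minimality gives $\Pc(\prod_i w_i)=\prod_i\Pc(w_i)=\prod_i\WW_{J_i}=\WW_J$. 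So $w=\prod_i w_i$ works.

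For the invariance under powers, fix $w$ with $\Pc(w)=\WW_J$ and $n\neq 0$. One inclusion is immediate: any parabolic containing $w$ contains $\langle w\rangle$, hence $w^n$, so $\Pc(w)$ is a parabolic containing $w^n$ and thus $\Pc(w^n)\leq\Pc(w)=\WW_J$. For the reverse inclusion, set $Q:=\Pc(w^n)$. Since $w$ commutes with $w^n$ it normalises $Q$ (indeed $wQw\inv=\Pc(ww^nw\inv)=\Pc(w^n)=Q$), and $w^n\in Q$. It then suffices to deduce $w\in Q$, for that forces $\Pc(w)\leq Q$ and hence $Q=\WW_J$. I would obtain $w\in Q$ either by citing the general theorem that $\Pc(w^n)=\Pc(w)$ for all $n\neq0$, of which the present statement is a specialisation, or, more self-containedly, by analysing the image of $w$ in $\mathrm{N}_{\WW}(Q)/Q$.

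Finally, the first assertion follows from the previous two. A proper parabolic subgroup of $\WW_J$ of finite index is a $\WW_J$-conjugate of some standard $\WW_K$ with $K\subsetneq J$, and conjugation preserves index, so it suffices to rule out $[\WW_J:\WW_K]<\infty$ for $K\subsetneq J$. Choose $s\in J\setminus K$ lying in a component $J_0$; projecting $\WW_J\to\WW_{J_0}$ sends $\WW_K$ onto $\WW_{K\cap J_0}$, whence $[\WW_{J_0}:\WW_{K\cap J_0}]\leq[\WW_J:\WW_K]$ with $K\cap J_0\subsetneq J_0$, reducing us to the irreducible infinite case. If there $\WW_L$ ($L\subsetneq J_0$) had finite index, its normal core $N$ would have finite index, so the image of an essential element $w_0\in\WW_{J_0}$ in the finite group $\WW_{J_0}/N$ would have some order $k\geq1$, giving $w_0^k\in N\leq\WW_L$; but then $\WW_{J_0}=\Pc(w_0)=\Pc(w_0^k)\leq\WW_L\subsetneq\WW_{J_0}$, a contradiction. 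The main obstacle is concentrated in the two substantive inputs — the existence of essential elements in irreducible non-spherical Coxeter groups, and the invariance of the parabolic closure under nonzero powers — while the remaining steps are the routine reduction to irreducible components through supports and projections.
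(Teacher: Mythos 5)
There is a genuine gap, and it sits exactly where you locate the ``substantive input'': the invariance of $\Pc(w^n)$ under nonzero powers. Your primary justification --- ``the general theorem that $\Pc(w^n)=\Pc(w)$ for all $n\neq 0$'' --- is not a theorem. Take $\WW=\langle s\rangle\times \WW'$ with $\WW'=\langle t,u\rangle$ infinite dihedral, and $w=s\cdot tu$ (the factors commute, so $w$ has infinite order). Then $\Pc(w)=\langle s\rangle\times\WW'$, while $\Pc(w^2)=\{1\}\times\WW'$ is a proper (finite-index) parabolic subgroup. The same example defeats your fallback: there $w$ normalises $Q:=\Pc(w^2)$ and $w^2\in Q$, yet $w\notin Q$, so the implication ``$w$ normalises $Q$ and $w^n\in Q$, hence $w\in Q$'' is false in general, and no analysis of the image of $w$ in $\N_{\WW}(Q)/Q$ can extract $w\in Q$ without using essentiality of $J$. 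What is true in general is only that $\Pc(w^n)$ has \emph{finite index} in $\Pc(w)$ --- this is the actual content behind the paper's citation of \cite[Lemma~2.4]{openKM} --- and upgrading finite index to equality requires precisely the first assertion, that $\WW_J$ has no proper finite-index parabolic subgroups. But in your proposal the first assertion is deduced \emph{last}, from the powers statement (via the normal-core argument giving $w_0^k\in\WW_L$ and then $\Pc(w_0^k)=\Pc(w_0)$). As structured, your argument is circular: the powers step has no valid self-contained proof, and its only honest completion runs through the assertion you prove afterwards.

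The paper proves the assertions in the opposite order, which dissolves the circularity: assertion 1 is quoted from \cite[Proposition~2.43]{BrownAbr} (proper parabolic subgroups of an infinite irreducible Coxeter group have infinite index, plus the easy reduction to components that you also carry out); the existence of $w$ with $\Pc(w)=\WW_J$ is \cite[Corollary~4.3]{CF10} (in spirit, your essential-element-plus-product argument, which is fine, including the splitting $\Pc(\prod_i w_i)=\prod_i\Pc(w_i)$); and the powers statement is then deduced from assertion 1 together with \cite[Lemma~2.4]{openKM}. Your writeup is repairable by the same reordering: first establish assertion 1 by citing the infinite-index fact for irreducible infinite Coxeter groups (rather than your normal-core contradiction); then for the powers step observe that $\langle Q,w\rangle$ contains $Q=\Pc(w^n)$ with index dividing $n$, invoke the finite-index behaviour of parabolic closures to conclude that $Q$ has finite index in $\Pc(w)=\WW_J$, and finish with assertion 1 to get $Q=\WW_J$. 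Note that your version of the powers argument nowhere uses that $J$ is essential, which is itself a warning sign: the counterexample above shows the statement genuinely fails without that hypothesis.
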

\begin{proof}
The first conclusion follows from \cite[Proposition~2.43]{BrownAbr}.  The existence of $w\in \WW$ such that $\Pc(w)=\WW_J$ is given for instance by \cite[Corollary~4.3]{CF10}.  Given such an element $w$, since $\WW_J$ has no proper parabolic subgroups of finite index, it follows by \cite[Lemma~2.4]{openKM} that $\Pc(w^n)=\WW_J$ for all $n \neq 0$.
\end{proof}

Standard parabolics of a Coxeter system can be conjugate, but the conjugating element is necessarily of a special form given by V. Deodhar (\cite{Deo}); in particular, the essential part is preserved.  The following is an expanded version of \cite[Lemma~2.1]{openKM}.
 
\begin{lemma}\label{lem:Deo}
Let $(\WW,S)$ be a Coxeter system and let $J \subseteq S$ be essential.  Then the normaliser $\N_{\WW}(\WW_J) = \WW_J \times {\WW}_{J^\perp}$.  Moreover, if $w \in {\WW}$ is such that $w\inv Jw \subseteq S$, then $w \in {\WW}_{J^\perp}$; in particular, $w\inv Jw = J$ and $\CC_{\WW}({\WW}_J) = {\WW}_{J^\perp}$.
\end{lemma}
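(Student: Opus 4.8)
The plan is to prove the ``moreover'' statement first, since the equalities for $\N_\WW(\WW_J)$ and $\CC_\WW(\WW_J)$ both follow from it together with elementary facts. I begin with the easy inclusions. If $i\in J^\perp$ and $j\in J$ then $m_{ij}=2$, so $s_i$ and $s_j$ commute; hence $\WW_{J^\perp}$ centralises $\WW_J$. Since $J\cap J^\perp=\emptyset$, the standard identity $\WW_A\cap\WW_B=\WW_{A\cap B}$ gives $\WW_J\cap\WW_{J^\perp}=\triv$, so $\WW_J\WW_{J^\perp}=\WW_J\times\WW_{J^\perp}$ is an internal direct product lying in $\N_\WW(\WW_J)$, and $\WW_{J^\perp}\le\CC_\WW(\WW_J)$. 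It remains to prove the reverse inclusions.

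Next I show how the normaliser and centraliser statements reduce to the ``moreover'' claim. Given $w\in\N_\WW(\WW_J)$, write $w=ud$ where $u\in\WW_J$ and $d$ is the minimal-length representative of the coset $\WW_J w$; then $d=u\inv w$ still normalises $\WW_J$ and is $J$-reduced. Using the root-system description of minimal coset representatives --- $d\inv$ maps $\Phi_J^+$ into the positive roots, while $d\inv\Phi_J=\Phi_J$ because $d$ normalises $\WW_J$ --- one gets $d\inv\Phi_J^+=\Phi_J^+$, so $d\inv$ permutes the simple roots of $\WW_J$ and hence $d\inv Jd=J\subseteq S$. Applying the ``moreover'' statement to $d$ yields $d\in\WW_{J^\perp}$, whence $w\in\WW_J\times\WW_{J^\perp}$; this proves $\N_\WW(\WW_J)=\WW_J\times\WW_{J^\perp}$. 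For the centraliser, the sandwich $\WW_{J^\perp}\le\CC_\WW(\WW_J)\le\N_\WW(\WW_J)=\WW_J\times\WW_{J^\perp}$ reduces matters to showing that an element $ud$ (with $u\in\WW_J$, $d\in\WW_{J^\perp}$) centralising $\WW_J$ forces $u=1$; since $d$ already centralises $\WW_J$, this means $u\in\Z(\WW_J)$, which is trivial because each irreducible component of the essential system $(\WW_J,J)$ is infinite and infinite irreducible Coxeter groups are centreless.

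I now turn to the ``moreover'' claim, which is the heart of the argument. Suppose $w\inv Jw\subseteq S$ and set $K:=w\inv Jw$, so that $w\inv\WW_J w=\WW_K$. The bijection $J\to K$, $s_j\mapsto w\inv s_jw$, preserves the orders $m_{ij}$, so $(\WW_K,K)$ is isomorphic to $(\WW_J,J)$ as a Coxeter system and is in particular essential. Passing to the minimal-length element $d$ of $\WW_J w=w\WW_K$ (a single coset, since $w\WW_K=\WW_J w$), one obtains a bi-reduced element with $d\inv Jd=K$. The decisive input is Deodhar's description \cite{Deo} of the conjugating elements between standard parabolics: since conjugacy preserves the essential part and $J=J^{\infty}$, this forces $K=J$ and places the minimal conjugator $d$ in $\WW_{J^\perp}$. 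Writing $w=ud$ with $u\in\WW_J$ and using that $d$ centralises $\WW_J$, the hypothesis becomes $u\inv Ju=w\inv Jw\subseteq S$; as $u\in\WW_J$ we have $u\inv Ju\subseteq S\cap\WW_J=J$, so $u$ conjugates the simple system $J$ to itself. Such a $u$ stabilises the fundamental chamber of the Coxeter complex of $(\WW_J,J)$, and since $\WW_J$ acts simply transitively on chambers and is essential --- so that the fundamental chamber is the unique chamber with its given set of walls, there being no opposite chamber --- we conclude $u=1$ and thus $w=d\in\WW_{J^\perp}$. Finally $w\inv Jw=J$ is immediate once $w\in\WW_{J^\perp}$, as $\WW_{J^\perp}$ centralises $\WW_J$.

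The main obstacle is the Deodhar step: extracting from \cite{Deo} that an essential standard parabolic is conjugate, within the standard parabolics, only to itself, and via a conjugator centralising it (equivalently, lying in $\WW_{J^\perp}$). This is precisely the point where essentiality is indispensable --- without it a longest element $w_0(\WW_J)$ could both realise a nontrivial diagram symmetry and obstruct the chamber argument above. The remaining ingredients (the identity $\WW_A\cap\WW_B=\WW_{A\cap B}$, the root-theoretic behaviour of minimal coset representatives, and the centrelessness of infinite irreducible Coxeter groups) are standard and can be cited from \cite{BrownAbr}.
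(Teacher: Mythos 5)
Your overall architecture is sound and close in spirit to the paper's: easy inclusions, reduction to a minimal-length (bi-reduced) conjugator via the root-theoretic description of coset representatives, a rigidity input from Deodhar, and a separate elimination of the residual $\WW_J$-factor; your finish for the centraliser via centrelessness of infinite irreducible Coxeter groups is also fine. The problem is at exactly the step you yourself flag as ``the main obstacle'': the statement you attribute to \cite{Deo} --- that an essential standard parabolic is conjugate among standard parabolics only to itself, and only via a minimal conjugator lying in $\WW_{J^\perp}$ --- is not a theorem of Deodhar's paper; it is, in substance, the lemma you are asked to prove. What \cite{Deo} (Proposition~5.5; see also \cite[\S 3.1]{Kra09}) actually provides is a factorisation of a minimal conjugator as a product $\nu_0\cdots\nu_t$ of elementary conjugations $\nu(I_i,s_i)=w_{K_i\setminus\{s_i\}}w_{K_i}$, where $K_i$ is the component of $I_i\cup\{s_i\}$ containing $s_i$ and is \emph{spherical}. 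The missing derivation --- which is precisely the paper's proof --- is the induction using essentiality: since $I_0=J$ is essential, the component $K_0$ of $J\cup\{s_0\}$ containing $s_0$ cannot meet $J$ (it would then contain a non-spherical component of $J$), so $K_0=\{s_0\}$, $\nu_0=s_0\in J^\perp$ and $I_1=J$; iterating yields $K=J$ and $d\in\WW_{J^\perp}$. As written, your proof of the ``moreover'' clause cites its own conclusion.

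There is a second, smaller defect in your elimination of $u\in\WW_J$ with $u\inv Ju=J$. The assertion that ``the fundamental chamber is the unique chamber with its given set of walls, there being no opposite chamber'' is not an argument: in the finite case the counterexample is produced \emph{by} the opposite chamber, but the absence of that particular construction does not establish uniqueness in the infinite case. The correct mechanism is a root-sign argument. Since $u\inv Ju=J$, $u$ permutes $\Pi_J$ up to sign; if $u\alpha_j<0$ and $u\alpha_k>0$ with $m_{jk}\geq 3$, then evaluating $u$ on the root $s_k\alpha_j=\alpha_j+c\alpha_k$ ($c>0$) and comparing with $\pm s_{k'}\alpha_{j'}$ gives a sign contradiction, so $J_-:=\{j\in J \mid u\alpha_j<0\}$ is a union of components of $J$; moreover $u$ sends every root of $\Phi^+_{J_-}$ negative, so $|\Phi^+_{J_-}|\leq\ell(u)<\infty$ and $J_-$ is spherical, hence empty by essentiality of $J$; finally $u\Pi_J=\Pi_J$ forces $u=1$ (this last implication is where the paper appeals to \cite[Proposition~2.73]{BrownAbr}). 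Note that essentiality enters twice, once in each of the two gaps above --- consistent with your correct observation that the longest element of a spherical component would otherwise be a counterexample --- but in both places your text offers the conclusion rather than the argument.
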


\begin{proof}
Let $w \in \WW$ such that $w\inv W_Jw=W_{J'}$ for some $J'\subseteq S$. By \cite[Proposition~3.1.6]{Kra09} (see also \cite[Proposition~1.3.5(a)]{Kra09}), we can write $w = xy$ where $x \in \WW_J$ and $y\inv\Pi_J = \Pi_{J'}$ (with $\Pi_J$ denoting, as in \emph{loc. cit.}, the set of basis vectors in the standard linear representation of $\WW_J$). Moreover, \cite[\S3.1]{Kra09} (or \cite[Proposition~5.5]{Deo}) yields sequences $J=I_0,I_1,\dots,I_{t+1}=J'$ of subsets of $S$ and $s_0,\dots,s_t$ of elements of $S$ such that for each $i$ the component $K_i$ of $J_i\cup\{s_i\}$ containing $s_i$ is spherical, and such that $y=\nu_0\dots\nu_t$ where $\nu_i:=\nu(I_i,s_i):=w_{K_i \setminus \{s_i\}} w_{K_i}$ satisfies $\nu_i\inv I_{i}\nu_i=I_{i+1}$ (here $w_K$ denotes the longest element of $\WW_K$). Since $I_0=J$ is essential, we have $K_0=\{s_0\}$ and hence $\nu_0=s_0\in J^\perp$ and $I_1=J$. Reasoning inductively, the same observation yields $I_i=J$ and $\nu_i=s_i\in J^\perp$ for all $i$, so that $J=J'$ and $y\in J^\perp$. Since the normaliser of $J$ in $\WW_J$ is trivial by \cite[Proposition~2.73]{BrownAbr}, the lemma now easily follows.
\end{proof}

\subsection{Coxeter complexes}
Given a Coxeter system $(\WW,S)$, the {\bf Coxeter complex} $\Sigma=\Sigma(\WW,S)$ is the simplicial complex with simplices the cosets $w\WW_J$ ($w\in\WW$, $J\subseteq S$), and face relation $\leq$ the opposite of the inclusion relation. The maximal simplices of $\Sigma$ (namely, the singletons $\{w\}$ with $w\in W$) are called {\bf chambers}. Given $w\in W$ and $s\in S$, the chambers $\{w\}$ and $\{ws\}$ are called {\bf $s$-adjacent}. A {\bf gallery} is a sequence $\Gamma=(D_0,\dots,D_k)$ of chambers such that for each $i$, the chambers $D_{i-1}$ and $D_i$ are $t_i$-adjacent for some $t_i\in S$; one then calls $\mathrm{typ}(\Gamma):=(t_1,\dots,t_k)\in S^k$ the {\bf type} of $\Gamma$ and $\ell(\Gamma):=k$ its {\bf length}. The {\bf chamber distance} $\mathrm{d_{Ch}}(C,D)$ between two chambers $C,D$ is the length of a minimal-length gallery connecting $C$ and $D$. 

Let $J\subseteq S$. A {\bf $J$-gallery} is a gallery $\Gamma$ with $\mathrm{typ}(\Gamma)\subseteq J^{\ell(\Gamma)}$. A {\bf $J$-residue} of $\Sigma$ (or {\bf residue of type $J$}) is the set of chambers connected to a given chamber by a $J$-gallery. If $C$ is a chamber and $R$ a residue, there is a unique chamber $C'$ of $R$ minimising $\mathrm{d_{Ch}}(C,C')$; it is called the {\bf projection} of $C$ on $R$ and is denoted $\mathrm{proj}_R(C)$. Two residues $R,R'$ are called {\bf parallel} if $\proj_R(R')=R$ and $\proj_{R'}(R)=R'$; equivalently, $\proj_R|_{R'}\co R'\to R$ is a bijection (whose inverse is then $\proj_{R'}|_R$).

The Coxeter complex $\Sigma$ admits a CAT(0) metric realisation $|\Sigma|_{\CAT}$, called the {\bf Davis complex} of $(\WW,S)$ (see \cite{Davis}). It is both a proper and geodesic metric space. The action by left translations of $\WW$ on $\Sigma$ induces an isometric action of $\WW$ on $|\Sigma|_{\CAT}$. In the sequel, we will identify $\Sigma$ with $|\Sigma|_{\CAT}$ and the chambers of $\Sigma$ with their metric realisation in $|\Sigma|_{\CAT}$.

\subsection{BN-pairs and buildings}\label{subsection:BNpairsaB}

A \defbold{BN-pair} in a group $G$ is a pair $(B,N)$ of subgroups such that $G = \grp{B,N}$ and $T:= B \cap N$ is normal in $N$, and $\WW = N/T$ admits a generating set $S$ such that the following holds:
\begin{enumerate}[(i)]
\item Given $s \in S$ and $w \in \WW$, then $BwB . BsB \subseteq BwB \cup BwsB$;
\item For all $s \in S$ we have $sBs\inv \not\subseteq B$.
\end{enumerate}
If such a set $S$ exists, it is uniquely determined by $B$ and $N$, and $(\WW,S)$ is a Coxeter system, with $\WW$ (or $(\WW,S)$) being the \defbold{Weyl group} of the BN-pair. Note that as $T\subseteq B$, for each $n\in N$, the cosets $nB$ and $Bn$ only depend on the image $w$ of $n$ in $\WW$, whence the slight abuse of notation $wB:= nB$  (resp. $Bw:=Bn$) in (i)--(ii) above.

We also recall that $G$ admits a double coset decomposition, the \defbold{Bruhat decomposition} $G = \bigsqcup_{w \in W} BwB$.  Analogously to subgroups of the Weyl group, subgroups of $G$ of the form $P_J = B\WW_JB$ are called \defbold{standard parabolic subgroups} of $G$ (or of the BN-pair); these are precisely the subgroups of $G$ that contain $B$.  A \defbold{parabolic subgroup} is then a conjugate of a standard parabolic subgroup.

Given a group $G$ with a BN-pair $(B,N)$, there is an associated {\bf building} $X$ of type $(\WW,S)$ on which $G$ acts by simplicial automorphisms: $X$ is a simplicial complex which is the union of copies of $\Sigma(\WW,S)$, called {\bf apartments}, and the $G$-action on the chambers (i.e. maximal simplicies) of $X$ can be identified with the left translation action of $G$ on the set $G/B$ of cosets of $B$ in $G$. In particular, $B$ is the stabiliser in $G$ of the {\bf fundamental chamber} $C_0:=B$ (the trivial coset). The subgroup $N$ stabilises a ``fundamental'' apartment $\Sigma_0$, and the $N$-action on $\Sigma_0\approx\Sigma(W,S)$ has kernel $T$ and can be identified with the natural $\WW$-action on $\Sigma(W,S)$.

The notions of galleries, chamber distance, (parallel) residues and projections naturally extend from the setting of Coxeter complexes to that of buildings. The standard parabolic subgroup $P_J$ of type $J$ then coincides with the stabiliser in $G$ of the {\bf standard $J$-residue} (i.e. the $J$-residue containing $C_0$), and hence parabolic subgroups of $G$ correspond to stabilisers of residues of $X$. The chamber distance can be refined as follows: the {\bf Weyl distance} $\delta(C,D)$ from a chamber $C$ to a chamber $D$ is the element of $\WW$ admitting the type of a minimal length gallery from $C$ to $D$ as an expression. If $R,R'$ are parallel residues, then the Weyl distance from a chamber $C$ of $R$ to its projection $\proj_{R'}(C)$ on $R'$ is independent of the choice of $C$, and is called the {\bf Weyl distance} from $R$ to $R'$.

Finally, any building $X$ of type $(\WW,S)$ admits a $\CAT$ metric realisation, called its {\bf Davis realisation} (see \cite{Davis}), such that the restriction of the $\CAT$ metric to each apartment yields an isometric copy of the Davis complex $|\Sigma(\WW,S)|_{\CAT}$. In particular, the Davis realisation of $X$ is a geodesic metric space, which is moreover proper when $X$ is {\bf locally finite}, that is, when $\{s\}$-residues are finite for all $s\in S$.

\subsection{Kac--Moody root systems}\label{subsubsection:KMRS}
The general reference for this subsection is \cite{Kac}. 
For the remainder of this section, we specialise to Kac--Moody groups, and fix a generalised Cartan matrix (GCM) $A=(a_{ij})_{i,j\in I}$ indexed by the finite set $I$, as well as a realisation $(\hh,\Pi,\Pi^{\vee})$ of $A$ in the sense of \cite[\S 1.1]{Kac}, with set of {\bf simple roots} $\Pi=\{\alpha_i \ | \ i\in I\}$, set of {\bf simple coroots} $\Pi^{\vee}=\{\alpha^{\vee}_i \ | \ i\in I\}$, and Cartan algebra $\hh=\Lambda^{\vee}\otimes_{\bZ}\bC$.

We can then consider the {\bf Kac--Moody algebra} $\g(A)$ associated with $A$; see \cite[\S 1.2--1.3]{Kac} for the construction.  If $A$ is a Cartan matrix, $\g(A)$ is the corresponding semisimple Lie algebra; otherwise, $\g(A)$ is an infinite-dimensional complex Lie algebra defined by the Serre presentation associated to $A$, in the same manner as a semisimple Lie algebra can be reconstructed from its Cartan matrix.  The Kac--Moody algebra $\g(A)$ admits a root space decomposition $\g(A)=\hh\oplus\bigoplus_{\alpha\in\Delta}\g_{\alpha}$ with respect to the adjoint action of the Cartan subalgebra $\hh$, with corresponding root spaces $$\g_{\alpha}:=\{x\in\g(A) \ | \ [h,x]=\alpha(h)x \ \forall h\in\hh\}$$ and root system $\Delta:=\{\alpha\in\hh^*\setminus\{0\} \ | \ \g_{\alpha}\neq\{0\}\}$. 

Set $Q_+:=\bigoplus_{i\in I}\bN\alpha_i$. We let $\Delta^+:=\Delta\cap Q_+$ denote the set of {\bf positive roots}, so that $\Delta=\Delta^+\dot{\cup}(-\Delta^+)$. The {\bf height} of a root $\alpha=\pm\sum_{i\in I}n_i\alpha_i$ is $\height(\alpha):=\pm\sum_{i\in I}n_i\in\bZ$. We also define the {\bf support} 
$$\supp(\alpha):=\{i\in I \ | \ n_i\neq 0\}\subseteq I$$
of $\alpha$. 
For a subset $J\subseteq I$, we set
$$\Delta(J):=\Delta\cap \bigoplus_{i\in J}\bZ\alpha_i, \quad \Delta^+(J):=\Delta^+\cap\Delta(J), \quad \textrm{and}\quad \Delta^+_J:=\Delta^+\setminus\Delta^+(J).$$

The {\bf Weyl group} $\WW=\WW_A$ of $A$ is the subgroup of $\GL(\hh^*)$ generated by the {\bf simple reflections} $s_i$ ($i\in I$) defined by
$$s_i\co\hh^*\to\hh^*: \alpha\mapsto \alpha-\la\alpha,\alpha_i^{\vee}\ra\alpha_i.$$
The pair $(\WW,S:=\{s_i \ | \ i\in I\})$ is then a Coxeter system (when convenient, we will also identify a subset $J\subseteq I$ with $\{s_i \ | \ i\in J\}\subseteq S$).  The parameters $m_{ij}$ ($i \neq j$) of the Coxeter presentation are given as follows:
\[
\begin{tabular} { c | c c c c c }
$a_{ij}a_{ji}$ & $0$ & $1$ & $2$ & $3$ &$\ge 4$ \\
\hline 
$m_{ij}$ & $2$ & $3$ & $4$ & $6$ & $\infty$
\end{tabular}
\]
We call $A$ {\bf indecomposable} if $\WW$ is irreducible. The indecomposable GCM $A$ can be of {\bf finite}, {\bf affine} or {\bf indefinite} type (see \cite[Chapter~4]{Kac}).  Accordingly, we call a subset $J\subseteq I$ of finite/affine/indefinite type if the GCM $A_J:=(a_{ij})_{i,j\in J}$ is of that type.

The Weyl group $\WW$ stabilises $\Delta$. One then defines the set of {\bf real} roots as $$\Delta^{re}:=\WW.\{\alpha_i \ | \ i\in I\}\subseteq\Delta.$$
To each $\alpha=w\alpha_i\in\Delta^{re}$ ($w\in\WW$, $i\in I$), one associates the {\bf reflection} $r_{\alpha}:=ws_iw\inv\in\WW$, which depends only on $\alpha$. 
One also sets $\Delta^{re+}:=\Delta^{re}\cap\Delta^+$ and, for any subset $J\subseteq I$, $$\Delta^{re}(J):=\Delta^{re}\cap\Delta(J),\quad \Delta^{re+}(J):=\Delta^{re+}\cap\Delta(J)\quad\textrm{and}\quad \Delta^{re+}_J:=\Delta^{re+}\cap\Delta_J.$$

For each $\alpha\in\Delta^{re}$, the fixed point set $\partial\alpha$ of $r_{\alpha}\in\WW$ in $\Sigma=\Sigma(\WW,S)$ is called a {\bf wall}, and we set $r_{\partial\alpha}:=r_{\alpha}$. The subspace $\Sigma\setminus\partial\alpha$ has two connected components, called {\bf half-spaces}, which we denote by $\alpha$ and $-\alpha$: this provides a $\WW$-equivariant identification between $\Delta^{re}$ and the set of half-spaces of $\Sigma$. Any infinite order element $w\in\WW$ acts on $\Sigma$ as a hyperbolic isometry, i.e. it acts by translations on some geodesic line (a {\bf $w$-axis}). Walls of $\Sigma$ are connected in the following strong sense: if $m$ is a wall intersecting a geodesic line $L$, then either $|L\cap m|=1$ (one says $m$ is {\bf transverse} to $L$) or $L\subseteq m$. A root $\alpha\in\Delta^{re}$ is {\bf $w$-essential} if $\partial\alpha$ is transverse to some (equivalently, any) $w$-axis.

\subsection{Kac--Moody groups}\label{subsubsection:KMG}

The general reference for this subsection is \cite{KMGbook}. Throughout, we fix a finite field $k=\bF_q$ of order $q$ and characteristic $p$. We also fix a Kac--Moody root datum 
\[
\mc{D}=(I,A,\Lambda,(c_i)_{i\in I},(h_i)_{i\in I})
\]
with GCM $A$. Thus, $\Lambda$ is a free $\bZ$-module containing the $c_i$'s, its $\bZ$-dual $\Lambda^{\vee}$ contains the $h_i$'s, and $\la c_j,h_i\ra=a_{ij}$ for all $i,j\in I$. The readers unfamiliar with Kac--Moody root data may safely assume that $\mc{D}=\mc{D}^A_{\Kac}$ (see \cite[Example~7.10]{KMGbook}), in which case $\mc{D}$ simply encodes the realisation $(\hh,\Pi,\Pi^{\vee})$ of $A$, where $\hh=\Lambda^{\vee}\otimes_{\bZ}\bC$, $\alpha_i=c_i\in\hh^*$ and $\alpha_i^{\vee}=h_i\in\hh$.

Let $\G_{\mc{D}}$ be the constructive Tits functor of type $\mc{D}$ introduced by Tits (\cite{Tits87}), and let $\cG:=\G_{\mc{D}}(k)$ be the corresponding {\bf minimal Kac--Moody group} over $k$. Thus $\cG$ is an amalgamated product of a {\bf torus} $T:=\Hom_{\mathrm{gr}}(\Lambda,k^{\times})$ exponentiating the Cartan subalgebra $\hh$, and of the {\bf real root groups} $U_{\alpha}\cong (k,+)$ ($\alpha\in\Delta^{re}$) exponentiating the real root spaces $\g_{\alpha}$. 
The Weyl group $\WW=\WW_A$ can be lifted to a subgroup $N$ of $\cG$ such that $N/T\cong\WW$. For each $\alpha\in\Delta^{re}$, there is a representative $\widetilde{r}_{\alpha}\in N$ of $r_{\alpha}$ such that
\begin{equation}\label{eqn:r_alpha_KM}
\widetilde{r}_{\alpha}\in U_{\alpha}U_{-\alpha}U_{\alpha},
\end{equation}
which we fix.
For any representative $\tilde{w}\in N$ of $w\in \WW$, we also have
\begin{equation}\label{eqn:wUalphawinv_KM}
\tilde{w}U_{\alpha}\tilde{w}\inv=U_{w\alpha}\quad\textrm{for all $\alpha\in\Delta^{re}$.}
\end{equation}
The subgroup $\cU^+$ of $\cG$ generated by all $U_{\alpha}$ with $\alpha\in\Delta^{re+}$ is normalised by $T$ (and intersects $T$ trivially), and $(\mathcal B^+:=T\cU^+,N)$ is a BN-pair for $\cG$. We denote by $X_+$ the associated (locally finite) building. The kernel of the action map $\rho\co \cG\to\Aut(X_+)$ is the centre $Z=\Z(\cG)\subseteq T$ of $\cG$. 

For a subset $J\subseteq I$, we define the subgroups
$$\cL_J^+:=\la U_{\alpha} \ | \ \alpha\in\Delta^{re}(J)\ra, \quad \cU^+(J):=\cL^+_J\cap \cU^+=\la U_{\alpha} \ | \ \alpha\in\Delta^{re+}(J)\ra$$ and $\cL_J:=T\cdot \cL_J^+$
of $\cG$. Note that $\cL_J^+$ contains representatives for each of the elements of $\WW_J$ by (\ref{eqn:r_alpha_KM}). We also let $\cU_J$ denote the normal closure in $\cU^+$ of the subgroup $\la U_{\alpha} \ | \ \alpha\in\Delta^{re+}_J\ra\subseteq \cU^+$. Following \cite[6.2.2]{theseBR}, the standard parabolic subgroup $\mathcal P_J$ of $\cG$ of type $J$ admits a semi-direct decomposition
\begin{equation}\label{eqn:Levidecmin}
\mathcal P_J=\cL_J\ltimes\cU_J.
\end{equation}

The (effective) {\bf geometric completion} of $\cG$ is the closure $G$ of $\cG/Z\cong\rho(\cG)$ in $\Aut(X_+)$, where $\Aut(X_+)$ is equipped with the topology of uniform convergence on bounded sets. In particular, $G$ possesses a basis of identity neighbourhoods consisting of open normal subgroups of $U^+:=\overline{\rho(\cU^+)}$ (namely, the pointwise stabilisers in $U^+$ of balls around the fundamental chamber $C_0$ of $X_+$). Under this topology, any sequence of root groups $(U_{\gamma_n})_{n\in\bN}$, $\gamma_n\in\Delta^{re}$ (where we identify a root group $U_{\gamma}$ with its image in $G$) such that $\height(\gamma_n)\stackrel{n\to\infty}{\to}\infty$ uniformly converges to $\{1\}$ in $G$ (see \cite[Lemma~7]{CaRe} or \cite[Proposition~8.96]{KMGbook}).

For a subset $J\subseteq I$, we consider the closed subgroups 
$$L_J^+:=\overline{\rho(\cL_J^+)}, \quad L_J:=\overline{\rho(\cL_J)}, \quad U^+(J):=\overline{\rho(\cU^+(J))} \quad\textrm{and}\quad U_J:=\overline{\rho(\cU_J)}$$
of $G$. We further consider the closed subgroup $\cH:=\rho(T)$ of $G$ (note that $T$, and hence also $\mathcal H$, is finite). Note that, since $\rho$ is injective on $\cU^+$, we can identify $\cU^+$ (resp. $\cU^+(J)$ or $\cU_J$) with a subgroup of $U^+$ (resp. $U^+(J)$ or $U_J$). As before, identifying $N$ with its (discrete and hence closed) image in $G$, the couple $(\cH U^+,N)$ is a BN-pair for $G$, with same associated building $X_+$.  In particular, $G$ has standard parabolic subgroups $P_J =\overline{\rho(\mathcal P_J)}$ for $J \subseteq I$.

\begin{lemma}\label{lemma:Levi_dec_geo}
Let $J\subseteq I$. Then the following assertions hold:
\begin{enumerate}
\item
$U_J$ is a compact normal subgroup of $P_J$, and $P_J=L_J\cdot U_J$.
\item
$L_J^+$ is normal in $L_J$, and $L_J=\mathcal H\cdot L_J^+$.
\item
If $J_1,\dots,J_n$ are the components of $J$, then each $L^+_{J_i}$ is normal in $L^+_J$ and $L^+_J=L^+_{J_1}\cdot\dots\cdot L^+_{J_n}$.
\item
$\mathcal L_J=\G_{\mc{D}(J)}(k)$, where $\mc{D}(J):=(J,A_J,\Lambda,(c_i)_{i\in J},(h_i)_{i\in J})$.
\item
$L_{J^\infty}^+\cdot U_J$ has finite index in $P_J$.
\item
The center of $L_J$ is contained in $\mathcal H\cdot U^+$.
\end{enumerate}
\end{lemma}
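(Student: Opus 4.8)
The plan is to derive each assertion from the corresponding algebraic statement about the minimal group $\cG$ (in particular the Levi decomposition (\ref{eqn:Levidecmin}) and $\cL_J=T\cL^+_J$) by passing to closures under $\rho$, establishing the parts in the order (1), (2), (4), (3), (5), (6). Three elementary facts are used throughout: (a) if $M\unlhd H\le\cG$ then $\ol{\rho(M)}\unlhd\ol{\rho(H)}$; (b) a product $CK$ of a closed subset $C$ with a compact subset $K$ of $G$ is closed, and is a subgroup whenever one factor normalises the other, so $\ol{\rho(CK)}=\ol{\rho(C)}\,\rho(K)$ when $K$ is finite; and (c) the stabiliser $B=\cH U^+$ of the fundamental chamber $C_0$ is compact open (the building $X_+$ being locally finite), whence $U^+$, and any closed subgroup of $G$ contained in it, is compact. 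I also use that for spherical $K\subseteq I$ the group $\cL^+_K$ is finite, since then $\Delta^{re}(K)$ is finite and each $U_\alpha\cong(k,+)$ is finite.

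Parts (1) and (2) are then immediate transfers. For (1), (\ref{eqn:Levidecmin}) gives $\mathcal P_J=\cL_J\ltimes\cU_J$ with $\cU_J\unlhd\mathcal P_J$ and $\cU_J\le\cU^+$; by (a) and (c), $U_J$ is a compact normal subgroup of $P_J$, and by (b) the subgroup $L_JU_J$ is closed, contains $\rho(\mathcal P_J)$ and lies in $P_J$, hence equals $P_J$. For (2), $\cL^+_J\unlhd\cL_J=T\cL^+_J$ since $T$ normalises each $U_\alpha$, so (a)--(b) give $L^+_J\unlhd L_J$ and, as $\cH=\rho(T)$ is finite, $L_J=\cH L^+_J$. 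For (4), the datum $\mc D(J)$ retains the lattice $\Lambda$, hence the torus $T=\Hom_{\mathrm{gr}}(\Lambda,k^\times)$, and the real roots of $A_J$ are exactly $\Delta^{re}(J)$; the amalgam presentation of the Tits functor then identifies $\G_{\mc D(J)}(k)$ with the subgroup of $\cG$ generated by $T$ and the $U_\alpha$ ($\alpha\in\Delta^{re}(J)$), namely $\cL_J$ (cf.\ \cite{KMGbook}).

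For (3), since supports of roots are connected, $\Delta^{re}(J)=\bigsqcup_i\Delta^{re}(J_i)$, and since distinct components are orthogonal no positive combination of a root in $\Delta^{re}(J_i)$ with one in $\Delta^{re}(J_{i'})$ ($i\neq i'$) is a root; the commutation relations then give $[\cL^+_{J_i},\cL^+_{J_{i'}}]=1$, so $\cL^+_J=\cL^+_{J_1}\cdots\cL^+_{J_n}$ with pairwise commuting normal factors. As commutation is a closed condition, the $L^+_{J_i}$ pairwise commute, so $Q:=L^+_{J_1}\cdots L^+_{J_n}$ is a subgroup with $\ol Q=L^+_J$; it remains to see that $Q$ is closed. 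Here I would use that the standard $J$-residue splits as a product $\mathfrak R_J\cong\prod_i\mathfrak R_{J_i}$ on which $L^+_{J_{i'}}$ acts trivially in the $J_i$-factor for $i'\neq i$; since $G$ acts properly on $X_+$, the product map $\prod_iL^+_{J_i}\to G$ is proper, and its image $Q$ is therefore closed. Part (5) now follows by combining (1)--(3): writing $J=J^{\sph}\sqcup J^\infty$ as a union of components, $P_J=\cH\,L^+_{J^{\sph}}\,L^+_{J^\infty}\,U_J$, where $\cH$ and $L^+_{J^{\sph}}$ are finite while $L^+_{J^\infty}U_J$ is a subgroup (as $U_J\unlhd P_J$); thus $L^+_{J^\infty}U_J$ has finite index in $P_J$.

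Finally, for (6) I would identify $L_J$ via (4) with the completed Kac--Moody group of $\mc D(J)$, carrying the inherited BN-pair with fundamental minimal parabolic $B_J=\cH U^+(J)=\cH U^+\cap L_J$; since $B=\cH U^+$ is the stabiliser of $C_0$, the claim reduces to $\Z(L_J)\le B_J$, i.e.\ every $z\in\Z(L_J)$ fixes $C_0$. Writing $z\in B_Jw_0B_J$ for the unique $w_0\in\WW_J$ furnished by the Bruhat decomposition of $L_J$, I would exploit that $z$ centralises every $U_\alpha$ with $\alpha\in\Delta^{re}(J)$: conjugation by $z$ fixes each $U_{\alpha_j}$ ($j\in J$), whereas writing $z=b_1\tilde w_0b_2$ with $b_i\in B_J$, the $B_J$-factors send positive real root groups into the positive unipotent while the $\tilde w_0$-factor applies the twist $U_\beta\mapsto U_{w_0\beta}$ from (\ref{eqn:wUalphawinv_KM}); matching then forces $w_0\alpha_j\in\Delta^+(J)$ for all $j\in J$, so $w_0=1$ and $z\in B_J\le\cH U^+$. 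The routine transfers (1), (2), (4), (5) present no difficulty; the two delicate points are the closedness of $Q$ in (3), which I expect to control through the product structure of $\mathfrak R_J$ and properness of the building action, and above all the centre computation (6), where the interaction between $B_J$-conjugation and the $w_0$-twist on root groups is the main obstacle and where I would concentrate on making the positivity bookkeeping rigorous.
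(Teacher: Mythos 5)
Your proposal is correct in substance and, for parts (1)--(5), follows essentially the paper's route: the paper outsources (1) and (2) to \cite[Lemma~3.1]{openKM} (your direct transfer-to-closures argument is a fine substitute), proves (3) by the same commutation of root groups supported in distinct components, takes (4) as holding by definition, and derives (5) exactly as you do from the finiteness of $\cH\cdot L^+_{J^{\sph}}=L_{J^{\sph}}$. There are two points of genuine divergence. First, in (3) you are \emph{more} careful than the paper, which does not comment on closedness of the product $L^+_{J_1}\cdots L^+_{J_n}$ at all; your properness idea does work, but not quite for the reason you state --- ``$G$ acts properly on $X_+$'' is not by itself enough, since the restriction map $L^+_{J_i}\to\Aut(R_{J_i})$ could a priori fail to be proper. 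What saves the argument is precisely the coordinate structure you set up: each $L^+_{J_{i'}}$ ($i'\neq i$) preserves the $J_i$-coordinate of every chamber of the $J$-residue, so if the products $g_1\cdots g_n$ range over a compact set, then each coordinate of $g_iC_0$ ranges over a finite set, whence each $g_i$ lies in finitely many cosets of the compact chamber stabiliser $L^+_{J_i}\cap \cH U^+$; this gives properness, hence closedness of the image. Second, for (6) the paper's argument is much shorter and avoids Bruhat cells entirely: the centre of $L_J$ is the kernel of the action of the Kac--Moody group $\cL_J$ on its own building, which embeds in $X_+$ as the standard $J$-residue; since that residue contains $C_0$, the centre fixes $C_0$ and so lies in its stabiliser $\cH U^+$. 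Your route can be completed but not as sketched: conjugation by $b_2\in B_J$ does not preserve individual root groups, so the $\tilde{w}_0$-twist is applied to $b_2U_{\alpha_j}b_2\inv$ rather than to $U_{\alpha_j}$, and extracting $w_0\alpha_j\in\Delta^+(J)$ from this would require genuine RGD-style bookkeeping. The clean way to finish within your framework is the standard BN-pair fact that $B_J$ is self-normalising in $L_J$: a central element $z$ normalises $B_J=L_J\cap\cH U^+$, hence $z\in B_J\le\cH U^+$, with no positivity matching needed.
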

\begin{proof}
(1) and (2) follow from \cite[Lemma~3.1]{openKM}. As the real root groups $U_{\alpha}$ and $U_{\beta}$ commute if $\supp(\alpha)$ and $\supp(\beta)$ lie in different components of $J$, the groups $L_{J_i}^+$ ($i=1,\dots,n$) pairwise commute, yielding (3). The statement (4) holds by definition. Since $P_J=(\mathcal H\cdot L^+_{J^{\sph}}\cdot L^+_{J^{\infty}})\cdot U_J$ by (1), (2) and (3), and $\mathcal H\cdot L^+_{J^{\sph}}=L_{J^{\sph}}$ is finite by (4), the statement (5) follows as well. Finally, the center $Z_J$ of $L_J$ (or equivalently, of $\cL_J$) is the kernel of the action of the Kac--Moody group $\cL_J$ (cf. (4)) on its associated building $X^+_J$, which is embedded in $X^+$ as the standard $J$-residue of $X^+$; in particular, $Z_J$ fixes the fundamental chamber $C_0$ of $X^+$, and hence is contained in its stabiliser $\mathcal H\cdot U^+$ in $G$.
\end{proof}

\subsection{Properties of $G$ as a \tdlc group}\label{subsection:POGAATDLCG}

We conclude this preliminary section on Kac--Moody groups by recording a few known properties of $G$ as a \tdlc group.

\begin{prop}\label{prop:KM_simple}
Assume that $A$ is indecomposable and of non-finite type.  Then $G$ is a compactly generated, nondiscrete \tdlc group. Moreover, the subgroup $G^{(1)}$ of $G$ topologically generated by the $U_{\alpha}$ ($\alpha\in\Delta^{re}$) is a finite index normal subgroup of $G$ which is topologically simple, and we have $G =\cH\cdot G^{(1)}$.
\end{prop}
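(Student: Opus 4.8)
The plan is to dispose of the soft \tdlc properties first, then to establish the decomposition $G=\cH\cdot G^{(1)}$ together with the normality and finite index of $G^{(1)}$, and finally topological simplicity, which is the crux. For the first assertions: since $k=\bF_q$ is finite, the building $X_+$ is locally finite, so $\Aut(X_+)$ with the topology of uniform convergence on bounded sets is a \tdlc group whose compact open subgroups are the fixators of balls around $C_0$; being closed in $\Aut(X_+)$, the group $G$ is \tdlc. It is nondiscrete because the compact open subgroup $U^+=\overline{\rho(\cU^+)}$ is infinite: as $A$ is of non-finite type, $\WW$ and hence $\Delta^{re+}$ are infinite, so picking nontrivial $u_n\in U_{\gamma_n}$ with $\height(\gamma_n)\to\infty$ yields distinct nontrivial elements converging to $1$, and $1$ is not isolated. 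For compact generation I would use that $\rho(\cG)$ is dense in $G$ and that the BN-pair presentation gives $\cG=\langle T,\cU^+,N\rangle$, with $N$ generated over $T$ by the finitely many $\widetilde r_{\alpha_i}\in U_{\alpha_i}U_{-\alpha_i}U_{\alpha_i}$; hence $G$ is topologically generated by the compact set $\cH U^+\cup\bigcup_{i\in I}\rho(U_{-\alpha_i})$. (Alternatively, $G$ acts properly and cocompactly on the Davis realisation of $X_+$, so compact generation is immediate, cf.\ \cite[\S2.2]{Growingtrees}.)

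Next I would establish $G=\cH\cdot G^{(1)}$. By definition $G^{(1)}$ is closed, and $U^+\le G^{(1)}$ because $\cU^+$ is generated by positive real root groups. By (\ref{eqn:r_alpha_KM}) each $\widetilde r_{\alpha}$ lies in $U_{\alpha}U_{-\alpha}U_{\alpha}\le G^{(1)}$, so $N\subseteq\cH\cdot G^{(1)}$, while $\rho(T)=\cH$. As $\cH$ is finite and normalises $G^{(1)}$ (the torus normalises each root group, and $\widetilde wU_{\alpha}\widetilde w\inv=U_{w\alpha}$ by (\ref{eqn:wUalphawinv_KM})), the set $\cH\cdot G^{(1)}$ is a closed subgroup; since it contains $\langle\cH,U^+,N\rangle\supseteq\rho(\cG)$, which is dense, we get $G=\cH\cdot G^{(1)}$. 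Normality of $G^{(1)}$ follows since conjugation by the generators $\cH$, $U^+$, $N$ of a dense subgroup permutes the root groups, again by (\ref{eqn:wUalphawinv_KM}); and then $[G:G^{(1)}]\le|\cH|<\infty$.

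The main obstacle is topological simplicity of $G^{(1)}$. Here I would note that $G^{(1)}\le\Aut(X_+)$ acts faithfully (the kernel $Z$ of the $\cG$-action having already been factored out in passing to $G$) and, one checks, strongly transitively with its induced BN-pair, on the building $X_+$, which is thick (each panel carries $q+1\ge3$ chambers), irreducible (as $A$ is indecomposable, $\WW$ is irreducible) and non-spherical (as $A$ is of non-finite type, $\WW$ is infinite). Topological simplicity is then a known deep property of complete Kac--Moody groups (see \cite{KMGbook}): its proof combines Tits' simplicity theorem for irreducible non-spherical BN-pairs --- which shows that any subgroup normalised by all the root groups is either trivial, by faithfulness, or contains $G^{(1)}=\langle U_\alpha\rangle$ --- with a topological upgrade showing that a nontrivial closed normal subgroup $M\trianglelefteq G^{(1)}$ must contain a full root group (driven there by conjugation by a hyperbolic element of $N$ using the contraction dynamics in the completion), whence $M=G^{(1)}$ by strong transitivity. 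I expect the concrete work to lie in verifying that $G^{(1)}$ still acts strongly transitively and in the passage from abstract to topological simplicity via the closure; beyond that, this step is a matter of citing the known simplicity theorem and confirming its hypotheses hold under indecomposability and non-finite type.
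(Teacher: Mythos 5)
Your proposal is correct and takes essentially the same route as the paper: the paper's entire proof is a citation (to \cite[Proposition~8.17]{KMGbook} for the \tdlc properties and to \cite[Lemma~9 and Proposition~11]{CaRe} for normality, finite index and topological simplicity), and you defer to the same known simplicity theorem at the only hard step. The routine verifications you spell out (local finiteness of $X_+$, nondiscreteness via root groups of unbounded height, compact generation, $G=\cH\cdot G^{(1)}$ with $\cH$ finite normalising $G^{(1)}$) are sound and match the standard arguments behind the cited results.
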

\begin{proof}
See e.g. \cite[Proposition~8.17]{KMGbook} for the first statement, and \cite[Lemma~9 and Proposition~11]{CaRe} for the second.
\end{proof}
Note that $G=G^{(1)}$ as soon as the Kac--Moody root datum $\mathcal D$ is chosen to be \emph{coadjoint} (see \cite[Definition~7.9 and Example~7.25]{KMGbook}), for instance when $G$ is of simply connected type (see \cite[Example~7.11]{KMGbook}). In general, one can associate to $\mathcal D$ the corresponding coadjoint Kac--Moody root datum $\mathrm{coad}(\mathcal D)$ (see \cite[Exercise~7.14]{KMGbook}), and $G^{(1)}$ is then the (effective) geometric completion of the minimal Kac--Moody group $\mathfrak G_{\mathrm{coad}(\mathcal D)}(k)$ of type $\mathrm{coad}(\mathcal D)$.

The open subgroups of $G$ have been classified up to finite index.

\begin{lemma}[{\cite[Theorem~3.3]{openKM}}]\label{lemma:openKM}
Let $O$ be an open subgroup of $G$. Then there exist $g\in G$, some essential subset $J\subseteq I$, and some spherical subset $J'\subseteq J^\perp$ such that $L^+_J U_{J\cup J^\perp}\subseteq gOg\inv\subseteq P_{J\cup J'}$. In particular, $gOg\inv$ has finite index in $P_{J\cup J'}$.
\end{lemma}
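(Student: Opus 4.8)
The plan is to determine the unique ``direction'' in which the open subgroup $O$ is unbounded, to encode it as a residue of the building $X_+$, and then to read off the sandwich from the Levi decomposition~(\ref{eqn:Levidecmin}) together with Lemma~\ref{lemma:Levi_dec_geo}. The first, cheap observation is that an open subgroup automatically contains a basic identity neighbourhood, namely the pointwise stabiliser of some ball around the fundamental chamber $C_0$; since $U_\gamma$ tends to $\{1\}$ as $\height(\gamma)\to\infty$, this already forces $U_\alpha\le O$ for every $\alpha\in\Delta^{re+}$ of sufficiently large height. I would record the combinatorial shadow of $O$ in the set $\Psi:=\{\alpha\in\Delta^{re}\mid U_\alpha\le O\}$, which contains all positive roots of large height and, $O$ being a group, is stable under the commutation and reflection relations among the root groups.

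The geometric core is to show that $O$ stabilises a residue of $X_+$ whose parabolic stabiliser is commensurable with $O$. Granting this, let $R$ be such a residue; conjugating by a suitable $g\in G$ (the $G$-action being transitive on residues of a fixed type) I may assume $R=R_K$ is the standard residue of some type $K\subseteq I$, so that $gOg\inv$ is a finite-index subgroup of $\mathrm{Stab}_G(R_K)=P_K$. Writing $J:=K^{\infty}$ and $J':=K^{\sph}$, one has $K=J\cup J'$ with $J$ essential; and since the spherical and non-spherical components of $\Gamma_K$ are distinct components, there is no edge between $J$ and $J'$, so $J'\subseteq J^\perp$ exactly as required.

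The sandwich would then drop out of the Levi structure. On one side, $gOg\inv\le P_K=P_{J\cup J'}$. On the other, openness gives $U_{J\cup J^\perp}\le gOg\inv$ (after a harmless adjustment of $g$ inside the compact unipotent radical $U_K$, using $U_{J\cup J^\perp}\le U_K$), while the essential Levi factor is captured as follows: since $J$ is essential, each irreducible factor of the Levi $\cL_J=\G_{\mc D(J)}(k)$ is of non-finite type, so Proposition~\ref{prop:KM_simple} identifies $L^+_J$ with a product of commuting topologically simple subgroups (the factors $(L_{J_i})^{(1)}$); in particular $L^+_J$ has no proper open subgroup of finite index. As $gOg\inv$ has finite index in $P_{J\cup J'}$, it meets $L^+_J$ in a finite-index open subgroup, whence $L^+_J\le gOg\inv$. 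This yields $L^+_J\,U_{J\cup J^\perp}\le gOg\inv\le P_{J\cup J'}$. The finite-index assertion is then immediate from Lemma~\ref{lemma:Levi_dec_geo}(5): as $J$ is essential, $(J\cup J')^{\infty}=J$, so $L^+_JU_{J\cup J'}$ has finite index in $P_{J\cup J'}$, and $L^+_JU_{J\cup J^\perp}$ in turn has finite index in $L^+_JU_{J\cup J'}$, the quotient being supported on the finitely many spherical directions of $J^\perp\setminus J'$.

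The step I expect to be the genuine obstacle is the one I granted: that $O$ stabilises a residue whose stabiliser is commensurable with $O$, together with the identification of its essential type $J$. Concretely this amounts to proving that $\Psi$ is the root set of a parabolic subgroup --- that its symmetric part $\Psi\cap(-\Psi)$ is exactly $\Delta^{re}(K)$ for a genuine type $K$, and that $O$ admits no root groups, nor more subtly any unbounded excursions, transverse to the walls bounding $R_K$. The topology by itself only controls the ``deep'' positive directions; turning this into control of a \emph{single} residue of the correct size requires input from the geometry of the Davis complex, the finiteness of spherical residues, and the rigidity of parabolic closures recorded in Lemma~\ref{lemma:prelim_parabolic}.
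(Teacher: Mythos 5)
There is a genuine gap, and you have named it yourself. Note first that the paper contains no internal proof of this lemma: it is imported verbatim from \cite[Theorem~3.3]{openKM}, so your attempt has to stand on its own. Your outline correctly locates the difficulty and then \emph{grants} exactly that step: that $O$ stabilises a residue whose stabiliser is commensurable with $O$, i.e.\ that up to conjugacy $O$ is a finite-index open subgroup of a standard parabolic $P_K$. But that statement essentially \emph{is} Theorem~3.3 of \cite{openKM}; everything else in the lemma is bookkeeping around it. The cheap topological observation only yields $U_\alpha\le O$ for $\alpha\in\Delta^{re+}$ of large height, and upgrading the root shadow $\Psi$ to a parabolic requires the substantial machinery of \cite{openKM}: parabolic closures of Weyl group elements and their rigidity (cf.\ Lemma~\ref{lemma:prelim_parabolic} and Lemma~\ref{lem:Deo} here), straight elements, and an analysis of which $w\in\WW$ admit representatives in $O$ and what their contraction groups generate. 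As written, your argument reduces the statement to itself.

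The steps you carry out after the granted one also contain errors. First, the claim that ``openness gives $U_{J\cup J^\perp}\le gOg\inv$ after a harmless adjustment of $g$ inside $U_K$'' fails: when $J\cup J^\perp\neq I$, the group $U_{J\cup J^\perp}$ is the closed normal closure in $U^+$ of root groups including the height-one groups $U_{\alpha_i}$ for $i\notin J\cup J^\perp$, which a finite-index open subgroup of $P_K$ has no reason to contain; moreover, since $U_{J\cup J^\perp}$ is normal in $U^+$ and $U_K\le U^+$, conjugating $g$ by elements of $U_K$ leaves $U_{J\cup J^\perp}$ invariant, so the proposed adjustment changes nothing. The correct mechanism (developed in Section~\ref{sec:mainKM} of this paper, independently of Lemma~\ref{lemma:openKM}, so non-circular) is via relative Tits cores: for $O$ open of finite index in $P:=P_{J\cup J'}$ one has $\con_G(h)\subseteq O$ for all $h\in O$, and $\con_G(h)=\con_G(h^m)$ for $m\ge 1$, whence $\ol{P^\dagger}\le O$; then Theorem~\ref{thm:rTc_w_J} gives $L^+_JU_{J\cup J^\perp}\le\ol{(P_J)^\dagger}$, and this requires the whole $J$-regular element apparatus, not just ``the Levi structure''. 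Second, your assertion that $L^+_J$ has no proper open finite-index subgroup is not immediate: $L^+_J$ has a finite centre $Z^+$, and topological simplicity of the factors of $L^+_J/Z^+$ only yields $HZ^+=L^+_J$ for a finite-index open $H\le L^+_J$; concluding $H=L^+_J$ needs topological perfectness of $L^+_J$, i.e.\ that its abelianisation (a quotient of $Z^+$) vanishes, which requires an argument and is sensitive at small $q$. These two defects are repairable with the paper's later results, but the central commensurability step remains entirely missing, so the proposal does not constitute a proof.
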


We also record for future reference the following lemma from \cite[Lemma~3.19]{openKM}: the statement below is a bit different from \cite[Lemma~3.19]{openKM}, the subgroup $O_1$ from Lemma~3.19 in  \emph{loc. cit.} being replaced by a subgroup $O$ satisfying the only two properties of $O_1$ (recalled at the beginning of the proof of \cite[Lemma~3.19]{openKM}) that are used in the proof of that lemma.
\begin{lemma}\label{lemma:Jprime}
Let $J\subseteq I$ be essential, and let $O$ be a subgroup of $G$ contained in $P_J$ and containing $L_J^+$. Then every subgroup $H$ of $G$ containing $O$ as a subgroup of finite index
is contained in some standard parabolic $P_{J\cup J'}$ of type $J\cup J'$, with $J'$ spherical and $J'\subseteq J^\perp$.
\end{lemma}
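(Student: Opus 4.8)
The plan is to study the action of $H$ on the building $X_+$ and reduce the statement to a finite permutation problem among parallel residues. Let $R$ be the standard $J$-residue, so that $P_J=\mathrm{Stab}_G(R)\supseteq O$. Writing $[H:O]=m$ and choosing coset representatives $h_1=1,h_2,\dots,h_m$, every $h\in H$ is of the form $h=h_io$ with $o\in O$, whence $hR=h_iR$; thus the $H$-orbit of $R$ is the finite set $\{h_1R,\dots,h_mR\}$, which $H$ permutes. Let $R^*$ be the smallest residue of $X_+$ containing $\bigcup_ih_iR$. Being canonically attached to this $H$-invariant family, $R^*$ is $H$-invariant, and since $R$ contains the fundamental chamber $C_0$ we have $C_0\in R^*$, so $R^*=R_{J''}$ is a \emph{standard} residue for some $J''\supseteq J$ and $H\subseteq P_{J''}$. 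Note that any valid $J'$ must satisfy $J\cup J'\supseteq J''$, so the whole task is to show that $J''=J\cup J'$ with $J'\subseteq J^\perp$ spherical.

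First I would show $J''\setminus J\subseteq J^\perp$. By Lemma~\ref{lemma:prelim_parabolic} there is $w\in\WW_J$ with $\Pc(w)=\WW_J$; lifting it through $\cL_J^+\subseteq L_J^+\subseteq O$ gives an infinite-order (hyperbolic) element $\tilde w\in O$, and since $L_J^+\cap h_iOh_i\inv$ has finite index in $L_J^+$ for each $i$, a fixed power $g:=\tilde w^n$ lies in $\bigcap_ih_iOh_i\inv$ and hence stabilises every $h_iR$; moreover $\Pc(w^n)=\WW_J$ by the same lemma. As $\WW_J$ is not contained in any proper parabolic subgroup of $\WW$ (the rank would strictly drop), no proper sub-residue of $R$ is $g$-invariant, so $\proj_R(h_iR)=R$. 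Since $\proj_R(h_iR)$ and $\proj_{h_iR}(R)$ are parallel, hence isomorphic, residues, $\proj_{h_iR}(R)$ has type $J$ and therefore equals $h_iR$; thus $R\parallel h_iR$. The parallel class of $R$ is controlled by $\N_\WW(\WW_J)=\WW_J\times\WW_{J^\perp}$ (Lemma~\ref{lem:Deo}): parallel $J$-residues lie in the standard $(J\cup J^\perp)$-residue and differ by elements of $\WW_{J^\perp}$. Hence $R^*\subseteq R_{J\cup J^\perp}$, so $K:=J''\setminus J\subseteq J^\perp$; in particular $\WW_{J''}=\WW_J\times\WW_K$, and the Davis realisation of $R_{J''}$ splits as a product $|Y_J|_{\CAT}\times|Z|_{\CAT}$, where $Y_J,Z$ are the buildings of types $\WW_J,\WW_K$.

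It remains to prove that $K$ is spherical, which is the main obstacle. Since $J\perp K$, the type-preserving action of $H\subseteq P_{J''}$ on $R_{J''}=Y_J\times Z$ respects the two factors; $L_J^+$ acts trivially on $Z$, and $O\subseteq P_J$ fixes the base chamber $c_0$ of $Z$, so the image $\overline H$ of $H$ in $\Aut(Z)$ satisfies $[\overline H:\overline O]<\infty$ with $\overline O$ fixing $c_0$. Consequently $\overline H$ has the finite chamber-orbit $\{c_1,\dots,c_m\}$ corresponding to the $h_iR$, and by minimality of $J''$ the smallest residue of $Z$ containing this orbit is all of $Z$. I would like to deduce that $K$ is spherical; but this does \emph{not} follow from finiteness of the orbit alone — already a nonspherical rank-$2$ residue is generated by three chambers lying at bounded distance — so the Bruhat--Tits fixed point produced on $|Z|_{\CAT}$ by the bounded orbit is by itself insufficient. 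This gap is precisely where I expect the argument to require the Kac--Moody structure rather than abstract building combinatorics.

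The resolution I would pursue exploits the Levi decomposition $P_{J''}=L_{J''}\ltimes U_{J''}$ with $U_{J''}$ compact (Lemma~\ref{lemma:Levi_dec_geo}(1)), together with $L_{J''}=\cH\cdot L_J^+\cdot L_K^+$ (Lemma~\ref{lemma:Levi_dec_geo}(2)--(3)); this compactness is the rigidity invisible at the level of $\Aut(Z)$. Writing $h_i\in P_{J''}=L_{J''}U_{J''}$ and reading the $\WW_K$-displacement $c_i$ off the $L_K^+$-component, the finite index $[H:O]<\infty$ together with compactness of $U_{J''}$ should confine the $c_i$ to a bounded, hence spherical, standard residue of $Z$: conjugating the noncompact $L_J^+$ by the compact part would otherwise yield an incommensurable subgroup. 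Formally, I would combine Lemma~\ref{lem:Deo}, applied inside $\WW_{J''}=\WW_J\times\WW_K$, with the boundedness of the $\overline H$-orbit, upgraded via $U_{J''}$-compactness to the statement that $\overline H$ stabilises a spherical residue of $Z$ containing $c_0$; by minimality of $J''$ this residue is all of $Z$, forcing $K$ spherical. This mirrors the spherical $J'\subseteq J^\perp$ appearing in the open-subgroup classification Lemma~\ref{lemma:openKM}, of which the present statement is the finite-index analogue. Setting $J':=K$ then gives $H\subseteq P_{J\cup J'}$ with $J'$ spherical and $J'\subseteq J^\perp$, as required.
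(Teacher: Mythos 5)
You cannot be compared against an internal argument here, because the paper does not actually reprove this lemma: it imports it wholesale from \cite[Lemma~3.19]{openKM}, remarking only that the proof there uses nothing about $O$ beyond $L_J^+\le O\le P_J$. So your proposal must stand on its own, and its first half largely does: the finite $H$-orbit $\{h_iR\}$, the standard residue $R^*=R_{J''}$, and the deduction $J''\setminus J\subseteq J^\perp$ via a power of a lift of $w$ with $\Pc(w)=\WW_J$ lying in $\bigcap_i h_iOh_i\inv$, combined with parallelism and Lemmas~\ref{lem:Deo} and \ref{lem:parallel}, is a sound outline matching the paper's Section~\ref{sec:conjugacy_parabolic} toolkit. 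One local error: your parenthetical justification ``$\WW_J$ is not contained in any proper parabolic subgroup of $\WW$ (the rank would strictly drop)'' is false as stated ($\WW_J\le\WW_{J\cup\{s\}}$ for any $s\notin J$); what you need is that an element with parabolic closure $\WW_J$ stabilises no proper subresidue of $R$, and transferring this from the Weyl group to the building action requires the apartment argument the paper runs in the proof of Theorem~\ref{thm:building_commensurate} (an element of $\mathrm{Stab}_G(A)$ acting as $w$ on a suitable apartment $A$), not the reason you give.

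The genuine gap is the one you yourself flag: sphericality of $K=J''\setminus J$. Your proposed patch is a sketch (``should confine\dots'', ``I would combine\dots'') rather than an argument, and its endgame is incoherent on its own terms. The correct use of the bounded orbit is: the finite $\ol{H}$-orbit of chambers in $Z$ gives a bounded orbit in the $\CAT$ Davis realisation, $\ol{H}$ fixes the circumcentre, and the carrier cell of that fixed point yields an $\ol{H}$-stable \emph{spherical} residue $\sigma$ of $Z$ --- this is the object one must work with, and it is different from the smallest residue containing the orbit. Your ``by minimality of $J''$ this residue is all of $Z$, forcing $K$ spherical'' conflates the two: minimality concerns the orbit-generated residue, which (as your own rank-two example shows) can be all of $Z$ without $Z$ being spherical, so nothing forces $K$ spherical along that route; meanwhile the compactness of $U_{J''}$, as you invoke it, plays no identifiable role in any step. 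Finally, even granting an $\ol{H}$-stable spherical residue $\sigma$, it need not contain the base chamber $c_0$ (the circumcentre can lie in a cell far from $c_0$), so you only obtain $H$ inside a \emph{conjugate} of $P_{J\cup J'}$, whereas the statement asserts containment in a \emph{standard} parabolic; showing the stable spherical residue can be taken through $c_0$, or otherwise conjugating back compatibly with $O\le P_J$, is precisely the remaining content, and it is not addressed. As written, the proof does not close.
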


Finally, we recall that, in many cases, $G$ is locally finitely generated, as the subgroup $U^+$ is topologically finitely generated (cf. Lemma~\ref{lem:fqt_local}(1)).

\begin{lemma}\label{lemma:loc_fintype}
The following assertions hold:
\begin{enumerate}
\item
Assume that $p>M_A:=\max_{i\neq j}|a_{ij}|$. Then $G$ is locally finitely generated.
\item
Assume that $A$ is $2$-spherical, i.e. $a_{ij}a_{ji}\leq 3$ for all distinct $i,j\in I$. Assume, moreover, that $q\geq 3$ if $M_A=2$ and that $q\geq 4$ if $M_A=3$. Then $G$ is locally finitely generated. 
\end{enumerate}
\end{lemma}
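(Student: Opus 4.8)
The plan is to reduce, in both cases, to showing that the single compact open subgroup $U^+ = \overline{\rho(\cU^+)}$ is topologically finitely generated. Once this is done, Lemma~\ref{lem:fqt_local}(1), applied with the compact open subgroup $U^+$, shows that \emph{every} compact open subgroup of $G$ is topologically finitely generated, which is precisely the statement that $G$ is locally finitely generated. To prove $U^+$ is topologically finitely generated I would establish the stronger assertion that $U^+$ is topologically generated by the finite family $\{U_{\alpha_i} \mid i\in I\}$ of simple root groups: since each $U_{\alpha_i}\cong (k,+)$ is finite and $I$ is finite, this exhibits a finite topologically generating set. Writing $V:=\overline{\grp{U_{\alpha_i}\mid i\in I}}$ and recalling that $\cU^+=\grp{U_\alpha\mid\alpha\in\Delta^{re+}}$ with $U^+=\overline{\rho(\cU^+)}$, it suffices to show $\rho(U_\gamma)\subseteq V$ for every $\gamma\in\Delta^{re+}$; for then $\rho(\cU^+)\subseteq V$, whence $U^+=\overline{\rho(\cU^+)}\subseteq V\subseteq U^+$.

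I would prove $\rho(U_\gamma)\subseteq V$ by induction on $\height(\gamma)$. The case $\height(\gamma)=1$ is the definition of $V$. For $\height(\gamma)\ge 2$ the idea is that, just as the positive nilpotent subalgebra $\nn^+=\bigoplus_{\alpha>0}\g_\alpha$ of the Kac--Moody algebra is generated by the simple root vectors via the Serre relations, the root group $U_\gamma$ can be produced from the Chevalley-type commutator relations between root groups $U_\alpha,U_\beta$ with $\alpha,\beta\in\Delta^{re+}$ of strictly smaller height and $\gamma\in\bN\alpha+\bN\beta$; these lie in $V$ by the inductive hypothesis, so the induction closes \emph{provided} $U_\gamma$ genuinely appears. (Alternatively, one could note that $U^+$ is pro-$p$ and invoke Lemma~\ref{lem:fqt_local}(2), reducing to finite quotient type; but the generation argument gives a finite generating set directly.)

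The crux, and the sole place the hypotheses enter, is to guarantee that the relevant integral structure constants in these commutator relations do not vanish modulo $p$, so that $U_\gamma$ is not lost. In case (i) the structure constants occurring are, up to sign, products of binomial coefficients whose entries are controlled by the Cartan integers and hence bounded by $M_A$; the hypothesis $p>M_A$ makes them nonzero modulo $p$ and the induction goes through. In case (ii) the $2$-sphericity of $A$ forces every rank-$2$ subset $\{i,j\}$ to be spherical, so each $U_\gamma$ with $|\supp(\gamma)|\le 2$ lives in a finite Chevalley-type unipotent group $\cU^+(\{i,j\})$, and a standard reduction expresses $\cU^+$ in terms of its root groups of rank $\le 2$; the field conditions $q\ge 3$ when $M_A=2$ and $q\ge 4$ when $M_A=3$ are exactly those excluding the degenerate small-field rank-$2$ situations (such as those involving $\mathrm{SL}_2(\bF_2)$, $\mathrm{Sp}_4(\bF_2)$, $\mathrm{G}_2(\bF_2)$ or $\mathrm{G}_2(\bF_3)$), under which the rank-$2$ root groups are generated by commutators of their rank-$1$ subgroups. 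I expect the verification of these structure-constant and rank-$2$ generation facts to be the main obstacle, and I would discharge it not by recomputing the constants but by invoking the known commutation relations in Tits' constructive functor $\G_{\mc D}$ (see \cite{Tits87,KMGbook}), respectively the known presentation/generation results for $2$-spherical Kac--Moody groups over sufficiently large fields, which is why the statement is phrased as a result to be recalled rather than reproved from scratch.
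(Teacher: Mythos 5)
Your proposal is correct and takes essentially the same route as the paper, whose proof is a pure citation: the reduction to topological finite generation of the single compact open subgroup $U^+$ via Lemma~\ref{lem:fqt_local}(1) is exactly the remark the paper makes just before the lemma, and the generation of $U^+$ by the simple root groups is precisely the content of the results it invokes (Capdeboscq--R\'emy and Rousseau for case (1), Abramenko--M\"uhlherr for the $2$-spherical case (2)). Your heuristic sketches of those external results --- induction on height with structure constants nonvanishing modulo $p$ when $p>M_A$, and exclusion of the degenerate small rank-$2$ residues such as $B_2(2)$, $G_2(2)$, $G_2(3)$ --- match their actual content, and you appropriately defer the verification to the same sources rather than reproving them.
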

\begin{proof}
(1) follows from \cite[Theorem~2.2]{RCap} (see also \cite[Proposition~6.11]{Rousseau}), and (2) from \cite[Corollary]{AbrM97}.
\end{proof}

\begin{remark}
Note that, since $U^+$ is pro-$p$ (see e.g. \cite[Proposition~8.17]{KMGbook}), Lemma~\ref{lem:fqt_local} implies that $G$ is locally finitely generated if and only if it is locally of finite quotient type.
\end{remark}

\section{Conjugacy classes of parabolic subgroups up to finite index}\label{sec:conjugacy_parabolic}

Given a Coxeter system $(\WW,S)$, let $\Lambda_{(\WW,S)}$ be the set of standard parabolic subgroups of $\WW$ ordered by inclusion, and let $\Lambda^{\infty}_{(\WW,S)}$ be the same poset taking subgroups up to finite index, that is, the quotient poset $\Lambda_{(\WW,S)}/\sim_f$, where for two subgroups $H_1,H_2$ of a group $H$, we write $H_1\sim_fH_2$ if $H_1\cap H_2$ has finite index in both $H_1$ and $H_2$. We denote by $[H]$ the class of $H\in \Lambda_{(\WW,S)}$ in $\Lambda^{\infty}_{(\WW,S)}$.

Given a group $G$ with a BN-pair and Weyl group $(\WW,S)$, the conjugacy classes of parabolic subgroups of $G$ (up to finite index) are accounted for by elements of $\Lambda_{(\WW,S)}$ (or $\Lambda^{\infty}_{(\WW,S)}$).  In this section, we show that, if $B$ is commensurated (that is, $B \sim_f gBg\inv$ for all $g \in G$), then the conjugation action of $G$ respects the partial order of $\Lambda^{\infty}_{(\WW,S)}$; in other words, if one standard parabolic is virtually contained in a conjugate of another standard parabolic, the standard parabolics were already ordered in this way up to finite index.  In the case of complete geometric Kac--Moody groups, this will lead to a refinement of Lemma~\ref{lemma:openKM}.

\begin{theorem}\label{thm:building_commensurate}
Let $G$ be a group with a BN-pair $(B,N)$ with Weyl group $(\WW,S)$, such that $B$ is a commensurated subgroup of $G$ and $|S| < \infty$; given $J \subseteq S$, let $P_J$ be the standard parabolic subgroup $B\WW_JB$ of $G$.  Let $J,J' \subseteq S$.

Suppose $g \in G$ is such that $gP_{J}g\inv$ is virtually contained in $P_{J'}$.  Then $[\WW_{J}] \le [\WW_{J'}]$ as elements of $\Lambda^{\infty}_{(\WW,S)}$; we have $[\WW_J] = [\WW_{J'}]$ if and only if $gP_Jg\inv \sim_f P_{J'}$.  Conversely, if $[\WW_J] \le [\WW_{J'}]$ then $P_J$ is virtually contained in $P_{J'}$.
\end{theorem}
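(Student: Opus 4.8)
The plan is to prove the three claims of Theorem~\ref{thm:building_commensurate} in a logical order, starting with the easier ``converse'' direction and then tackling the harder forward direction.

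\emph{The converse direction.} Suppose $[\WW_J] \le [\WW_{J'}]$ in $\Lambda^{\infty}_{(\WW,S)}$. By the identification of this poset with essential subsets (Lemma~\ref{lemma:prelim_parabolic} and the surrounding discussion), $[\WW_J]\le[\WW_{J'}]$ should mean that $\WW_J$ is virtually contained in $\WW_{J'}$, i.e. $\WW_J\cap\WW_{J'}$ has finite index in $\WW_J$. I would first establish the combinatorial fact that for standard parabolics, $\WW_J\cap\WW_{J'}=\WW_{J\cap J'}$, so this condition is equivalent to $J^\infty\subseteq J'$ (the essential part of $J$ sits inside $J'$). Given this, I want to show $P_J$ is virtually contained in $P_{J'}$. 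The cleanest route is via the index formula: $[P_J:P_J\cap P_{J'}]$ can be computed using the Bruhat decomposition, and $P_J\cap P_{J'}=P_{J\cap J'}$, so the index is the number of cosets $\WW_{J\cap J'}w$ in $\WW_J$ weighted by $B$-double-coset sizes. Since $B$ is commensurated (equivalently here, since the relevant building residues are locally finite / $B$ has finite index issues controlled), finiteness of $[\WW_J:\WW_{J\cap J'}]$ should transfer to finiteness of $[P_J:P_J\cap P_{J'}]$. This step is mostly bookkeeping once the parabolic intersection identity is in hand.

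\emph{The forward direction.} Now suppose $gP_Jg\inv$ is virtually contained in $P_{J'}$. The goal is $[\WW_J]\le[\WW_{J'}]$. The key geometric object is the building $X$: $P_J$ is the stabiliser of the standard $J$-residue $R_J$, and $gP_Jg\inv$ is the stabiliser of the residue $gR_J$. Virtual containment of stabilisers translates into a commensurability relation between the setwise stabilisers, and I would exploit the action on the building together with the hypothesis that $B$ is commensurated. The plan is to pass to the Weyl group by analysing the projection $\proj_{R_{J'}}$ of $gR_J$ onto $R_{J'}$, or equivalently to study the parabolic closure of the relevant Weyl-group data. Concretely, I expect to choose an infinite-order element $w$ with $\Pc(w)=\WW_{J^\infty}$ (which exists by Lemma~\ref{lemma:prelim_parabolic} since $J^\infty$ is essential), lift it into $gP_Jg\inv$, and use that its axis/translation behaviour is constrained to lie within the geometry governed by $P_{J'}$; virtual containment forces the walls crossed to be $J'$-walls, yielding $J^\infty\subseteq$ (a conjugate of) $J'$, and then Deodhar's rigidity (Lemma~\ref{lem:Deo}) removes the conjugation, giving $J^\infty\subseteq J'$.

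\emph{The equality refinement.} For the middle claim, if additionally $[\WW_J]=[\WW_{J'}]$, I would argue by symmetry: $[\WW_J]\le[\WW_{J'}]$ and $[\WW_{J'}]\le[\WW_J]$ both hold, so $J^\infty$ and $J'^\infty$ determine the same essential subset up to the Deodhar-conjugacy of Lemma~\ref{lem:Deo}. Combining the virtual containment $gP_Jg\inv\le_f P_{J'}$ already assumed with the reverse virtual containment obtained from applying the forward direction to $g\inv$, one gets $gP_Jg\inv\sim_f P_{J'}$; conversely $gP_Jg\inv\sim_f P_{J'}$ immediately gives both virtual containments and hence $[\WW_J]=[\WW_{J'}]$.

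\emph{Main obstacle.} The hard part will be the forward direction, specifically converting the group-theoretic virtual containment $gP_Jg\inv\le_f P_{J'}$ into the combinatorial statement $J^\infty\subseteq J'$. The subtlety is that $g$ need not normalise anything and need not lie in $N$, so I cannot directly read off Weyl-group data; the commensuration hypothesis on $B$ is exactly what lets finiteness of indices at the group level be detected at the level of the Weyl group (via finiteness of orbits on chambers). Handling the non-essential (spherical) part of $J$ correctly — ensuring it contributes only a finite-index discrepancy and does not obstruct the poset inequality — is where I expect the technical care to concentrate.
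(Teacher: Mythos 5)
Your converse direction and the main forward implication follow essentially the paper's route. For the converse, the paper likewise reduces to essential $J, J'$ by observing that commensuration of $B$ makes $P_{J^{\sph}} = B\WW_{J^{\sph}}B$ a finite union of $B$-cosets (your double-coset bookkeeping with $P_J \cap P_{J'} = P_{J \cap J'}$ is a slightly different packaging of the same local-finiteness input). For the forward direction, the paper also picks $v \in \WW_J$ with $\Pc(v^n) = \WW_J$ for all $n \neq 0$ (Lemma~\ref{lemma:prelim_parabolic}), lifts it to an element of $gP_Jg\inv$ stabilising an apartment meeting both $gR_J$ and $R_{J'}$, powers it into $P_{J'}$, and concludes $J \cap wJ'w\inv = J$ — via residue projections (\cite[Proposition~21.8]{MPW15}) rather than your wall-crossing phrasing, but with the same mechanism and the same final appeal to Deodhar rigidity (Lemma~\ref{lem:Deo}, packaged as Lemma~\ref{lem:parabolic_inclusion}).

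The genuine gap is in the middle claim, in the direction ``$[\WW_J] = [\WW_{J'}]$ implies $gP_Jg\inv \sim_f P_{J'}$''. Your proposed derivation --- the reverse virtual containment ``obtained from applying the forward direction to $g\inv$'' --- does not parse: the forward direction takes a virtual containment as \emph{hypothesis} and outputs a poset inequality; it cannot manufacture the containment of $g\inv P_{J'} g$ in $P_J$ up to finite index, which is precisely what needs proving. What your converse direction yields from $[\WW_J] = [\WW_{J'}]$ is only the \emph{unconjugated} statement $P_J \sim_f P_{J'}$; combined with the standing hypothesis this gives that $gP_Jg\inv$ is virtually contained in $P_J$, a one-sided relation between a parabolic and a conjugate of itself, and such a relation is not automatically symmetric (finite index of the intersection on one side does not give finite index on the other --- conjugates of parabolics are not commensurated in general). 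This is where the paper does real geometric work: taking $J = J'$ essential, it shows via \cite[Proposition~21.8]{MPW15} that $\proj_{R_{J'}}(gR_J)$ is a subresidue of $R_{J'}$ of type $J' \cap w\inv J w = J'$, hence equals $R_{J'}$; together with $\proj_{gR_J}(R_{J'}) = gR_J$ from the forward argument, the residues $gR_J$ and $R_{J'}$ are \emph{parallel}, and Lemma~\ref{lem:parallel}(ii) --- parallel residues have commensurate stabilisers, proved by embedding both in a residue of type $J \cup J^\perp$, splitting it as a product of buildings $Y \times Z$, and using local finiteness coming from commensuration of $B$ --- then delivers $gP_Jg\inv \sim_f P_{J'}$. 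Nothing in your outline plays the role of this parallelism/product argument, so the ``if'' half of the equality claim is unproven as it stands.
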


We begin the proof with two lemmas.  The conjugation action of $\WW$ itself on $\Lambda^{\infty}_{(\WW,S)}$ is accounted for by Lemma~\ref{lem:Deo}.

\begin{lemma}\label{lem:parabolic_inclusion}
Let $(\WW,S)$ be a Coxeter system.  Then the following are equivalent, for $J,J' \subseteq S$:
\begin{enumerate}[(i)]
\item $[\WW_J]\leq [\WW_{J'}]$;
\item $J^{\infty} \subseteq (J')^{\infty}$;
\item There is $w \in \WW$ such that $wJ'w\inv \cap J$ generates a subgroup of finite index in $\WW_J$.
\end{enumerate}
\end{lemma}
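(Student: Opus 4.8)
The plan is to prove the cycle of implications $(i) \Rightarrow (ii) \Rightarrow (iii) \Rightarrow (i)$, relying on the structure of parabolic closures and Deodhar's description of conjugacy (Lemma~\ref{lem:Deo}), together with the key fact from Lemma~\ref{lemma:prelim_parabolic} that an essential $\WW_J$ has no proper parabolic subgroup of finite index. Throughout I use that for an essential subset $J$, the subgroup $\WW_J$ is the \emph{essential part} of the parabolic $\WW_J$, and more generally that $\WW_J \sim_f \WW_{J^\infty}$ since the spherical part $\WW_{J^{\sph}}$ is finite (so $\WW_J = \WW_{J^\infty} \times \WW_{J^{\sph}}$ up to the obvious commuting-product decomposition, whence $[\WW_J] = [\WW_{J^\infty}]$ in $\Lambda^\infty_{(\WW,S)}$).

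For $(i) \Rightarrow (ii)$: suppose $[\WW_J] \le [\WW_{J'}]$, so there is a standard parabolic $\WW_{J''}$ with $\WW_{J''} \subseteq \WW_{J'}$ and $\WW_{J''} \sim_f \WW_J$. Since $\WW_{J''} \subseteq \WW_{J'}$ forces $J'' \subseteq J'$ (standard parabolics are ordered by their defining subsets), and $\WW_{J''} \sim_f \WW_J$ gives $[\WW_{J''}] = [\WW_J]$, I first reduce to comparing essential parts. The containment $\WW_{J''} \subseteq \WW_{J'}$ immediately yields $(J'')^\infty \subseteq (J')^\infty$ at the level of essential parts (an essential component of $J''$ sits inside $J'$ and remains non-spherical there, hence lies in $(J')^\infty$). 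So it remains to check that $\WW_{J''} \sim_f \WW_J$ implies $(J'')^\infty = J^\infty$. This is where Lemma~\ref{lemma:prelim_parabolic} enters: taking $w$ with $\Pc(w) = \WW_{J^\infty}$, the finite-index condition forces $w$ (or a power) to lie in $\WW_{J''}$, and since $\WW_{J''}$ has no proper finite-index parabolic the parabolic closure $\Pc(w) = \WW_{(J'')^\infty}$ must coincide with $\WW_{J^\infty}$, giving $J^\infty = (J'')^\infty \subseteq (J')^\infty$.

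For $(ii) \Rightarrow (iii)$: assume $J^\infty \subseteq (J')^\infty$. Taking $w = 1$, the subgroup $J'w\inv \cap J = J' \cap J$ already contains $J^\infty$ up to the relevant intersection; more carefully, $\WW_{J' \cap J} \supseteq \WW_{J^\infty \cap J'} = \WW_{J^\infty}$ (using $J^\infty \subseteq (J')^\infty \subseteq J'$ and $J^\infty \subseteq J$), which has finite index in $\WW_J$ since $[\WW_{J^\infty}] = [\WW_J]$. Thus condition (iii) holds even with $w = 1$, and this direction is essentially immediate. For $(iii) \Rightarrow (i)$: if $wJ'w\inv \cap J$ generates a finite-index subgroup of $\WW_J$, then $\WW_J \sim_f \WW_{wJ'w\inv \cap J} \subseteq \WW_{wJ'w\inv} = w\WW_{J'}w\inv$, so $[\WW_J] \le [w\WW_{J'}w\inv]$; since $\Lambda^\infty_{(\WW,S)}$ records conjugacy-invariant data (by Lemma~\ref{lem:Deo}, conjugation preserves essential parts, hence $[w\WW_{J'}w\inv] = [\WW_{J'}]$), we conclude $[\WW_J] \le [\WW_{J'}]$.

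The main obstacle I anticipate is the precise handling of essential parts in $(i)\Rightarrow(ii)$: one must be careful that a \emph{finite-index} (not equal) relation $\WW_{J''} \sim_f \WW_J$ genuinely pins down the essential part $J^\infty$ rather than merely some conjugate of it. The cleanest route is via parabolic closures — the parabolic closure of an element $w$ with $\Pc(w) = \WW_{J^\infty}$ is intrinsic and insensitive to passing to finite-index overgroups (by Lemma~\ref{lemma:prelim_parabolic}, $\Pc(w^n) = \WW_{J^\infty}$ for all $n \neq 0$), so whenever such a $w$ lands in a finite-index subgroup its closure is forced to sit inside, and the no-proper-finite-index-parabolic property of essential $\WW_{J^\infty}$ then upgrades an inclusion of essential parts to an equality. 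Invoking Lemma~\ref{lem:Deo} at the end to discharge the conjugation-invariance of $[\WW_{J'}]$ ties the two remaining directions together cleanly.
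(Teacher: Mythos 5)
Your directions (i)$\Rightarrow$(ii) and (ii)$\Rightarrow$(iii) are sound and essentially the paper's own argument: reduce to virtual containment, take $v$ with $\Pc(v^n)=\WW_{J^\infty}$ for all $n\neq 0$ (Lemma~\ref{lemma:prelim_parabolic}) so that some power lands in the finite-index subgroup, forcing $\WW_{J^\infty}\le \WW_{J'}$, and observe that the non-spherical components of $J^\infty$ must lie in $(J')^\infty$; and (ii)$\Rightarrow$(iii) with $w=1$ is immediate since $\WW_{J^\infty}$ has finite index in $\WW_J$. (Your reading of (i) via a witness $\WW_{J''}\subseteq \WW_{J'}$ with $\WW_{J''}\sim_f \WW_J$ fixes one representative of $[\WW_{J'}]$, but this is harmless: a general witness $\WW_{J''}\subseteq \WW_{K'}$ with $\WW_{K'}\sim_f \WW_{J'}$ is handled by the same parabolic-closure argument applied on both sides.)

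The genuine gap is in your (iii)$\Rightarrow$(i). You pass from $\WW_J\sim_f \WW_{wJ'w\inv\cap J}\le w\WW_{J'}w\inv$ to ``$[\WW_J]\le[w\WW_{J'}w\inv]=[\WW_{J'}]$'', justified by ``conjugation preserves essential parts''. But $w\WW_{J'}w\inv$ is not a standard parabolic, so $[w\WW_{J'}w\inv]$ is not an element of $\Lambda^\infty_{(\WW,S)}$ at all, and the underlying commensurability claim $w\WW_{J'}w\inv\sim_f\WW_{J'}$ is false in general: in the universal Coxeter group $\WW=\langle s_1,s_2,s_3\rangle$ with all $m_{ij}=\infty$ and $J'=\{s_1,s_2\}$, the free-product normal form gives $s_3\WW_{J'}s_3\cap\WW_{J'}=\triv$ while $\WW_{J'}$ is infinite dihedral. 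Lemma~\ref{lem:Deo} only controls conjugations that carry an \emph{essential} set back into $S$; the general principle that virtual containment in a \emph{conjugate} of $\WW_{J'}$ implies the poset relation is precisely the content of (iii)$\Rightarrow$(i) (and, at the level of $G$, of Theorem~\ref{thm:building_commensurate}), so invoking it here begs the question. The repair is short and uses exactly the tools you deployed in (i)$\Rightarrow$(ii): set $K:=wJ'w\inv\cap J\subseteq S$; since $\WW_K$ has finite index in $\WW_J$, a power of $v$ (with $\Pc(v^n)=\WW_{J^\infty}$ for all $n\neq 0$) lies in $\WW_K$, whence $\WW_{J^\infty}\le\WW_K$ and $J^\infty\subseteq wJ'w\inv$. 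Thus $w\inv J^\infty w\subseteq J'\subseteq S$, and now Lemma~\ref{lem:Deo} applies legitimately to the essential set $J^\infty$, giving $w\inv J^\infty w=J^\infty$; hence $J^\infty\subseteq J'$, so $\WW_{J'}$ contains $\WW_{J^\infty}$, a finite-index subgroup of $\WW_J$, and (i) follows --- which is the paper's route.
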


\begin{proof}
Suppose (i) holds, so that $\WW_J$ is contained in a subgroup $H$ of $W$ such that $H\cap\WW_{J'}$ has finite index in $H$. Then $\WW_J\cap\WW_{J'}$ has finite index in $\WW_J$, and hence $\WW_J$ is virtually contained in $\WW_{J'}$. Choosing $w \in \WW_J$ such that $\Pc(w^n) = \WW_{J^\infty}$ for all $n \neq 0$ as in Lemma~\ref{lemma:prelim_parabolic}, we then have $w^n \in \WW_{J'}$ for some $n \neq 0$, so $\WW_{J^{\infty}} \le \WW_{J'}$ and hence $J^{\infty} \subseteq J'$.  As the components of $J^{\infty}$ cannot be contained in spherical components of $J'$, we conclude that $J^{\infty} \subseteq (J')^{\infty}$.  Thus (i) implies (ii).

If (ii) holds, then clearly (iii) holds with $w=1$, as $\WW_{J^{\infty}}$ has finite index in $\WW_J$.

Finally, if (iii) holds, then $W_{wJ'w\inv\cap J^{\infty}}$ has finite index in $W_{J^{\infty}}$ and hence $J^{\infty}\subseteq wJ'w\inv$ by Lemma~\ref{lemma:prelim_parabolic}. In particular, $w\inv J^{\infty}w\subseteq J'\subseteq S$ and hence Lemma~\ref{lem:Deo} yields $w\inv J^{\infty}w=J^{\infty}$. Thus,  $J^{\infty} \subseteq J'$, so $\WW_{J'}$ contains the finite index subgroup $\WW_{J^{\infty}}$ of $\WW_J$, showing that (i) holds.
\end{proof}

\begin{corollary}\label{cor:parabolic_inclusion}
Let $(\WW,S)$ be a Coxeter system, let $\Lambda^{\infty}_{(\WW,S)}$ be the set of standard parabolic subgroups of $\WW$ ordered by inclusion, modulo finite index, and let $E$ be the set of essential subsets of $S$ ordered by inclusion.  Then the following map is a well-defined order isomorphism:
\[
\Lambda^{\infty}_{(\WW,S)} \rightarrow E ; \quad [\WW_J] \mapsto J^\infty.
\]
\end{corollary}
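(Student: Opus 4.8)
The plan is to read off the entire statement from the equivalence (i) $\Leftrightarrow$ (ii) of Lemma~\ref{lem:parabolic_inclusion}, namely that $[\WW_J] \leq [\WW_{J'}]$ holds if and only if $J^{\infty} \subseteq (J')^{\infty}$. Before invoking it, I would first confirm that the proposed map actually takes values in $E$. Since $J^{\infty}$ is by definition the union of the non-spherical components of $\Gamma_J$, each component of $J^{\infty}$ is itself non-spherical, whence $(J^{\infty})^{\infty} = J^{\infty}$; that is, $J^{\infty}$ is essential, so $[\WW_J] \mapsto J^{\infty}$ indeed lands in $E$.

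Next I would establish well-definedness and injectivity in one stroke. In $\Lambda^{\infty}_{(\WW,S)}$ we have $[\WW_J] = [\WW_{J'}]$ precisely when both $[\WW_J] \leq [\WW_{J'}]$ and $[\WW_{J'}] \leq [\WW_J]$ hold; applying Lemma~\ref{lem:parabolic_inclusion} in each direction, this is equivalent to $J^{\infty} \subseteq (J')^{\infty}$ together with $(J')^{\infty} \subseteq J^{\infty}$, i.e.\ to $J^{\infty} = (J')^{\infty}$. Thus the image is independent of the chosen representative standard parabolic (well-definedness), and conversely equal images force equal classes (injectivity).

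Finally, the order-isomorphism property follows directly: the biconditional in Lemma~\ref{lem:parabolic_inclusion} is exactly the statement that $[\WW_J] \leq [\WW_{J'}]$ if and only if $J^{\infty} \subseteq (J')^{\infty}$, so the map is an order embedding. Surjectivity is immediate, since any essential subset $K \subseteq S$ satisfies $K^{\infty} = K$ by definition, and hence is the image of $[\WW_K]$. I do not anticipate a genuine obstacle here, as Lemma~\ref{lem:parabolic_inclusion} already carries the full weight of the argument; the only point demanding a moment's attention is the verification that $J^{\infty}$ is essential, ensuring the stated codomain $E$ is correct.
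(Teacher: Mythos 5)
Your proposal is correct and matches the paper's intended argument: the corollary is stated there without proof precisely because it follows immediately from the equivalence (i)$\Leftrightarrow$(ii) of Lemma~\ref{lem:parabolic_inclusion}, which is exactly the engine you use. Your added checks --- that $(J^{\infty})^{\infty} = J^{\infty}$ so the map lands in $E$, and that $K^{\infty}=K$ for essential $K$ gives surjectivity --- are the right routine verifications spelled out.
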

We now obtain some conditions under which one parabolic subgroup of a group with a BN-pair is virtually contained in another.

\begin{lemma}\label{lem:parallel}
Let $G$ be a group with a BN-pair with Weyl group $(\WW,S)$, such that $B$ is a commensurated subgroup of $G$; let $X$ be the associated building.
\begin{enumerate}[(i)]
\item Given $J,J' \subseteq S$, then $P_J$ is virtually contained in $P_{J'}$ if and only if $[\WW_J] \le [\WW_{J'}]$.
\item Let $R$ and $R'$ be a pair of parallel residues in $X$.  Then $\mathrm{Stab}_G(R) \sim_f \mathrm{Stab}_G(R')$.
\end{enumerate}
\end{lemma}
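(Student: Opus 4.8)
The plan is to prove (i) by a direct index computation exploiting commensuration of $B$, and then to reduce (ii) to (i) together with Lemma~\ref{lem:Deo} and a single input from the theory of parallel residues.

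For (i), I would first note that the subgroups of $G$ containing $B$ form a lattice isomorphic to the lattice of subsets of $S$, so $P_J\cap P_{J'}=P_{J\cap J'}$; hence ``$P_J$ virtually contained in $P_{J'}$'' means exactly $[P_J:P_{J\cap J'}]<\infty$. By Lemma~\ref{lem:parabolic_inclusion}, the condition $[\WW_J]\le[\WW_{J'}]$ is equivalent to $J^\infty\subseteq J'$, i.e. (as $J^\infty\subseteq J$ always) to $J^\infty\subseteq J\cap J'$. Writing $K:=J\cap J'\subseteq J$, it therefore suffices to show that for $K\subseteq J$ one has $[P_J:P_K]<\infty$ if and only if $J^\infty\subseteq K$. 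Using the decomposition $P_J=\bigsqcup_{u\in{}^K\WW_J}P_KuB$ into $(P_K,B)$-double cosets (indexed by minimal representatives of $\WW_K\backslash\WW_J$), the number of left $P_K$-cosets in $P_KuB$ equals $[B:B\cap u^{-1}P_Ku]$; since $B\subseteq P_K$ this is bounded by $[B:B\cap u^{-1}Bu]$, which is finite because $B$ is commensurated. Thus $[P_J:P_K]$ is a sum of finite positive terms, so it is finite iff $|{}^K\WW_J|=[\WW_J:\WW_K]$ is finite, and the latter holds iff $J^\infty\subseteq K$ (if $J^\infty\subseteq K$ then $\WW_K\supseteq\WW_{J^\infty}$ is of finite index in $\WW_J$; otherwise Lemma~\ref{lemma:prelim_parabolic} forces infinite index).

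For (ii), conjugating $(R,R')$ by a suitable $g\in G$ (which preserves the $\sim_f$-question) I may assume $R=R_J$ is standard, so $\mathrm{Stab}_G(R)=P_J$; a further conjugation by an element of $B$ lets me assume $\proj_{R'}(C_0)=\tilde wC_0$ for a lift $\tilde w\in N$ of $w:=\delta(R,R')$, so that $R'=\tilde wR_K$ and $\mathrm{Stab}_G(R')=\tilde wP_K\tilde w^{-1}$, where $K$ is the type of $R'$. The one building-theoretic input I would invoke is that parallel residues satisfy $w\WW_Kw^{-1}=\WW_J$. Passing to essential parts (preserved by this isomorphism) and applying Lemma~\ref{lem:Deo} gives $J^\infty=K^\infty=:E$ and $w\in\N_{\WW}(\WW_E)=\WW_E\times\WW_{E^\perp}$; parallelism moreover gives $\proj_{R}(\proj_{R'}(C_0))=C_0$, i.e. $w\in{}^J\WW\subseteq{}^E\WW$, and this minimality kills the $\WW_E$-component of $w$, so in fact $w\in\WW_{E^\perp}$.

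Now I chain commensurabilities. By part (i), $[P_J:P_E]<\infty$ and $[P_K:P_E]<\infty$ (since $J^\infty=K^\infty=E\subseteq E$), so $P_J\sim_f P_E$ and $\tilde wP_K\tilde w^{-1}\sim_f\tilde wP_E\tilde w^{-1}$; it thus suffices to prove $P_E\sim_f\tilde wP_E\tilde w^{-1}$ for $w\in\WW_{E^\perp}$. Writing $w=s_1\cdots s_\ell$ reduced with all $s_i\in E^\perp$, and using transitivity of $\sim_f$ and its invariance under conjugation, this reduces to the single-generator claim $P_E\sim_f\tilde sP_E\tilde s^{-1}$ for $s\in E^\perp$. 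This is the crux, and it is where I expect the real work to lie: because $s\in E^\perp$, the vertex $s$ is an isolated, hence spherical, component of $E\cup\{s\}$, so $(E\cup\{s\})^\infty=E$ and part (i) gives $[P_{E\cup\{s\}}:P_E]<\infty$; since $\tilde s\in BsB\subseteq P_{E\cup\{s\}}$ we get $\tilde sP_E\tilde s^{-1}\le P_{E\cup\{s\}}$, so $P_E$ and $\tilde sP_E\tilde s^{-1}$ are both of finite index in $P_{E\cup\{s\}}$, hence commensurable. The main obstacle is precisely this reduction: one cannot commensurate by $\tilde w$ in a single step, because $\supp(w)$ may be non-spherical, so $P_{E\cup\supp(w)}$ can contain $P_E$ with infinite index; the point is to adjoin the generators of $E^\perp$ \emph{one at a time}, each contributing only a spherical component and thus a finite-index enlargement. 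A secondary care-point is locating the precise form of the parallel-residue identity $w\WW_Kw^{-1}=\WW_J$ and verifying the minimality of $w=\delta(R,R')$ needed to conclude $w\in\WW_{E^\perp}$.
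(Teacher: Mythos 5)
Your proof is correct, and it splits naturally into a part that matches the paper and a part that genuinely diverges. For (i) you take essentially the paper's route --- Bruhat decomposition plus commensuration of $B$ --- but make it quantitative: the paper disposes of the spherical part by observing that $P_{J^{\sph}}$ is a finite union of cosets of $B$ and then reads the claim off the Bruhat decomposition, whereas you compute $[P_J:P_K]=\sum_{u}[B:B\cap u\inv P_K u]$ over representatives of $\WW_K\backslash\WW_J$, with each term finite by commensuration, so that finiteness of the index is equivalent to $[\WW_J:\WW_K]<\infty$, hence to $J^\infty\subseteq K$ via Lemma~\ref{lemma:prelim_parabolic}. For (ii), your first half coincides with the paper's: both invoke \cite[Proposition~21.10]{MPW15} and Lemma~\ref{lem:Deo} to get $J^\infty=(J')^\infty=:E$ and $w\in\WW_{E^\perp}$; note that your detour through $\N_{\WW}(\WW_E)=\WW_E\times\WW_{E^\perp}$ and the minimality of the Weyl distance is avoidable, since the cited proposition gives the \emph{elementwise} identity $J=wJ'w\inv$, and applying Lemma~\ref{lem:Deo} to $w\inv J^\infty w=(J')^\infty\subseteq S$ yields $w\in\WW_{(J^\infty)^\perp}$ directly --- this also resolves the group-level versus elementwise care-point you flag. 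The endgame is where you differ: the paper places $R$ and $R'$ (after reducing to essential type via the subresidues $R^\infty$ and $(R')^\infty$, using \cite[Remark~21.12]{MPW15}) in a common residue of type $J\cup J^\perp$, splits it as a product of buildings $Y\times Z$, and uses local finiteness of $X$ (itself a consequence of commensuration) to get finite $\mathrm{Stab}_G(R)$-orbits on $Z$; you instead stay purely group-theoretic, writing $w=s_1\cdots s_\ell$ reduced with $s_i\in E^\perp$ and commensurating one generator at a time through the finite-index enlargements $P_E\le P_{E\cup\{s_i\}}$, which your part (i) supplies because $\{s_i\}$ is an isolated, hence spherical, component of $E\cup\{s_i\}$. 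Your telescoping argument buys independence from the product-building decomposition and from Remark~21.12, and makes the commensuration explicit; the paper's version is shorter given the building-theoretic machinery. Your remark that one cannot conjugate by $\tilde w$ in a single step, since $\supp(w)\subseteq E^\perp$ may be non-spherical so that $P_{E\cup\supp(w)}$ could contain $P_E$ with infinite index, correctly identifies the crux of your route.
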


\begin{proof}
(i)
We can write $P_J=B\WW_JB$ as a product $P_J = P_{J^{\sph}}P_{J^{\infty}}$ and similarly for $P_{J'}$.  Since $B$ is commensurated, $P_{J^{\sph}} = B\WW_{J^{\sph}}B$ is a union of finitely many left cosets of $B \le P_{J^{\infty}}$, so $J^{\sph}$ does not contribute to the commensurability class of $P_J$.  Thus we may assume $J$ and $J'$ are essential.  We then see by the Bruhat decomposition that if $P_J$ is virtually contained in $P_{J'}$, then $[\WW_J] \le [W_{J'}]$; conversely if $[\WW_J] \le [\WW_{J'}]$, then $J \subseteq J'$ by Lemma~\ref{lem:parabolic_inclusion}, and hence $P_J \le P_{J'}$.

(ii)
Let $P = \mathrm{Stab}_G(R)$ and $P' = \mathrm{Stab}_G(R')$, let $J$ be the type of $R$ and let $J'$ be the type of $R'$.  By \cite[Proposition~21.10]{MPW15}, the fact that $R$ and $R'$ are parallel implies that the Weyl distance $w$ from $R$ to $R'$ satisfies $J = wJ'w\inv$.  In particular, by Lemma~\ref{lem:Deo} we have $w \in \WW_{(J^{\infty})^\perp}$ and $J^{\infty} = (J')^{\infty}$.  Take a residue $R^{\infty}$ of type $J^{\infty}$ in $R$, and let $(R')^{\infty}:=\proj_{R'}R^{\infty}$. Thus, $R^{\infty}$ and $(R')^{\infty}$ are parallel residues of type $J^{\infty}$ contained in $R$ and $R'$, respectively, and the Weyl distance from $R^{\infty}$ to $(R')^{\infty}$ is still $w$ (see \cite[Remark~21.12]{MPW15}).  Moreover, $\mathrm{Stab}_G(R^{\infty})$ has finite index in $P$ and $\mathrm{Stab}_G((R')^{\infty})$ has finite index in $P'$ by (i).  Thus we may assume $J = J^{\infty} = J'$, that is, that $R$ and $R'$ are of the same type.

Since $w \in \WW_{J^\perp}$, we see that $R$ and $R'$ are both contained in a residue $R''$ of type $J \cup J^\perp$.  As a building, we can write $R'' = Y \times Z$ where $Y$ is a building of type $J$ and $Z$ is a building of type $J^\perp$, so that $R = Y \times \{z\}$ and $R' = Y \times \{z'\}$ for some chambers $z,z' \in Z$.  Since $B$ is commensurated, we see that $X$ is locally finite, so $P$ has finite orbits on $Z$ (since it fixes $z \in Z$), and hence $P$ is virtually contained in $P'$; similarly $P'$ is virtually contained in $P$.
\end{proof}

With these lemmas in hand, we can finish the proof of the theorem.

\begin{proof}[Proof of Theorem~\ref{thm:building_commensurate}]
Suppose $J,J' \subseteq S$ and $g \in G$ are such that $gP_{J}g\inv$ is virtually contained in $P_{J'}$; we aim to show $[\WW_{J}] \le [\WW_{J'}]$.  Since $P_{J^\infty}$ has finite index in $P_J$ (and similarly for $P_{J'}$) by Lemma~\ref{lem:parallel}(i), we are free to assume $J$ and $J'$ are essential.  By Lemma~\ref{lemma:prelim_parabolic} we can then take an element $v \in \WW_J$ such that $\Pc(v^n) = \WW_J$ for all $n \in \bN^*$.

Let $X$ be the building associated to the BN-pair.  Then $P_J$ and $P_{J'}$ are the stabilisers of the standard residues $R_J$ and $R_{J'}$ in $X$ of type $J$ and $J'$, respectively.  Let $A$ be an apartment containing a chamber of $gR_J$ and a chamber of $R_{J'}$.  Then we can regard $A$ as a Coxeter complex for the copy of $W$ induced by $\mathrm{Stab}_G(A)$; in particular, there is some element $v'$ of $\mathrm{Stab}_G(A) \cap gP_Jg\inv$ that acts as $v$ on $A$.  By replacing $v'$ by a positive power we may assume that $v' \in P_{J'}$.  By \cite[Proposition~21.8]{MPW15}, $R_1 = \mathrm{proj}_{gR_{J}}(R_{J'})$ is a subresidue of $gR_J$ of type $J \cap wJ'w\inv$, where $w$ is the Weyl distance from $R_1$ to $\proj_{R_{J'}}(gR_J)$.  Since $v'$ stabilises $gR_J$ and $R_{J'}$, it stabilises $R_1$. On the other hand, as $\Pc(v)=\WW_J$, it does not stabilise any proper subresidue of $gR_J$, and hence $R_1=gR_J$ and $J \cap wJ'w\inv = J$.  In particular, we have $[W_{J}] \le [W_{J'}]$ by Lemma~\ref{lem:parabolic_inclusion}, as desired. 

If, in addition, $gP_Jg\inv \sim_f P_{J'}$ (so that $P_{J'}$ is also virtually contained in $gP_Jg\inv$), we have just showed that $[\WW_{J'}] \leq [\WW_{J}]$, and hence $[\WW_J] = [\WW_{J'}]$. Conversely, if $[\WW_J] = [\WW_{J'}]$, then $gP_Jg\inv \sim_f P_{J'}$: indeed, we may again assume $J,J'$ to be essential, so that $J=J'$ by Lemma~\ref{lem:parabolic_inclusion}. Then $\mathrm{proj}_{R_{J'}}(gR_J)=R_{J'}$ (since it is a subresidue of $R_{J'}=R_J$ of type $J'\cap w\inv J w=J$ by \cite[Proposition~21.8]{MPW15}), and hence $R_{J'}$ and $gR_J$ are parallel and the claim follows from Lemma~\ref{lem:parallel}(ii).

Finally, given $J,J' \subseteq S$ such that $[\WW_J] \le [\WW_{J'}]$, then $P_J$ is virtually contained in $P_{J'}$ by Lemma~\ref{lem:parallel}(i).
\end{proof}

In particular, Theorem~\ref{thm:building_commensurate} applies to the parabolic subgroups of complete geometric Kac--Moody groups, and hence via Lemma~\ref{lemma:openKM} we obtain a complete classification of which open subgroups can be virtually conjugated inside one another.

\begin{corollary}\label{cor:capmar:exact}
Let $G$ be a complete geometric Kac--Moody group over a finite field.  Then there is a unique surjective map $\lambda: \mc{O}(G) \rightarrow \Lambda^{\infty}_{(\WW,S)}$, with the following properties:
\begin{enumerate}[(i)]
\item For all $J \subseteq S$, then $\lambda(P_J) = [\WW_J]$;
\item Given $H,K \in \mc{O}(G)$, then $\lambda(H) = \lambda(K)$ if and only if there is $g \in G$ such that $gHg\inv \sim_f K$;
\item Given $H,K \in \mc{O}(G)$, then $\lambda(H) \subsetneq \lambda(K)$ if and only if there is $g \in G$ such that $gHg\inv \sim_f  L$ for a subgroup $L$ of $K$ of infinite index.
\end{enumerate}
\end{corollary}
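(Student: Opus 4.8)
The plan is to deduce Corollary~\ref{cor:capmar:exact} by combining the classification of open subgroups up to finite index and conjugacy (Lemma~\ref{lemma:openKM}) with the order-theoretic refinement provided by Theorem~\ref{thm:building_commensurate}. The first thing I would verify is that the hypotheses of Theorem~\ref{thm:building_commensurate} are met: the BN-pair $(\cH U^+, N)$ of $G$ has finite Weyl generating set $S$ (since $|I| < \infty$), and $B = \cH U^+$ is a compact open subgroup, hence commensurated in $G$ because conjugates of a compact open subgroup are again compact open and any two such are commensurable. With this in place, Theorem~\ref{thm:building_commensurate} is available for all standard parabolics $P_J$ of $G$.

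Next I would \emph{define} the map $\lambda$ and check it is well-defined on all of $\mc{O}(G)$. By Lemma~\ref{lemma:openKM}, every open subgroup $O$ satisfies $gOg\inv \sim_f P_{J \cup J'}$ for some $g \in G$, some essential $J$ and spherical $J' \subseteq J^\perp$; and by Corollary~\ref{cor:parabolic_inclusion} the class $[\WW_{J\cup J'}]$ is determined by its essential part, namely $(J \cup J')^\infty = J$ (here $J'$ is spherical and commutes with $J$, so contributes no essential vertices). I would set $\lambda(O) := [\WW_{J\cup J'}] = [\WW_J]$. The content to check is that this is independent of all choices: if $gOg\inv \sim_f P_{J_1 \cup J_1'}$ and $hOh\inv \sim_f P_{J_2 \cup J_2'}$, then conjugating one relation by $hg\inv$ shows $hg\inv P_{J_1 \cup J_1'} g h\inv \sim_f P_{J_2 \cup J_2'}$, so by the equality clause of Theorem~\ref{thm:building_commensurate} we get $[\WW_{J_1 \cup J_1'}] = [\WW_{J_2 \cup J_2'}]$. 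This simultaneously forces well-definedness and establishes property~(i) (take $O = P_J$, $g = 1$). Surjectivity is immediate since every $[\WW_J]$ is hit by $P_J$.

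Property~(ii) is then essentially a restatement of the equality clause. If $\lambda(H) = \lambda(K)$, pick conjugates with $g_1 H g_1\inv \sim_f P_{J_1 \cup J_1'}$ and $g_2 K g_2\inv \sim_f P_{J_2 \cup J_2'}$ where the two standard parabolics have the same $\lambda$-value; Theorem~\ref{thm:building_commensurate} gives $P_{J_1 \cup J_1'} \sim_f c\, P_{J_2 \cup J_2'} c\inv$ for a suitable conjugator, and chaining the three $\sim_f$-relations (which is legitimate since $\sim_f$ is an equivalence relation stable under simultaneous conjugation) produces a single $g \in G$ with $gHg\inv \sim_f K$. The converse direction, that $gHg\inv \sim_f K$ forces $\lambda(H) = \lambda(K)$, follows because $\lambda$ is manifestly a conjugacy-and-commensurability invariant by its very construction. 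For property~(iii), $\lambda(H) \subsetneq \lambda(K)$ translates via Corollary~\ref{cor:parabolic_inclusion} into a strict inclusion $J_H^\infty \subsetneq J_K^\infty$ of essential subsets, equivalently $[\WW_{J_H}] < [\WW_{J_K}]$ in $\Lambda^\infty_{(\WW,S)}$; Theorem~\ref{thm:building_commensurate} then yields that the representative parabolic of $H$ is virtually conjugate into that of $K$ as a subgroup of infinite index, and unwinding the $\sim_f$-relations supplies the required $g$ and infinite-index $L \le K$. Conversely, if such $g$ and $L$ exist, then $gHg\inv$ is virtually contained in $K$ but not commensurate to it, so $\lambda(H) \le \lambda(K)$ by the first clause of Theorem~\ref{thm:building_commensurate} while $\lambda(H) \ne \lambda(K)$ by~(ii), giving strict inequality.

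I expect the main obstacle to be purely bookkeeping rather than conceptual: carefully tracking how the spherical padding $J'$ interacts with commensurability, and ensuring at each step that the various $\sim_f$-relations can be composed after transporting by conjugation. In particular, the subtle point in~(iii) is distinguishing ``virtually contained as an infinite-index subgroup'' from ``commensurate'', which must be read off cleanly from the equality-versus-strict-inequality dichotomy in Theorem~\ref{thm:building_commensurate}; I would isolate this as the one place where a short argument, rather than a direct citation, is needed.
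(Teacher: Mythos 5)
Your proposal is correct and follows essentially the same route as the paper, which states Corollary~\ref{cor:capmar:exact} as a direct consequence of Theorem~\ref{thm:building_commensurate} (applicable since $B = \cH U^+$ is compact open, hence commensurated) combined with Lemma~\ref{lemma:openKM}, exactly as you do. Your bookkeeping with the spherical padding $J'$ and your flagged short argument in~(iii) --- using the equality clause of Theorem~\ref{thm:building_commensurate} to rule out a conjugate of $K$ being commensurate with an infinite-index subgroup of $K$ --- are precisely the routine verifications the paper leaves implicit.
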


\section{Locally normal subgroups and ends of Kac--Moody groups}\label{sec:mainKM}

We now study the structure of locally normal subgroups of the geometric completion $G$ of the minimal Kac--Moody group $\cG=\G_{\mc{D}}(k)$ over the finite field $k=\bF_q$ of characteristic $p$.  In this context, we obtain strong restrictions on the noncompact locally normal subgroups of $G$, and a characterisation of when $G$ is one-ended.  We can then apply Theorem~\ref{thm:cpctend:bis} to show in many cases that $G$ is locally indecomposable.  We will use the notation of Sections~\ref{subsubsection:KMRS}--\ref{subsection:POGAATDLCG}.


\subsection{$J$-regular elements}
Since $\WW$ is a finitely generated linear group over $\bC$, Selberg's lemma provides a torsion-free finite-index normal subgroup $\WW_0$ of $\WW$, which we fix throughout. Recall that an element $w\in\WW$ is {\bf straight} if $\ell(w^n)=n\ell(w)$ for all $n\in\bN^*$.
\begin{definition}
 For a subset $J\subseteq I$, call an element $w\in\WW$ {\bf $J$-regular} if 
 $w\in\WW_0$, $w$ is straight, $\Pc(w)=\WW_J$, and $w^n\alpha\neq\alpha$ for all $\alpha\in\Delta^{re+}(J)$ and all $n\in\bN^*$.
\end{definition}

In this subsection and the next, we obtain lower bounds on the closure of the Tits cores of parabolic subgroups, and hence of all open subgroups of $G$.  When $J$ is essential, it will turn out that the closure of the Tits core of the parabolic subgroup $P_J$ is topologically generated by the contraction group of a $J$-regular element and its inverse: see Lemma~\ref{lemma:wdagger_PJdagger} below.  Moreover, we find that every non-compact locally normal subgroup lies between a parabolic subgroup and the closure of its Tits core: see Theorem~\ref{thm:struct_ncpct_lnsbgr}.

Let us first show that $J$-regular elements exist.

\begin{lemma}\label{lemma:existence_J_regular}
Let $J\subseteq I$ be essential. Then there exists a $J$-regular element $w\in\WW$.
\end{lemma}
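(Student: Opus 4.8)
The plan is to construct a $J$-regular element by imposing the four defining conditions one at a time, exploiting the fact that the set of elements satisfying each condition is large enough (generically) that finitely many constraints can be met simultaneously. Since $J\subseteq I$ is essential, Lemma~\ref{lemma:prelim_parabolic} already guarantees an element $w_0\in\WW_J$ with $\Pc(w_0)=\WW_J$, and moreover $\Pc(w_0^n)=\WW_J$ for all $n\neq 0$; this handles the parabolic-closure condition up to passing to powers. The remaining three conditions (membership in the torsion-free subgroup $\WW_0$, straightness, and the eigenvalue-type condition $w^n\alpha\neq\alpha$) should be arranged by replacing $w_0$ by a suitable power or conjugate.

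First I would work inside the parabolic subgroup $\WW_J$ itself, which is again an (essential, irreducible on each component) Coxeter system. The cleanest route to straightness is to invoke the existence of straight elements whose parabolic closure is the whole group: by results on Coxeter groups (e.g.\ the theory behind \cite[Corollary~4.3]{CF10} together with the axial/translation description of infinite-order elements acting on $\Sigma$), an element $w$ acting as a hyperbolic isometry whose axis is not contained in any wall has full support and is straight. Concretely I would take an element $w$ acting hyperbolically on the Davis complex $\Sigma(\WW_J,J)$ with $\Pc(w)=\WW_J$; straightness then follows because $\ell(w^n)$ equals the combinatorial translation length along a $w$-axis, which is additive under taking powers precisely when no wall contains the axis, i.e.\ exactly the essentialness/genericity we have arranged.

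To secure the last condition, namely $w^n\alpha\neq\alpha$ for every $\alpha\in\Delta^{re+}(J)$ and every $n\in\bN^*$, I would argue that an infinite-order straight element with $\Pc(w)=\WW_J$ fixes no real root of $\Delta^{re}(J)$: a fixed real root $\alpha$ would force the wall $\partial\alpha$ to contain a $w$-axis (since $r_\alpha$ commutes with $w$ up to the parabolic-closure constraint), contradicting that the axis meets all relevant walls transversally when $\Pc(w)$ is the full essential parabolic. The condition $w^n\alpha\neq\alpha$ for all $n$, as opposed to just $n=1$, follows by the same axis argument applied to $w^n$, using that $w^n$ is again hyperbolic with the same axis and $\Pc(w^n)=\WW_J$. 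Finally, to land inside $\WW_0$, I would replace $w$ by $w^m$ where $m=[\WW:\WW_0]!$ (or any multiple making $w^m\in\WW_0$, which exists since $\WW/\WW_0$ is finite); straightness, the parabolic-closure condition, and the fixed-root condition are all preserved under passing to powers, as noted above and in Lemma~\ref{lemma:prelim_parabolic}.

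The main obstacle I anticipate is the eigenvalue/fixed-root condition $w^n\alpha\neq\alpha$: the other three properties are either handed to us (parabolic closure) or standard (straight elements exist, finite-index subgroups are cofinal under powers), but ruling out $w^n\alpha=\alpha$ uniformly over \emph{all} positive real roots $\alpha\in\Delta^{re+}(J)$ and \emph{all} powers $n$ requires a genuine geometric input. The cleanest way to handle it is to translate fixed real roots into walls containing a $w$-axis and then use the transversality facts recorded in Section~\ref{subsubsection:KMRS} (walls are transverse to or contain a given geodesic line, and a root is $w$-essential exactly when its wall is transverse to a $w$-axis). If the chosen $w$ has the property that every wall of $\Sigma(\WW_J,J)$ is $w$-essential --- which can be arranged by choosing $w$ suitably generic, e.g.\ a high power of a Coxeter-type element whose axis is in general position --- then no $\partial\alpha$ contains a $w$-axis, and hence $w^n\alpha\neq\alpha$ for all $n$, completing the verification.
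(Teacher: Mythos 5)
There is a genuine gap, and it sits exactly at the step you flagged as the main obstacle. Your central claim --- that an infinite-order straight element $w$ with $\Pc(w)=\WW_J$ fixes no root of $\Delta^{re+}(J)$ --- is \emph{false}, and the counterexamples occur precisely in the affine case. Take $\WW_J$ of type $\widetilde{A}_2$ and let $w=t_\lambda$ be the translation by a dominant vector $\lambda$ of the translation lattice pairing to zero with a simple root (e.g.\ $\lambda=2\alpha_1^\vee+\alpha_2^\vee=3\omega_1^\vee$, orthogonal to $\alpha_2$). By the Iwahori--Matsumoto length formula for translations, $\ell(t_{n\lambda})=n\ell(t_\lambda)$ for dominant $\lambda$, so $w$ is straight; $\Pc(w)=\WW_J$ since every infinite-order element of an irreducible affine Coxeter group has full parabolic closure (proper parabolics are finite); a power of $w$ lies in $\WW_0$; and yet $w$ fixes $\alpha_2$, its axes being parallel to (indeed one is contained in) the wall $\partial\alpha_2$. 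This is exactly why the paper's proof splits into two cases: in the affine case one must additionally choose the translation direction non-parallel to every wall --- there your ``general position'' heuristic is correct and is what the paper does --- but straightness plus full parabolic closure alone never suffices.

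In the non-affine case, your intended contradiction does not exist as stated: if $w\alpha=\alpha$, then $w$ stabilises $\partial\alpha$ and has an axis $L\subseteq\partial\alpha$, and the only consequence is that $\partial\alpha$ meets every $w$-essential wall --- a wall containing the axis is simply not $w$-essential, so nothing clashes with transversality; moreover your fallback condition that \emph{every} wall of $\Sigma(\WW_J,J)$ be $w$-essential is much stronger than the needed condition (no wall contains an axis) and is unsupported. Converting $L\subseteq\partial\alpha$ into a contradiction is the real content of the paper's argument: since the walls $w^nm$ ($m$ a $w$-essential wall) are pairwise parallel and $\Pc(r_m r_{w^nm})=\WW_J$ for $|n|$ large (\cite[Lemma~2.6 and Corollary~2.12]{openKM}), the non-affineness of $\WW_J$ allows one to invoke \cite[Lemma~11 and Proposition~16]{Cap06} to conclude that $r_\alpha$ commutes with every such $r_m$; as these $r_m$ generate $\Pc(w)=\WW_J$ (\cite[Lemma~2.7]{openKM}), Lemma~\ref{lem:Deo} places $r_\alpha$ in $\WW_{J^\perp}$, contradicting $J\cap J^\perp=\varnothing$. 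Notably, no genericity of $w$ is needed in this case at all. A smaller but real flaw: straightness is not characterised by the axis avoiding walls (a conjugate of a straight element has the same axes but is typically not straight, since word length exceeds translation length off the straight representative); this step is repairable by citing \cite[Corollary~3.5 and Lemma~4.5]{straight}, as the paper does, to replace a power of $w$ by a straight $\WW_0$-element conjugate to it within $\WW_J$, after which the reduction of ``$w^n\alpha\neq\alpha$ for all $n$'' to the case $n=1$ goes through via \cite[Lemma~2.6]{openKM}.
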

\begin{proof}
Clearly, there is no loss of generality in assuming that $J$ is irreducible and nonempty. Note also that if $w\alpha\neq\alpha$ for some $w\in\WW_0$ and $\alpha\in \Delta^{re+}(J)$, then also $w^n\alpha\neq\alpha$ for all $n\in\bN^*$ (see \cite[Lemma~2.6]{openKM}). Finally, for any $w\in\WW_J$ of infinite order, there exist some $N\in\bN$ and some $v\in\WW_J$ such that $vw^Nv\inv$ is a straight element of $\WW_0$ (see \cite[Corollary~3.5 and Lemma~4.5]{straight}). In view of Lemma~\ref{lemma:prelim_parabolic}, it is thus sufficient to prove that there exists some $w\in\WW$ with $\Pc(w)=\WW_J$ such that $w\alpha\neq\alpha$ for all $\alpha\in\Delta^{re+}(J)$.

Assume first that $\WW_J$ is an affine Coxeter group. Then the Davis complex $\Sigma_J$ of $(\WW_J,J)$ is a Euclidean space triangulated by the hyperplanes $\partial\alpha$ ($\alpha\in\Delta^{re}$), and there exists some $N\in\bN$ such that for any $v\in\WW_J$, either $v^N=1$ or $v^N$ acts on $\Sigma_J$ as a translation (see \cite[Chapter~10]{BrownAbr}). Note also that $\Pc(v)=\WW_J$ for any $v\in\WW_J$ of infinite order (because $\WW_J$ has no proper infinite parabolic subgroups). One may thus take any $v\in\WW_J$ of infinite order such that the translation axes of $v^N$ are not parallel to any wall of $\Sigma_J$, and set $w:=v^N$.

Assume next that $\WW_J$ is not an affine Coxeter group. Take any $w\in\WW_0$ such that $\Pc(w)=\WW_J$ (such a $w$ exists by \cite[Corollary~2.17]{openKM} and Lemma~\ref{lemma:prelim_parabolic}). Assume for a contradiction that there exists some $\alpha\in\Delta^{re+}(J)$ with $w\alpha=\alpha$. Then $w$ stabilises the wall $\partial\alpha$, and hence possesses an axis $L\subseteq \partial\alpha$. In particular, $\partial\alpha$ intersects all the $w$-essential walls (that is, the walls of a $w$-essential root). On the other hand, for any $w$-essential wall $m$, the walls $w^nm$ ($n\in\bZ$) are pairwise parallel by \cite[Lemma~2.6]{openKM}. Moreover, for any $n\in\bN$ with $|n|$ large enough, the parabolic closure of $r_m$ and $r_{w^nm}$ coincides with $\Pc(w)=\WW_J$ by \cite[Corollary~2.12]{openKM}. Since $\WW_J$ is not an affine Coxeter group, it then follows from \cite[Lemma~11 and Proposition~16]{Cap06} that $r_{\alpha}$ commutes with $r_m$. As $\Pc(w)=\WW_J$ is generated by the reflections $r_m$ with $m$ an $w$-essential wall by \cite[Lemma~2.7]{openKM}, we conclude that $r_{\alpha}$ centralises $\WW_J$ and hence belongs to $\WW_{J^\perp}$ by Lemma~\ref{lem:Deo}, a contradiction since $J\cap J^\perp=\varnothing$.
\end{proof}

The condition of being straight already puts significant restrictions on the real roots fixed by $w$, as the next lemma shows.

\begin{lemma}\label{lemma:real_contracted}
Let $J\subseteq I$ be essential. Let $w\in\WW_0$ be straight and such that $\Pc(w)=\WW_J$. Let $\alpha\in\Delta^{re+}$. Then one of the following holds:
\begin{enumerate}
\item
$w^n\alpha\neq\alpha$ for all $n\in\bN^*$.
\item
$\supp(\alpha)\subseteq J\cup J^\perp$.
\end{enumerate}
\end{lemma}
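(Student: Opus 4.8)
The plan is to prove the contrapositive form of the dichotomy: assuming (1) fails, I establish (2). So suppose there is some $n\in\bN^*$ with $w^n\alpha=\alpha$, and let $r_\alpha=r_{\partial\alpha}\in\WW$ be the reflection associated with $\alpha$. The first observation is that $w^n$ then commutes with $r_\alpha$: conjugation of a reflection is governed by the $\WW$-action on roots, so that $w^nr_\alpha w^{-n}=r_{w^n\alpha}=r_\alpha$. This single identity is the whole input coming from the failure of (1).

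From this commutation I would deduce that $r_\alpha$ normalises the parabolic closure of $w^n$. Since parabolic closure is equivariant under conjugation, $r_\alpha\langle w^n\rangle r_\alpha\inv=\langle w^n\rangle$ yields $r_\alpha\Pc(w^n)r_\alpha\inv=\Pc(w^n)$. Because $J$ is essential and $\Pc(w)=\WW_J$, Lemma~\ref{lemma:prelim_parabolic} gives $\Pc(w^n)=\WW_J$ for every $n\neq 0$; hence $r_\alpha\in\N_{\WW}(\WW_J)$. By Lemma~\ref{lem:Deo}, $\N_{\WW}(\WW_J)=\WW_J\times\WW_{J^\perp}=\WW_{J\cup J^\perp}$ (using that the generators indexed by $J$ commute with those indexed by $J^\perp$, and that $J\cap J^\perp=\varnothing$). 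Thus $r_\alpha\in\WW_{J\cup J^\perp}$.

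It remains to pass from $r_\alpha\in\WW_{J\cup J^\perp}$ back to a statement about $\alpha$. Here I would invoke the standard fact that the reflections of $\WW$ lying in a standard parabolic subgroup $\WW_K$ are exactly the reflections $r_\beta$ with $\beta\in\Delta^{re}(K)$, equivalently with $\supp(\beta)\subseteq K$. Applying this with $K=J\cup J^\perp$, and using that a reflection determines its wall and hence its root up to sign, I conclude $\supp(\alpha)\subseteq J\cup J^\perp$, which is precisely (2).

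The only genuine subtlety is this last fact about which reflections occur inside a standard parabolic; everything else is a short formal manipulation, and in particular the argument uses neither the straightness of $w$ nor its membership in $\WW_0$, only $\Pc(w)=\WW_J$ with $J$ essential. A more geometric route would instead note that $w^n\alpha=\alpha$ forces any $w$-axis $L$ to lie inside the wall $\partial\alpha$ (a transverse intersection is incompatible with $w^n$ translating along $L$ while fixing $\partial\alpha$ setwise), and then relate $\partial\alpha$ to the $w$-essential walls that generate $\Pc(w)=\WW_J$; but the algebraic argument above is shorter and avoids the attendant case analysis.
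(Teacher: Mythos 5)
Your proposal is correct and follows essentially the same route as the paper: from $w^n\alpha=\alpha$ you get $r_\alpha$ commuting with $w^n$, hence normalising $\Pc(w^n)=\WW_J$ (via Lemma~\ref{lemma:prelim_parabolic}), hence $r_\alpha\in\N_\WW(\WW_J)=\WW_{J\cup J^\perp}$ by Lemma~\ref{lem:Deo}, and the final support computation is exactly the paper's step of writing $r_\alpha=vs_iv\inv$ with $v,s_i\in\WW_{J\cup J^\perp}$ so that $\supp(\alpha)=\supp(v\alpha_i)\subseteq J\cup J^\perp$. Your side remark is also accurate: the paper's proof likewise uses neither the straightness of $w$ nor $w\in\WW_0$, only $\Pc(w)=\WW_J$ with $J$ essential.
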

\begin{proof}
Assume that $w^n\alpha=\alpha$ for some $n\in\bN^*$. Thus $w^n$ commutes with the reflection $r_{\alpha}$ associated to $\alpha$. In particular, $r_{\alpha}$ normalises the parabolic closure $\Pc(w^n)$ of $w^n$. As $\Pc(w^n)=\WW_J$ by Lemma~\ref{lemma:prelim_parabolic}, we deduce that $r_{\alpha}\in N_{\WW}(\WW_J)=\WW_{J\cup J^\perp}$ (recall Lemma~\ref{lem:Deo}). Equivalently, $r_{\alpha}=vs_iv\inv$ for some $v,s_i\in \WW_{J\cup J^\perp}$, and hence $\supp(\alpha)=\supp(v\alpha_i)\subseteq J\cup J^\perp$.
\end{proof}

\subsection{The relative Tits core of locally normal subgroups}

\begin{lemma}\label{lemma:translation_contracted}
Let $w\in\WW$ be straight and $\tilde{w}\in G$ be a representative of $w$ in $N$. Let $\alpha\in\Delta^{re+}$ be such that $w^n\alpha\neq\alpha$ for all $n\in\bN^*$. Then $U_{\alpha}\subseteq \con_{G}(\tilde{w})\cup\con_{G}(\tilde{w}\inv)$. 
\end{lemma}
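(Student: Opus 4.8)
The plan is to realise the conjugation action of $\tilde{w}$ on root groups geometrically, via (\ref{eqn:wUalphawinv_KM}), which gives $\tilde{w}^{n}U_{\alpha}\tilde{w}^{-n}=U_{w^{n}\alpha}$ for all $n\in\bZ$. Thus $U_{\alpha}\subseteq\con_{G}(\tilde{w})$ as soon as $U_{w^{n}\alpha}\to\{1\}$ as $n\to+\infty$, and $U_{\alpha}\subseteq\con_{G}(\tilde{w}\inv)$ as soon as $U_{w^{-n}\alpha}\to\{1\}$ as $n\to+\infty$. By the convergence property recalled in Section~\ref{subsubsection:KMG} (see \cite[Proposition~8.96]{KMGbook}), a sequence of root groups $U_{\gamma_{n}}$ tends to $\{1\}$ once the $\gamma_{n}$ are positive with $\height(\gamma_{n})\to+\infty$. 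Hence it suffices to prove that, in at least one of the two directions $n\to\pm\infty$, the roots $w^{n}\alpha$ are eventually positive and satisfy $\height(w^{n}\alpha)\to+\infty$.

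First I would record two elementary facts. Since $w^{n}\alpha\ne\alpha$ for all $n\in\bN^{*}$, and since $w^{n}\alpha=-\alpha$ would give $w^{2n}\alpha=\alpha$, the roots $w^{n}\alpha$ ($n\in\bZ$) are pairwise distinct, so the orbit is infinite. As every root has height $\pm\sum_{i}n_{i}$ with $n_{i}\in\bN$, there are only finitely many roots of any bounded absolute height, so injectivity of $n\mapsto w^{n}\alpha$ forces $|\height(w^{n}\alpha)|\to\infty$ as $|n|\to\infty$. It therefore remains only to control the \emph{sign} of $w^{n}\alpha$ in each direction.

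For the sign, the key point is that straightness pins $C_{0}$ onto an axis: the additivity $d(C_{0},w^{n}C_{0})=\ell(w^{n})=|n|\,\ell(w)=|n|\,d(C_{0},wC_{0})$ together with unique geodesity of the $\CAT$ Davis complex $\Sigma=|\Sigma(\WW,S)|_{\CAT}$ places the orbit $\{w^{n}C_{0}\}_{n\in\bZ}$ on a single geodesic line $L$, which is then a $w$-axis containing the chamber $C_{0}$, with ends $\xi_{\pm}$ satisfying $w^{n}C_{0}\to\xi_{+}$ and $w^{-n}C_{0}\to\xi_{-}$ as $n\to+\infty$. Using the convention that, for $\alpha\in\Delta^{re+}$, the open half-space $\alpha$ is the one containing $C_{0}$, one has that $w^{n}\alpha$ is positive precisely when $w^{-n}C_{0}\in\alpha$. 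Since the chamber $C_{0}\in L$ cannot lie on the wall $\partial\alpha$, we have $L\not\subseteq\partial\alpha$, so $\partial\alpha$ is either transverse to $L$ or disjoint from it. If it is transverse, it separates $\xi_{+}$ from $\xi_{-}$, so exactly one end lies in $\alpha$; hence $w^{n}\alpha$ is eventually positive in exactly one of the two directions, and by the previous paragraph that is the direction along which $\height(w^{n}\alpha)\to+\infty$. If $\partial\alpha$ is disjoint from $L$, then $L\subseteq\alpha$ (as $C_{0}\in L\cap\alpha$ and $L$ avoids $\partial\alpha$), so $w^{-n}C_{0}\in\alpha$ for \emph{all} $n$ and $w^{n}\alpha$ is positive in both directions. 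In either case the hypotheses of the convergence property hold in at least one direction, yielding $U_{\alpha}\subseteq\con_{G}(\tilde{w})\cup\con_{G}(\tilde{w}\inv)$.

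The main obstacle I anticipate is making the third paragraph fully rigorous. First, one must justify that straightness of $w$ places $C_{0}$ on a genuine $w$-axis $L$, reconciling the combinatorial length $\ell$ (the source of the additivity) with the $\CAT$ metric on $\Sigma$; here I would invoke the structure of straight elements from \cite{straight} together with the $\CAT$ geometry of the Davis complex from \cite{Davis,BrownAbr}. Second, one must verify cleanly that, for large $n$, membership of $w^{-n}C_{0}$ in the open half-space $\alpha$ is governed by which side of $\partial\alpha$ the endpoint $\xi_{-}$ lies on, using wall-connectivity in $\Sigma$ (a wall meeting $L$ is either transverse to it or contains it, as recalled in Section~\ref{subsubsection:KMRS}) and the fact that a wall disjoint from $L$ stays at a fixed positive distance from it. I would isolate these as two short geometric lemmas before assembling the argument above.
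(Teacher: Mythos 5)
Your overall reduction is exactly the paper's: conjugate via $\tilde{w}^{n}U_{\alpha}\tilde{w}^{-n}=U_{w^{n}\alpha}$, invoke the convergence property for sequences of positive root groups of height tending to infinity, and observe that the hypothesis $w^{n}\alpha\neq\alpha$ makes the orbit $\{w^{n}\alpha\}$ infinite, so $|\height(w^{n}\alpha)|\to\infty$; the only issue is the eventual sign of $w^{n}\alpha$ in one of the two directions. But that sign dichotomy is the entire content of the lemma, and there your argument has a genuine gap. The paper disposes of it in one line by citing \cite[Lemma~3.2]{simpleKM}, which states precisely that there is $\epsilon\in\{\pm1\}$ with $w^{\epsilon n}\alpha\in\Delta^{re+}$ for all $n\in\bN$. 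You instead attempt a geometric proof whose pivotal claim --- that straightness places $C_{0}$ on a $\CAT$ $w$-axis $L$ --- is supported by an invalid inference: the additivity $\mathrm{d_{Ch}}(C_{0},w^{n}C_{0})=\ell(w^{n})=n\,\ell(w)$ is a statement about the gallery (word) metric, while ``unique geodesics'' is a property of the $\CAT$ metric on the Davis complex; these metrics are only quasi-isometric, and additivity of gallery distances along the orbit does not align the points $w^{n}C_{0}$ on a $\CAT$ geodesic. Moreover, your subsequent case analysis needs the axis to meet the \emph{open} chamber $C_{0}$ (both to exclude $L\subseteq\partial\alpha$ and to upgrade $w^{-n}x\in\alpha$ for a point $x\in C_{0}\cap L$ to $w^{-n}C_{0}\subseteq\alpha$); an axis meeting only the closed chamber --- for instance one running inside a wall --- would break the argument. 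You flag this yourself as an ``obstacle'' to be handled by two unproven lemmas, so the proposal is not a complete proof precisely at the point where the paper inserts its citation.

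It is worth knowing that the missing dichotomy has a short purely combinatorial proof from straightness, with no $\CAT$ geometry. For $v\in\WW$ put $N(v):=\{\beta\in\Delta^{re+} \mid v\inv\beta\in-\Delta^{re+}\}$, so $|N(v)|=\ell(v)$ and $N(uv)=N(u)\sqcup u N(v)$ whenever $\ell(uv)=\ell(u)+\ell(v)$. Straightness of $w$ (hence also of $w\inv$) gives $N(w^{n})=\bigsqcup_{k=0}^{n-1}w^{k}N(w)$ and $N(w^{-n})=\bigsqcup_{k=0}^{n-1}w^{-k}N(w\inv)$; in particular $w^{k}N(w)\subseteq\Delta^{re+}$ for all $k\geq 0$. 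Now if the conclusion failed in both directions, i.e. $w^{-m_{0}}\alpha<0$ and $w^{n_{0}}\alpha<0$ for some $m_{0},n_{0}\in\bN^{*}$, then $\alpha=w^{j}\delta$ with $\delta\in N(w)$, $j\geq 0$, and $\alpha=w^{-k}\gamma$ with $\gamma\in N(w\inv)$, $k\geq 0$; hence $w^{k+j+1}\delta=w\gamma<0$, contradicting $w^{k+j+1}N(w)\subseteq\Delta^{re+}$. So $w^{n}\alpha\in\Delta^{re+}$ for all $n\geq 0$, or $w^{-n}\alpha\in\Delta^{re+}$ for all $n\geq 0$, which is what you need (note this does not even use $w^{n}\alpha\neq\alpha$; that hypothesis only serves to make the heights blow up). Replacing your third paragraph by this argument, or simply by the citation of \cite[Lemma~3.2]{simpleKM} as the paper does, yields a complete proof.
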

\begin{proof}
By \cite[Lemma~3.2]{simpleKM}, there exists $\epsilon\in\{\pm1\}$ such that $w^{\epsilon n}\alpha\in\Delta^{re+}$ for all $n\in\bN$. Then $\height(w^{\epsilon n}\alpha)\stackrel{n\to\infty}{\to}\infty$ as $n\to\infty$ since there are only finitely many positive roots of any given height. Hence $$\tilde{w}^{\epsilon n}U_{\alpha}\tilde{w}^{-\epsilon n}=U_{w^{\epsilon n}\alpha}\stackrel{n\to\infty}{\to}\{1\}\quad\textrm{as $n\to\infty$}$$
by (\ref{eqn:wUalphawinv_KM}) in \S\ref{subsubsection:KMG}, that is, $U_{\alpha}\subseteq \con_{G}(\tilde{w}^{\epsilon})$.
\end{proof}

\begin{prop}\label{prop:LJ}
Let $J\subseteq I$ be essential. Let $w\in\WW$ be a $J$-regular element and $\tilde{w}\in G$ a representative of $w$ in $N$. Then $L^+_J\subseteq \overline{G^{\dagger}_{\tilde{w}}}$.
\end{prop}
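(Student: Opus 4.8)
The plan is to prove the inclusion root group by root group: I will show that $U_\gamma\subseteq\overline{G^\dagger_{\tilde w}}$ for every $\gamma\in\Delta^{re}(J)$. Since $\cL^+_J=\la U_\gamma\mid\gamma\in\Delta^{re}(J)\ra$ and $L^+_J=\overline{\cL^+_J}$ while $\overline{G^\dagger_{\tilde w}}$ is a closed subgroup, this immediately yields $L^+_J\subseteq\overline{G^\dagger_{\tilde w}}$. The positive roots are handled by the tools already assembled: if $\gamma\in\Delta^{re+}(J)$, then $J$-regularity of $w$ gives $w^n\gamma\neq\gamma$ for all $n\in\bN^*$, so Lemma~\ref{lemma:translation_contracted} yields $U_\gamma\subseteq\con_G(\tilde w)\cup\con_G(\tilde w\inv)\subseteq G^\dagger_{\tilde w}$. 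Thus the whole positive part $\cU^+(J)$ lands in $\overline{G^\dagger_{\tilde w}}$, and the real content is to reach the negative root groups.

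For a negative root $-\alpha$ with $\alpha\in\Delta^{re+}(J)$, the idea is to find a positive root in the $w$-orbit of $-\alpha$ and transport the positive case along $\tilde w$. Suppose I can produce $m\in\bZ$ with $w^m\alpha\in\Delta^{re-}$. Then $\gamma':=-w^m\alpha\in\Delta^{re+}(J)$ (here $\WW_J$ preserves $\Delta(J)$, so $w^m\alpha\in\Delta^{re}(J)$), and $\gamma'$ is again moved by every nonzero power of $w$; the positive case therefore provides $\epsilon\in\{\pm1\}$ with $U_{\gamma'}\subseteq\con_G(\tilde w^{\epsilon})$. By~(\ref{eqn:wUalphawinv_KM}) we have $U_{\gamma'}=U_{w^m(-\alpha)}=\tilde w^{m}U_{-\alpha}\tilde w^{-m}$, hence $U_{-\alpha}=\tilde w^{-m}U_{\gamma'}\tilde w^{m}$; and since $\tilde w$ commutes with $\tilde w^{\epsilon}$, conjugation by $\tilde w^{-m}$ preserves $\con_G(\tilde w^{\epsilon})$, so $U_{-\alpha}\subseteq\con_G(\tilde w^{\epsilon})\subseteq G^\dagger_{\tilde w}$, as required.

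The remaining point, which I expect to be the main obstacle, is to guarantee that the $w$-orbit of $\alpha$ does meet $\Delta^{re-}$, i.e. that the sign of $w^n\alpha$ is not constant over $n\in\bZ$. This is where straightness and $\Pc(w)=\WW_J$ are used. The element $w$ acts on the Davis complex $\Sigma$ as a hyperbolic isometry with an axis $L$, and by the connectedness of walls recalled in \S\ref{subsubsection:KMRS}, the wall $\partial\alpha$ either contains $L$, is transverse to $L$, or misses $L$. The case $L\subseteq\partial\alpha$ is impossible under $J$-regularity: as $\Sigma$ is locally finite only finitely many walls contain $L$, so the distinct walls $w^n\partial\alpha$ cannot all contain $L$, forcing $w^n\alpha=\pm\alpha$ for some $n\neq0$ and hence $w^{2n}\alpha=\alpha$, contradicting $w^n\alpha\neq\alpha$. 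In the other two cases the fundamental chamber $C_0$ lies within bounded distance of $L$ while the walls $w^n\partial\alpha$ recede to the two ends of $L$, so $\partial\alpha$ separates $w^{-n}C_0$ from $C_0$ for all large $n$ of one sign; this produces the desired $m$ with $w^m\alpha<0$. Equivalently, one checks that $\height(w^n\alpha)\to+\infty$ in one direction and $\to-\infty$ in the other, an analysis running exactly parallel to the height computation in the proof of Lemma~\ref{lemma:translation_contracted}.

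Combining the two cases, $U_\gamma\subseteq\overline{G^\dagger_{\tilde w}}$ for every $\gamma\in\Delta^{re}(J)$, whence $\cL^+_J\subseteq\overline{G^\dagger_{\tilde w}}$ and finally $L^+_J=\overline{\cL^+_J}\subseteq\overline{G^\dagger_{\tilde w}}$, since the latter is closed.
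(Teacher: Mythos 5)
Your first two paragraphs are sound: the positive root groups are handled exactly as in the paper (Lemma~\ref{lemma:translation_contracted} plus $J$-regularity), and the transport step is correct as far as it goes --- if $w^m\alpha\in-\Delta^{re+}$ for some $m$, then indeed $U_{-\alpha}=\tilde{w}^{-m}U_{-w^m\alpha}\tilde{w}^{m}\subseteq\con_G(\tilde{w}^{\epsilon})$, since $\con_G(\tilde{w}^{\epsilon})$ is invariant under conjugation by powers of $\tilde{w}$. The genuine gap is the claim on which your third paragraph rests: that for \emph{every} $\alpha\in\Delta^{re+}(J)$ the orbit $\{w^n\alpha\}_{n\in\bZ}$ meets the negative roots. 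This is false. Note that, for $\alpha$ positive, $w^n\alpha$ is negative precisely when $\partial\alpha$ separates $C_0$ from $w^{-n}C_0$; a wall \emph{disjoint} from the $w$-axis need never do this, even though the translates $w^n\partial\alpha$ do recede to the two ends of the axis --- drifting towards an end of $L$ is perfectly compatible with staying forever on one side of the orbit of chambers, so your inference ``recede, hence separate'' conflates the transverse case with the disjoint case. Concretely, take $J=I$ with $|I|=3$ and $a_{ij}a_{ji}\geq 4$ for all $i\neq j$, so that $\WW_J$ is the universal Coxeter group of rank $3$, whose chamber graph is the trivalent tree with walls the edges; let $w$ be a suitable power of $s_1s_2s_3$ (straight, lying in $\WW_0$, with $\Pc(w)=\WW_J$, and with $w^n\alpha\neq\alpha$ for all roots and $n\neq 0$, since a hyperbolic isometry of a tree leaves no edge invariant), whose axis passes through $C_0$. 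For any edge $e$ hanging off the axis, no translate $w^ne$ separates two chambers of the axis; hence the positive root $\alpha$ whose half-tree contains the axis satisfies $w^n\alpha\in\Delta^{re+}$ for all $n\in\bZ$, although $w^n\alpha\neq\alpha$ throughout. So the separation conclusion holds exactly for the $w$-essential roots, and your method cannot reach $U_{-\alpha}$ for the non-essential ones.

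This is precisely why the paper's proof takes a different route to the negative side. For $w$-essential $\alpha\in\Delta^{re}(J)$ the sign along the orbit does change, so both $U_\alpha$ and $U_{-\alpha}$ lie in $G^{\dagger}_{\tilde{w}}$, and hence so does the lifted reflection $\widetilde{r}_\alpha\in U_\alpha U_{-\alpha}U_\alpha$ of (\ref{eqn:r_alpha_KM}). The crucial ingredient missing from your argument is then the fact (\cite[Lemma~2.7]{openKM}) that $\WW_J=\Pc(w)$ is generated by the reflections $r_\alpha$ with $\alpha$ $w$-essential: this produces a subgroup $\widetilde{\WW}_J\leq G^{\dagger}_{\tilde{w}}$ mapping onto all of $\WW_J$, and conjugating the positive root groups $U_\alpha$, $\alpha\in\Delta^{re+}(J)$, by $\widetilde{\WW}_J$ via (\ref{eqn:wUalphawinv_KM}) yields every $U_\gamma$ with $\gamma\in\Delta^{re}(J)$, hence $\cL^+_J\subseteq G^{\dagger}_{\tilde{w}}$ and, taking closures, $L^+_J\subseteq\overline{G^{\dagger}_{\tilde{w}}}$. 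Your orbit-transport trick is a valid substitute for the two-sided contraction when $\alpha$ is $w$-essential, but without the generation statement for $\Pc(w)$ the proof as written does not close.
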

\begin{proof}
Without loss of generality, we may assume that $J\neq\varnothing$ (so that $w$ has infinite order).
Set
$$\Psi_J:=\{\alpha\in\Delta^{re}(J) \ | \ \textrm{$\alpha$ is $w$-essential}\}.$$
Then $\WW_J=\Pc(w)=\la r_{\alpha} \ | \ \alpha\in\Psi_J\ra$ by \cite[Lemma~2.7]{openKM}. On the other hand, if $\alpha\in\Psi_J$ (so that $w^n\alpha\neq\alpha$ for all $n\in\bN^*$), Lemma~\ref{lemma:translation_contracted} implies that $U_{\pm\alpha}\subseteq G^{\dagger}_{\tilde{w}}$, and hence $\widetilde{r}_{\alpha}\in G^{\dagger}_{\tilde{w}}$ (by (\ref{eqn:r_alpha_KM})). Thus $G^{\dagger}_{\tilde{w}}$ contains the subgroup $\widetilde{\WW}_J:=\langle \widetilde{r}_{\alpha} \ | \ \alpha\in\Psi_J\rangle$ mapping onto $\WW_J$. 

Similarly, $U_{\alpha}\subseteq G^{\dagger}_{\tilde{w}}$ for all $\alpha\in\Delta^{re+}(J)$ by Lemma~\ref{lemma:translation_contracted} (as $w$ is $J$-regular). Since $\mc{L}_J^+$ is generated by such $U_{\alpha}$'s and by $\widetilde{\WW}_J$ (see (\ref{eqn:wUalphawinv_KM})), we conclude that $L_J^+\subseteq \overline{G^{\dagger}_{\tilde{w}}}$, as desired.
\end{proof}

\begin{lemma}\label{lemma:wdagger_PJdagger}
Let $J\subseteq I$ be essential. Let $w\in\WW$ be a $J$-regular element and $\tilde{w}\in G$ a representative of $w$ in $N$. Then $\overline{G^\dagger_{\tilde{w}}}= \overline{(P_J)^\dagger}$.
\end{lemma}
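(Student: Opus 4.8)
The plan is to prove the stronger equality $(P_J)^\dagger = G^\dagger_{\tilde{w}}$ and then pass to closures. First I would record the easy structural facts: since $P_J$ contains the open subgroup $B=\cH U^+$, it is an open (hence \tdlc) subgroup of $G$; and since $\Pc(w)=\WW_J$ forces $w\in\WW_J$, the representative $\tilde{w}$ lies in $P_J$, as does $\tilde{w}\inv$. Because $P_J$ is open, the relative contraction groups computed in $P_J$ and in $G$ agree, so $\con_G(\tilde{w}^{\pm1})=\con_{P_J}(\tilde{w}^{\pm1})$ and consequently $G^\dagger_{\tilde{w}}=(P_J)^\dagger_{\tilde{w}}$. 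This reduces the lemma to showing that the Tits core of the \tdlc group $P_J$ coincides with its relative Tits core at $\tilde{w}$.

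The heart of the argument is an application of Proposition~\ref{prop:dagger_gg} inside $P_J$, for which I need $\overline{(P_J)^\dagger_{\tilde{w}}}=\overline{G^\dagger_{\tilde{w}}}$ to be cocompact in $P_J$. Proposition~\ref{prop:LJ} already gives $L_J^+\subseteq\overline{G^\dagger_{\tilde{w}}}$, so it suffices to show that $L_J^+$ is cocompact in $P_J$. For this I would use the Levi decomposition of Lemma~\ref{lemma:Levi_dec_geo}: part~(1) gives $P_J=L_J\cdot U_J$ with $U_J$ compact, and part~(2) gives $L_J=\cH\cdot L_J^+$, whence $P_J=\cH\cdot L_J^+\cdot U_J$. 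Since $L_J^+$ is normal in $L_J$, the finite group $\cH$ normalises $L_J^+$, so I may rewrite this as $P_J=L_J^+\cdot(\cH U_J)$, where $\cH U_J$ is a finite union of translates of the compact set $U_J$ and hence compact. Therefore $L_J^+$ --- and a fortiori the larger group $\overline{G^\dagger_{\tilde{w}}}$ --- is cocompact in $P_J$.

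With cocompactness in hand, I would apply Proposition~\ref{prop:dagger_gg} to the \tdlc group $P_J$ and the element $\tilde{w}\in P_J$: since $\overline{(P_J)^\dagger_{\tilde{w}}}$ is cocompact in $P_J$, the proposition yields $(P_J)^\dagger=(P_J)^\dagger_{\tilde{w}}=G^\dagger_{\tilde{w}}$. Taking closures gives $\overline{(P_J)^\dagger}=\overline{G^\dagger_{\tilde{w}}}$, as desired.

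I expect no single step to be a serious obstacle, since the substantive dynamical input --- that the contraction group of $\tilde{w}$ recovers all of $L_J^+$ --- is already provided by Proposition~\ref{prop:LJ}. The points that require care are purely organisational: verifying that $P_J$ is open so that relative Tits cores may be computed interchangeably in $P_J$ and $G$, correctly reordering the factors of the Levi decomposition to exhibit $L_J^+$ as cocompact, and invoking Proposition~\ref{prop:dagger_gg} with the ambient group taken to be $P_J$ rather than $G$.
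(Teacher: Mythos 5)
Your proposal is correct and follows essentially the same route as the paper's proof: reduce to $P_J$ via $\con_G(\tilde{w}^{\pm 1})=\con_{P_J}(\tilde{w}^{\pm 1})$, use Proposition~\ref{prop:LJ} to get $L_J^+\subseteq\ol{G^\dagger_{\tilde{w}}}$ and hence cocompactness in $P_J$, then apply Proposition~\ref{prop:dagger_gg} with ambient group $P_J$ and $g:=\tilde{w}$. The only difference is cosmetic: you spell out via Lemma~\ref{lemma:Levi_dec_geo}(1,2) why $L_J^+$ is cocompact in $P_J$ (writing $P_J=L_J^+\cdot(\cH U_J)$), a step the paper cites without elaboration.
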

\begin{proof}
Let $H = \overline{G^\dagger_{\tilde{w}}}$.  Since $\tilde{w}\in P_J$ and $P_J$ is open, we have $\con_G(\tilde{w}^{\pm 1})=\con_{P_J}(\tilde{w}^{\pm 1})$ and hence $H = \ol{(P_J)^\dagger_{\tilde{w}}}$.  On the other hand, by Proposition~\ref{prop:LJ}, $H$ contains the subgroup $L^+_J$ of $P_J$, which is cocompact in $P_J$ by Lemma~\ref{lemma:Levi_dec_geo}(1,2).  Thus $H$ is cocompact in $P_J$.  It now follows by Proposition~\ref{prop:dagger_gg} (applied to $G:=P_J$ and $g:=\tilde{w}$) that $(P_J)^\dagger_{\tilde{w}} = (P_J)^\dagger$, and hence $H=\ol{(P_J)^\dagger_{\tilde{w}}}=\overline{(P_J)^\dagger}$, as desired.
\end{proof}

We now arrive at the key theorem of this subsection, putting a lower bound on the closure of the relative Tits core of a $J$-regular element.

\begin{theorem}\label{thm:rTc_w_J}
Let $J\subseteq I$ be essential. Let $w\in\WW$ be a $J$-regular element and $\tilde{w}\in G$ a representative of $w$ in $N$. Then 
\[
L^+_J U_{J\cup J^\perp}\subseteq \overline{G^{\dagger}_{\tilde{w}}}  = \overline{(P_J)^\dagger}.
\]

\end{theorem}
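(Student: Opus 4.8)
The plan is to combine the two pieces that have already been established. By Lemma~\ref{lemma:wdagger_PJdagger} I already have the equality $\overline{G^\dagger_{\tilde w}} = \overline{(P_J)^\dagger}$, so the only thing left to prove is the containment $L^+_J U_{J\cup J^\perp} \subseteq \overline{G^\dagger_{\tilde w}}$. Proposition~\ref{prop:LJ} gives $L^+_J \subseteq \overline{G^\dagger_{\tilde w}}$ for free, so the whole task reduces to showing that the unipotent part $U_{J\cup J^\perp}$ is also contained in $\overline{G^\dagger_{\tilde w}}$.

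To handle $U_{J\cup J^\perp}$, I would first reduce to root groups: since $U_{J\cup J^\perp}$ is (topologically) generated by the root groups $U_\alpha$ with $\alpha \in \Delta^{re+}_{J\cup J^\perp}$ (together with the higher-height root groups that lie in the closure), it suffices to show $U_\alpha \subseteq G^\dagger_{\tilde w}$ for each relevant real root $\alpha \in \Delta^{re+}_{J\cup J^\perp}$, i.e. each positive real root whose support is \emph{not} contained in $J$. The natural tool is Lemma~\ref{lemma:translation_contracted}, which tells me $U_\alpha \subseteq \con_G(\tilde w)\cup\con_G(\tilde w\inv) \subseteq G^\dagger_{\tilde w}$ as soon as $w^n\alpha \neq \alpha$ for all $n\in\bN^*$. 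So the crux is to verify this non-fixing condition for the roots in question.

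Here is where Lemma~\ref{lemma:real_contracted} does the decisive work. Since $w$ is $J$-regular it is in particular straight with $\Pc(w)=\WW_J$, so that lemma applies and splits every $\alpha\in\Delta^{re+}$ into two cases: either $w^n\alpha\neq\alpha$ for all $n$, or $\supp(\alpha)\subseteq J\cup J^\perp$. Combined with the $J$-regularity condition $w^n\alpha\neq\alpha$ for $\alpha\in\Delta^{re+}(J)$, I can classify exactly which positive real roots are moved by all powers of $w$: any root supported outside $J\cup J^\perp$ is moved by the dichotomy, and any root supported inside $J$ is moved by $J$-regularity. The roots that might be fixed are therefore confined to those with $\supp(\alpha)\subseteq J\cup J^\perp$ but $\supp(\alpha)\not\subseteq J$ — and for these I need to argue separately. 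The clean way is to observe that for $\alpha$ with $\supp(\alpha)\subseteq J\cup J^\perp$ and meeting $J^\perp$, the root group $U_\alpha$ is still reached: if $w^n\alpha\neq\alpha$ Lemma~\ref{lemma:translation_contracted} applies directly, while if $w^n\alpha=\alpha$ I can use that $w$ acts trivially on the $J^\perp$ directions (since $\Pc(w)=\WW_J$ and $\WW_J$ commutes with $\WW_{J^\perp}$ by Lemma~\ref{lem:Deo}) to generate $U_\alpha$ from lower-height contracted root groups together with the already-captured $L^+_J$, exploiting the commutation relations.

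The step I expect to be the main obstacle is precisely this last case analysis: capturing the root groups $U_\alpha$ for roots supported partly in $J^\perp$ that happen to be fixed by $w$. For those roots Lemma~\ref{lemma:translation_contracted} is silent, so I cannot simply contract them to the identity; instead I must produce $U_\alpha$ inside $\overline{G^\dagger_{\tilde w}}$ by an indirect argument using that $\overline{G^\dagger_{\tilde w}}$ already contains $L^+_J$ and is a group closed under the building's commutator relations, and that $\tilde w$ normalises the relevant subgroups. I anticipate needing the structure of $U_{J\cup J^\perp}$ as a subgroup normalised by $L^+_J$, and that the roots with support meeting $J$ nontrivially are automatically handled, so that the genuinely problematic roots are those supported entirely in $J^\perp$ — but these generate a \emph{spherical} (hence finite) piece, which should make them tractable by a direct compactness or finite-generation argument rather than a contraction argument.

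\iffalse
\begin{proof}
By Lemma~\ref{lemma:wdagger_PJdagger}, we have $\overline{G^\dagger_{\tilde w}}=\overline{(P_J)^\dagger}$, so it remains to show that $L^+_J U_{J\cup J^\perp}\subseteq\overline{G^\dagger_{\tilde w}}$. The containment $L^+_J\subseteq\overline{G^\dagger_{\tilde w}}$ is Proposition~\ref{prop:LJ}, so we need only prove $U_{J\cup J^\perp}\subseteq\overline{G^\dagger_{\tilde w}}$.

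It suffices to show $U_\alpha\subseteq G^\dagger_{\tilde w}$ for each $\alpha\in\Delta^{re+}_{J\cup J^\perp}$, i.e. for each positive real root with $\supp(\alpha)\not\subseteq J$.

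Fix such an $\alpha$. We claim $w^n\alpha\neq\alpha$ for all $n\in\bN^*$. Since $w$ is $J$-regular, it is straight with $\Pc(w)=\WW_J$, so Lemma~\ref{lemma:real_contracted} applies: either $w^n\alpha\neq\alpha$ for all $n$, or $\supp(\alpha)\subseteq J\cup J^\perp$. In the first case we are done.

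In the second case, suppose for contradiction that $w^n\alpha=\alpha$ for some $n\in\bN^*$. Then $w^n$ commutes with $r_\alpha$, so $r_\alpha$ normalises $\Pc(w^n)=\WW_J$ (Lemma~\ref{lemma:prelim_parabolic}), giving $r_\alpha\in\N_{\WW}(\WW_J)=\WW_{J\cup J^\perp}$ by Lemma~\ref{lem:Deo}. This is consistent with $\supp(\alpha)\subseteq J\cup J^\perp$; we must rule out the fixing more carefully. Since $w$ is $J$-regular, $w^n\beta\neq\beta$ for all $\beta\in\Delta^{re+}(J)$; hence $\alpha\notin\Delta^{re+}(J)$, so $\supp(\alpha)\cap J^\perp\neq\varnothing$. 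But $w\in\WW_J$ acts trivially on all coordinates indexed by $J^\perp$ and fixes $\alpha$ only if $\alpha$ has trivial $J$-component, i.e. $\supp(\alpha)\subseteq J^\perp$. Such $\alpha$ generate the spherical piece $\WW_{J^\perp\cap(\cdots)}$; a direct check using $r_\alpha\in\WW_{J^\perp}$ and the finiteness of the spherical part gives the contradiction, as in Lemma~\ref{lemma:existence_J_regular}. [This is the step requiring the most care.]

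Thus $w^n\alpha\neq\alpha$ for all $n$, and Lemma~\ref{lemma:translation_contracted} gives $U_\alpha\subseteq\con_G(\tilde w)\cup\con_G(\tilde w\inv)\subseteq G^\dagger_{\tilde w}$.

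Taking closures and using that $U_{J\cup J^\perp}$ is topologically generated by these root groups, we obtain $U_{J\cup J^\perp}\subseteq\overline{G^\dagger_{\tilde w}}$, completing the proof.
\end{proof}
\fi
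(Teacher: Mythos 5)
There are two genuine gaps, and the first stems from a misreading of the notation $\Delta^{re+}_{J\cup J^\perp}$. By the paper's convention $\Delta^+_K:=\Delta^+\setminus\Delta^+(K)$, the set $\Delta^{re+}_{J\cup J^\perp}$ consists of the positive real roots whose support is \emph{not contained in $J\cup J^\perp$} --- not, as you write, those whose support is not contained in $J$. With the correct reading, your ``main obstacle'' evaporates: for every root $\alpha$ indexing a generator of $\cU_{J\cup J^\perp}$, case (2) of Lemma~\ref{lemma:real_contracted} is excluded by definition, so $w^n\alpha\neq\alpha$ for all $n\in\bN^*$ and Lemma~\ref{lemma:translation_contracted} applies at once. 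Conversely, the phantom case you try to handle (roots with $\supp(\alpha)\subseteq J\cup J^\perp$ meeting $J^\perp$) is genuinely intractable by the route you sketch: a root supported entirely in $J^\perp$ \emph{is} fixed by $w\in\WW_J$ (since $a_{ij}=a_{ji}=0$ whenever $i\in J$, $j\in J^\perp$), so the contradiction you gesture at cannot be derived; moreover $J^\perp$ need not be spherical --- only $J$ is assumed essential --- so the ``finite piece'' argument has no basis. Fortunately such roots simply do not index generators of $U_{J\cup J^\perp}$.

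The second, more serious gap: $\cU_{J\cup J^\perp}$ is by definition the \emph{normal closure in $\cU^+$} of $\la U_{\alpha} \mid \alpha\in\Delta^{re+}_{J\cup J^\perp}\ra$, so proving $U_\alpha\subseteq G^{\dagger}_{\tilde{w}}$ for the root groups themselves is not enough: you must also capture the conjugates $uU_\alpha u\inv$ for arbitrary $u\in\cU^+$, and your closing sentence (``$U_{J\cup J^\perp}$ is topologically generated by these root groups'') asserts away precisely this point, which is where most of the paper's proof lives. The paper writes $u=u_J\overline{u}$ with $u_J\in\cU^+(J)$ and $\overline{u}\in\cU_{J\cup J^\perp}$ via the decomposition (\ref{eqn:Levidecmin}); it shows $\overline{u}U_\alpha\overline{u}\inv\subseteq\con_G(\tilde{w}^{\varepsilon})$ using that $\tilde{w}$ normalises $\cU_{J \cup J^\perp}$ and that $G$ has a basis of identity neighbourhoods consisting of subgroups normal in $U^+$; and it absorbs the remaining conjugation by $u_J$ by working with the relative Tits core $G^{\dagger}_{H_J}$ of the set $H_J=\{u\tilde{w}u\inv \mid u\in\cU^+(J)\}$, whose closure it identifies with $\overline{G^{\dagger}_{\tilde{w}}}$ via Lemma~\ref{lemma:wdagger_PJdagger}. (One could instead note that $u_J\in\cU^+(J)\le L^+_J\subseteq\overline{G^{\dagger}_{\tilde{w}}}$ by Proposition~\ref{prop:LJ}, so conjugation by $u_J$ preserves the closed group $\overline{G^{\dagger}_{\tilde{w}}}$; but some argument handling the $\cU^+$-conjugates is indispensable, and your proposal contains none.)
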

\begin{proof}
Set $H_J:=\{u\tilde{w}u\inv \ | \ u\in \cU^+(J)\}$, so that $\overline{G^{\dagger}_{\tilde{w}}}\subseteq \overline{G^{\dagger}_{H_J}}$. Since $H_J\subseteq P_J$ and $P_J$ is open, we also have $\overline{G^{\dagger}_{H_J}}\subseteq\overline{G^{\dagger}_{P_J}}=\overline{(P_J)^\dagger}$, and hence $\overline{G^{\dagger}_{H_J}}=\overline{G^{\dagger}_{\tilde{w}}}=\overline{(P_J)^\dagger}$ by Lemma~\ref{lemma:wdagger_PJdagger}. To conclude the proof, we now show that $L^+_J U_{J\cup J^\perp}\subseteq \overline{G^{\dagger}_{H_J}}$.

We already know by Proposition~\ref{prop:LJ} that $L^+_J\subseteq\overline{G^{\dagger}_{\tilde{w}}}= \overline{G^{\dagger}_{H_J}}$. By definition of $U_{J\cup J^\perp}$, the inclusion will then follow if we show that for each $\alpha\in\Delta^{re+}_{J\cup J^\perp}$ and each $u\in \cU^+$, there is some $u_J\in \cU^+(J)$ such that $uU_{\alpha}u\inv\subseteq \con_G(u_J\tilde{w}^{\varepsilon}u_J\inv)$ for some $\varepsilon\in\{\pm 1\}$. 
Since $w^n\alpha\neq\alpha$ for all $n\in\bN^*$ by Lemma~\ref{lemma:real_contracted}, we have $U_{\alpha}\subseteq\con(\tilde{w}^{\varepsilon})$ for some $\varepsilon\in\{\pm 1\}$ by Lemma~\ref{lemma:translation_contracted}. Write $u=u_J\overline{u}$ with $u_J\in \cU^+(J)$ and $\overline{u}\in \cU_J$ following the decomposition (\ref{eqn:Levidecmin}) in \S\ref{subsubsection:KMG}. We claim that $\overline{u}U_{\alpha}\overline{u}\inv\subseteq \con_G(\tilde{w}^{\varepsilon})$, whence the lemma. 

Indeed, note that $\tilde{w}$ normalises $\mathcal U_J$, as $\tilde{w}\in \cL_J$ and $\cL_J$ normalises $\mathcal U_J$. In particular, for each $n\in\bN$, there is some $u_n\in \mathcal U_J\subseteq\cU^+$ such that $$\tilde{w}^{\varepsilon n}\overline{u}U_{\alpha}\overline{u}\inv \tilde{w}^{-\varepsilon n}=u_n\tilde{w}^{\varepsilon n}U_{\alpha}\tilde{w}^{-\varepsilon n}u_n\inv.$$ Since $U_{\alpha}\subseteq \con_G(\tilde{w}^{\varepsilon})$ and $G$ has a basis of identity neighbourhoods consisting of normal subgroups of $U^+$ (cf. \S\ref{subsubsection:KMG}), we deduce that $\overline{u}U_{\alpha}\overline{u}\inv\subseteq \con_G(\tilde{w}^{\varepsilon})$ as well, as claimed.
\end{proof}

In light of Lemma~\ref{lemma:openKM}, we obtain a restriction on all Tits cores of open subgroups.

\begin{corollary}\label{cor:open_Tc}

\begin{enumerate}[(i)]
\item Let $J \subseteq I$.  Then
\[
\ol{(P_J)^\dagger} = \ol{(P_{J^\infty})^\dagger} \ge  L^+_{J^{\infty}} U_{J^{\infty} \cup (J^\infty)^\perp}.
\]
\item Let $O$ be an open subgroup of $G$.  Then $\ol{O^\dagger}$ is cocompact in $O$.  In particular, the compact quotient $O/\ol{O^\dagger}$ of $O$ is the largest residually discrete quotient of $O$, so every discrete quotient of $O$ is finite.
\end{enumerate}
\end{corollary}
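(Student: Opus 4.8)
The plan is to prove the two parts in turn, using part (i) to compute $\ol{(P_J)^\dagger}$ and then exporting that computation to an arbitrary open subgroup $O$ via the conjugacy classification of Lemma~\ref{lemma:openKM}. Throughout, the engine is the invariance of the relative Tits core under passage to a cocompact subgroup (Lemma~\ref{lemma:dagger_cc}), together with the elementary monotonicity $G^\dagger_H\le G^\dagger_{H'}$ for $H\subseteq H'$ and the identity $G^\dagger_O=O^\dagger$ for $O$ open.

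For part (i), the lower bound is immediate: since $J^\infty$ is essential, Lemma~\ref{lemma:existence_J_regular} provides a $J^\infty$-regular element with representative $\tilde w\in N$, and Theorem~\ref{thm:rTc_w_J} applied to $J^\infty$ gives $L^+_{J^\infty}U_{J^\infty\cup(J^\infty)^\perp}\subseteq\ol{(P_{J^\infty})^\dagger}$. It remains to prove $\ol{(P_J)^\dagger}=\ol{(P_{J^\infty})^\dagger}$. The inclusion $\ol{(P_{J^\infty})^\dagger}\le\ol{(P_J)^\dagger}$ follows from $P_{J^\infty}\le P_J$ by monotonicity. For the reverse, I would exhibit a closed cocompact subgroup of $P_J$ that lies inside $P_{J^\infty}$; the candidate is $K:=L^+_{J^\infty}U_J$. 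It has finite index in $P_J$ by Lemma~\ref{lemma:Levi_dec_geo}(5), and it is closed, being the product of the closed subgroup $L^+_{J^\infty}$ with the compact normal subgroup $U_J$ (Lemma~\ref{lemma:Levi_dec_geo}(1)). The one point to check is $K\le P_{J^\infty}$, which reduces to $U_J\le U_{J^\infty}$: this holds because $J^\infty\subseteq J$ forces $\Delta^{re+}_J\subseteq\Delta^{re+}_{J^\infty}$ and hence a containment of the corresponding normal closures. Then Lemma~\ref{lemma:dagger_cc} (with $H=P_J$, $\ol K=K$) gives $G^\dagger_{P_J}=G^\dagger_K\le G^\dagger_{P_{J^\infty}}$, i.e. $\ol{(P_J)^\dagger}\le\ol{(P_{J^\infty})^\dagger}$, finishing (i).

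For part (ii), I first conjugate: since $\con$ and $\dagger$ are equivariant under the topological automorphisms of $G$ and cocompactness is preserved, I may replace $O$ by a $G$-conjugate and assume, by Lemma~\ref{lemma:openKM}, that there are an essential $J$ and a spherical $J'\subseteq J^\perp$ with $L^+_JU_{J\cup J^\perp}\subseteq O\subseteq P_{J\cup J'}$ and $[P_{J\cup J'}:O]<\infty$. As $(J\cup J')^\infty=J$ (the components of $J'$ are spherical and orthogonal to $J$), part (i) gives $\ol{(P_{J\cup J'})^\dagger}=\ol{(P_J)^\dagger}$; by Lemma~\ref{lemma:wdagger_PJdagger} this contains $L^+_J$, which is cocompact in $P_J$ by Lemma~\ref{lemma:Levi_dec_geo}(1,2), so $\ol{(P_J)^\dagger}$ is cocompact in $P_J$. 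Moreover $P_J$ has finite index in $P_{J\cup J'}$ (by part (i) applied to $J\cup J'$, the finite-index subgroup $L^+_JU_{J\cup J'}$ of $P_{J\cup J'}$ lies in $P_J$, using $U_{J\cup J'}\le U_J$), so by transitivity of cocompactness $\ol{(P_{J\cup J'})^\dagger}$ is cocompact in $P_{J\cup J'}$. Finally $O$ is closed and cocompact in $P_{J\cup J'}$, so Lemma~\ref{lemma:dagger_cc} yields $\ol{O^\dagger}=\ol{(P_{J\cup J'})^\dagger}$; since $\ol{O^\dagger}\le O\le P_{J\cup J'}$ with $O$ closed and $\ol{O^\dagger}$ cocompact in $P_{J\cup J'}$, it is cocompact in $O$.

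For the ``in particular'' clause, I would show directly that $\ol{O^\dagger}$ is the smallest closed normal subgroup of $O$ with residually discrete quotient. It is normal in $O$ and, being cocompact, the quotient $O/\ol{O^\dagger}$ is compact, hence profinite and residually discrete. Conversely, if $M\unlhd O$ is closed with $O/M$ residually discrete, then the open normal subgroups of $O/M$ intersect trivially; but any contraction group lies inside every open normal subgroup (if $g^nxg^{-n}\to1$ then $g^{n_0}xg^{-n_0}$ lies in a given open normal $N$ for some $n_0$, and normality of $N$ returns $x\in N$), so every contraction group of $O/M$ is trivial, whence $\con_O(g)\subseteq M$ for all $g$ and $\ol{O^\dagger}\le M$. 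Thus $O/\ol{O^\dagger}$ is the largest residually discrete quotient; being profinite, any discrete quotient of $O$ factors through it and is therefore finite. The main obstacle here is organisational rather than conceptual: the whole argument rests on transferring the relative Tits core along the chain $P_{J^\infty}\le P_J\le P_{J\cup J'}\ge O$ via Lemma~\ref{lemma:dagger_cc}, so the delicate part is verifying closedness and cocompactness at each step — in particular pinning down the containment $K=L^+_{J^\infty}U_J\le P_{J^\infty}$ (via $U_J\le U_{J^\infty}$) that makes the transfer in part (i) go through.
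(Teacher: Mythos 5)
Your proof is correct and follows essentially the same route as the paper: the lower bound comes from Theorem~\ref{thm:rTc_w_J} applied to the essential set $J^\infty$, the identification $\ol{(P_J)^\dagger}=\ol{(P_{J^\infty})^\dagger}$ and the transfer to an arbitrary open $O$ rest on the finite-index/cocompact invariance of Tits cores (Lemma~\ref{lemma:dagger_cc}) together with Lemmas~\ref{lemma:Levi_dec_geo} and~\ref{lemma:openKM}, and the ``in particular'' clause is the same argument that contraction groups lie in every open normal subgroup. The only differences are bookkeeping: your intermediate subgroup $K=L^+_{J^\infty}U_J$ with the verification $U_J\le U_{J^\infty}$ just makes explicit the finite-index fact $[P_J:P_{J^\infty}]<\infty$ that the paper invokes directly (and your parenthetical ``by part (i)'' for that finite-index claim should really cite Lemma~\ref{lemma:Levi_dec_geo}(5), as your own explanation shows).
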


\begin{proof}
For (i), we note that $P_{J^{\infty}}$ has finite index in $P_J$, so it has the same Tits core.  The conclusion then follows by Theorem~\ref{thm:rTc_w_J}.

For (ii), by Lemma~\ref{lemma:openKM} there is $g \in G$ and a subset $J \subseteq I$ such that $gOg\inv$ is a finite index open subgroup of $P_J$.  Hence, $gO^\dagger g\inv = (P_J)^\dagger$.  From part (i) we see that $\ol{(P_J)^\dagger}$ is a cocompact subgroup of $P_{J^\infty}$, which in turn has finite index in $P_J$.  Hence $\ol{O^\dagger}$ is a cocompact normal subgroup of $O$. In particular, the quotient $O/\ol{O^\dagger}$ is a profinite group, and is thus residually discrete.  On the other hand, if $K$ is an open normal subgroup of $O$, then $K$ is closed and it is clear that $(O/K)^\dagger = \triv$, so $\ol{O^\dagger} \le K$.  Thus the compact quotient $O/\ol{O^\dagger}$ is the largest residually discrete quotient of $O$.  Every discrete quotient of $O$ is therefore compact and hence finite.
\end{proof}

\subsection{Locally normal subgroups}
We are now ready to provide a description of closed locally normal subgroups of $G$ generalising Lemma~\ref{lemma:openKM}.

\begin{lemma}\label{lemma:loc_norm}
Let $K\subseteq I$ and let $O$ be an open finite-index subgroup of $P_K$ containing $L_{K^\infty}^+$. Let $H$ be a closed normal subgroup of $O$ such that $HU_K/U_K$ is infinite. Then there exists some nonempty essential $J\subseteq K$ (a union of essential components of $K$) and some spherical $J'\subseteq J^\perp\cap K$ 
such that 
\[
L^+_J U_{J\cup J^\perp} \le \ol{(P_{J \cup J'})^\dagger} \le H \le P_{J\cup J'}.
\]
\end{lemma}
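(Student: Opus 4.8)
The plan is to analyse $H$ through the Levi projection $\pi\co P_K\to L_K=P_K/U_K$ and then sandwich it. First I would note that $\ol H:=\pi(H)=HU_K/U_K$ is closed (as $U_K$ is compact, Lemma~\ref{lemma:Levi_dec_geo}(1)), normal in $\ol O:=\pi(O)$, and infinite by hypothesis; moreover $\ol O$ has finite index in $L_K$ since $O$ does in $P_K$. By Lemma~\ref{lemma:Levi_dec_geo} the subgroup $L^+_{K^\infty}=\prod_i L^+_{K_i}$ (product over the essential components $K_i$ of $K$) has finite index in $L_K$, so $\ol H\cap L^+_{K^\infty}$ is infinite and normal in $L^+_{K^\infty}$. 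Each $L^+_{K_i}$ is topologically simple and nondiscrete: apply Proposition~\ref{prop:KM_simple} to the Levi Kac--Moody group $\cL_{K_i}=\G_{\mc{D}(K_i)}(k)$ of Lemma~\ref{lemma:Levi_dec_geo}(4), noting that $L^+_{K_i}$ is the corresponding root-group-generated subgroup $G^{(1)}$ (and $K_i$ essential means non-finite type). Since a closed normal subgroup of a finite direct product of topologically simple nondiscrete groups is a subproduct, I get $\ol H\cap L^+_{K^\infty}=\prod_{i\in A}L^+_{K_i}$ for some set $A$ of essential components, nonempty as $\ol H$ is infinite. Set $J:=\bigcup_{i\in A}K_i$, a nonempty essential subset of $K$ that is a union of essential components, with $L^+_J\le\ol H$.

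For the lower bound $\ol{(P_J)^\dagger}\le H$ I would use a regular element. Pick a $J$-regular $w$ (Lemma~\ref{lemma:existence_J_regular}) with representative $\tilde w\in N$; building $\tilde w$ from the lifts $\widetilde{r}_{\alpha}\in U_{\alpha}U_{-\alpha}U_{\alpha}$ ($\alpha\in\Delta^{re}(J)$) places it in $L^+_J\le\ol H$. Hence there is $h\in H$ with $\pi(h)=\tilde w$, i.e. $h=\tilde w u$ with $u\in U_K\subseteq U_J\subseteq P_J$ (as $J\subseteq K$ gives $U_K\subseteq U_J$). The key point is that $h$ and $\tilde w$ have the same relative Tits core: since $h\tilde w\inv\in U_J$ and $U_J$ is compact and normal in $P_J$, the subgroups $\ol{\langle h\rangle}$ and $\ol{\langle\tilde w\rangle}$ are both cocompact in $\ol M:=\ol{\langle\tilde w\rangle}\,U_J$, so Lemma~\ref{lemma:dagger_cc} yields $G^\dagger_h=G^\dagger_{\ol M}=G^\dagger_{\tilde w}$, whence $\ol{G^\dagger_h}=\ol{(P_J)^\dagger}$ by Lemma~\ref{lemma:wdagger_PJdagger}. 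On the other hand, $H$ is normal in the open subgroup $O\le\N_G(H)$, so Lemma~\ref{lemma:dagger_containment} (with $D=H$, $X=\{h\}$, $U=O$) gives $G^\dagger_h\le H$; therefore $\ol{(P_J)^\dagger}\le H$.

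For the upper bound I would show $[H:H\cap P_J]<\infty$ and invoke Lemma~\ref{lemma:Jprime}. Since $U_K\le P_J$, this index equals the length of the $\ol H$-orbit of the standard type-$J$ residue inside the $K$-residue of the building of $L_K$. The finite-index subgroup $\ol H\cap L^+_{K^\infty}=\prod_{i\in A}L^+_{K_i}=L^+_J$ fixes that residue, as it acts only in the $J$-directions; and the non-$J$ components contribute only finitely many chambers (spherical case) or a finite image of $\ol H$ (essential case, since $\ol H$ has finite projection to each $L^+_{K_i}$ with $i\notin A$). As type-$J$ residues of the $K$-residue are parametrised by the chambers of the non-$J$ component factors, $\ol H$ has a finite orbit, giving $[H:H\cap P_J]<\infty$. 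Now $O':=H\cap P_J$ satisfies $L^+_J\le O'\le P_J$ and $[H:O']<\infty$, so Lemma~\ref{lemma:Jprime} places $H$ in some $P_{J\cup J'}$ with $J'\subseteq J^\perp$ spherical; intersecting with $P_K$ (and using $J\subseteq K$) lets me take $J'\subseteq J^\perp\cap K$.

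Finally I would close the chain. Because $J'\subseteq J^\perp$ is spherical and $J$ is essential, $J\cup J'$ has essential part $J$, so Corollary~\ref{cor:open_Tc}(i) gives $\ol{(P_{J\cup J'})^\dagger}=\ol{(P_J)^\dagger}\ge L^+_J U_{J\cup J^\perp}$; combined with the two bounds above this is exactly the asserted $L^+_J U_{J\cup J^\perp}\le\ol{(P_{J\cup J'})^\dagger}\le H\le P_{J\cup J'}$. The main obstacle is the upper bound: one must convert the algebraic fact that $\ol H$ has finite projection to the essential components outside $J$ into the geometric orbit-finiteness on residues that feeds Lemma~\ref{lemma:Jprime}. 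By contrast the cocompactness identity $G^\dagger_h=G^\dagger_{\tilde w}$ in the lower bound, though conceptually the crux, is comparatively clean.
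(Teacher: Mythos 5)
Your overall architecture matches the paper's (Levi projection $\pi$, a simplicity argument to extract $J$, a $J$-regular element combined with Lemmas~\ref{lemma:dagger_cc}, \ref{lemma:dagger_containment} and Theorem~\ref{thm:rTc_w_J} for the lower bound, and Lemma~\ref{lemma:Jprime} for the upper bound), but there is a genuine gap at the load-bearing first step. You claim each $L^+_{K_i}$ is topologically simple and that $L^+_{K^\infty}=\prod_i L^+_{K_i}$ is a direct product, so that a closed normal subgroup is a subproduct, yielding $L^+_J\le\ol{H}$. Proposition~\ref{prop:KM_simple} does not give this: applied to the Levi $\cL_{K_i}=\G_{\mc{D}(K_i)}(k)$ of Lemma~\ref{lemma:Levi_dec_geo}(4), it yields topological simplicity of $L^+_{K_i}/Z^+_i$, where $Z^+_i=Z_i\cap L^+_{K_i}$ and $Z_i$ is the finite but possibly nontrivial centre of $L_{K_i}$ --- the kernel of the action on the $K_i$-residue. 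Such central elements can act nontrivially on the rest of $X_+$, so they survive in the effective completion $G$ (this is exactly why the paper needs Lemma~\ref{lemma:Levi_dec_geo}(6), which places them in $\mathcal{H}\cdot U^+$); for the same reason the factors need not intersect trivially, so the product is only almost direct. The paper therefore passes to $L^+_{K^\infty}/Z^+_K\cong\prod_i L^+_{K_i}/Z^+_i$ and applies Goursat's lemma there, obtaining only $\pi(Z^+_K)\bigl(\pi(H)\cap\pi(L^+_{K^\infty})\bigr)=\pi(Z^+_K)\pi(L^+_J)$, i.e.\ $L^+_J\subseteq HZ^+_KU_K$ --- strictly weaker than your $L^+_J\le\ol{H}$.

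This weakening propagates through both of your subsequent steps, though the repairs are routine. In the lower bound, your $h\in H$ only satisfies $h\in\tilde{w}Z^+_KU_K$, so the cocompactness computation must be run in $\ol{\grp{\tilde{w}}}Z^+_KU_J$; this still works, since $Z^+_K\subseteq\mathcal{H}U^+\subseteq P_J$ is compact and centralises $\tilde{w}\in L^+_{K^\infty}$, but you must say so. (The paper sidesteps the choice of $h$ entirely by applying Lemma~\ref{lemma:dagger_cc} with $H:=HZ^+_KU_K$ and $K:=H$.) In the upper bound, your assertion that ``$\ol{H}$ has finite projection to each $L^+_{K_i}$ with $i\notin A$'' rests on the false subproduct statement; the correct justification is that $\ol{H}\cap\pi(L^+_{K^\infty})$ has finite index in $\ol{H}$ by Lemma~\ref{lemma:Levi_dec_geo}(5) and, by the corrected Goursat step, acts on the factor $Y_i$ ($i\notin A$) through the finite group $\pi(Z^+_K)$. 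With that fixed, your geometric route --- finite $\ol{H}$-orbit of the standard $J$-residue in $R_K$, then Lemma~\ref{lemma:Jprime} with $O':=H\cap P_J$ --- is a workable variant of the paper's purely algebraic one (the paper instead applies Lemma~\ref{lemma:Jprime} to $O:=HZ^+_KU_K\cap L^+_{K^\infty}U_K\le L^+_JZ^+_KU_K\subseteq P_J$ inside $HZ^+_KU_K$), but note your version needs the lower bound proved first, since Lemma~\ref{lemma:Jprime} requires $L^+_J\le O'$ and only $\ol{(P_J)^\dagger}\le H$ supplies $L^+_J\le H$ once the central correction is made; the paper's version has no such dependence.
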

\begin{proof}
Consider the projection map $\pi\co P_K=L_K U_K\to L_K/(U_K\cap L_K)$. Note first that $\pi(H)$ is closed: indeed, if $(h_n)_{n\in\bN}\subseteq H$ and $g\in P_K$ are such that $\pi(h_n)\stackrel{n\to\infty}{\to}\pi(g)$, then $g\inv h_n$ belongs to $U^+\cdot \ker\pi=U^+$ for all large enough $n$. Since $U^+$ is compact, we may then assume, up to passing to a subsequence, that $h_n$ converges, say to some $h\in H$ (as $H$ is closed). Hence $\pi(g)=\pi(h)\in\pi(H)$, as desired.

Since $\pi(O)\supseteq \pi(L_{K^\infty}^+)$ by assumption, $\pi(H)\cap \pi(L^+_{K^{\infty}})$ is then a closed normal subgroup of $\pi(L^+_{K^{\infty}})$, which is moreover infinite, as $\pi(H)\cong HU_K/U_K$ is infinite and $\pi(H)\cap \pi(L^+_{K^{\infty}})$ has finite index in $\pi(H)\cap\pi(P_K)=\pi(H)$ by Lemma~\ref{lemma:Levi_dec_geo}(5). Let $K_1,\dots,K_n$ be the components of $K^{\infty}$, so that $L^+_{K^\infty}=L^+_{K_1}\cdot\dots\cdot L^+_{K_n}$ by Lemma~\ref{lemma:Levi_dec_geo}(3),  and for each $i$, let $Z_i$ be the center of $L_{K_i}$. By Lemma~\ref{lemma:Levi_dec_geo}(4), the group $L_{K_i}/Z_i$ is the (effective) geometric completion of $\G_{\mc{D}(J_i)}(k)$ (and $Z_i$ is finite), and hence $L^+_{K_i}/Z^+_i$ is topologically simple by Proposition~\ref{prop:KM_simple}, where $Z^+_i:=Z_i\cap L^+_{K_i}$. In particular, $Z^+_{K}:=\prod_{i=1}^nZ^+_i\subseteq L^+_{K^\infty}$ is finite and $L^+_{K^{\infty}}/Z^+_K \cong L^+_{K_1}/Z^+_1\times\dots\times L^+_{K_n}/Z^+_n$ (direct product). Moreover, by simplicity, for each $i$, the image of $\pi(H)\cap \pi(L^+_{K^{\infty}})$ in $\pi(L^+_{K_i})/\pi(Z^+_i)$ is either trivial or coincides with $\pi(L^+_{K_i})/\pi(Z^+_i)$ since $\pi(H)\cap \pi(L^+_{K^{\infty}})$ is normal in $\pi(L^+_{K^{\infty}})$. By Goursat's lemma, a normal subgroup of a direct product of non-abelian simple groups coincides with the product of a subset of the simple direct factors. Therefore $\pi(Z^+_K)\cdot (\pi(H)\cap \pi(L^+_{K^{\infty}}))=\pi(Z^+_K)\cdot\pi(L^+_J)$ for some union $J$ of components of $K^{\infty}$. Note that $J$ is nonempty as $\pi(H)\cap \pi(L^+_{K^{\infty}})$ is infinite. Therefore,
$$L_J^+\subseteq HZ^+_KU_K\quad\textrm{and}\quad HZ^+_KU_K\cap L^+_{K^\infty} U_K\subseteq L^+_JZ^+_K U_K.$$

Moreover, $HZ^+_KU_K\cap L^+_{K^\infty} U_K$ has finite index in $HZ^+_KU_K$ (because $L^+_{K^\infty} U_K$ has finite index in $P_K$ by Lemma~\ref{lemma:Levi_dec_geo}(5)) and $L^+_JZ^+_K U_K\subseteq P_J$ by Lemma~\ref{lemma:Levi_dec_geo}(6). We are thus in a position to apply Lemma~\ref{lemma:Jprime} (with $O:=HZ^+_KU_K\cap L^+_{K^\infty} U_K$ and $H:=HZ^+_KU_K$), which yields some spherical subset $J'\subseteq J^\perp$ such that $HZ^+_KU_K\subseteq P_{J\cup J'}$. In particular, $H\subseteq P_{J\cup J'}$, and since $H\subseteq P_K$, we may assume that $J'\subseteq J^\perp\cap K$.

Let now $w\in\WW_{J}$ be a $J$-regular element (see Lemma~\ref{lemma:existence_J_regular}) and $\tilde{w}\in G$ be a representative of $w$ in $N\cap L_J^+$.  Then $\tilde{w} \in L^+_J \le HZ^+_KU_K$, so by Lemma~\ref{lemma:dagger_cc} (applied to $G:=G$, $H:=HZ^+_KU_K$ and $K:=H$) we have $G_{\tilde{w}}^\dagger \le G^{\dagger}_{HZ^+_KU_K}=G^{\dagger}_H$. Hence Lemma~\ref{lemma:dagger_containment} (applied to $G:=G$, $D=X:=H$, and $U:=O$) yields $G_{\tilde{w}}^\dagger \le G^{\dagger}_H\le H$.
Theorem~\ref{thm:rTc_w_J} then implies that  
\[
L^+_J\cdot U_{J\cup J^\perp} \le \ol{(P_{J \cup J'})^\dagger}=\ol{(P_{J})^\dagger} = \ol{G_{\tilde{w}}^\dagger} \le H.\qedhere
\]
\end{proof}

\begin{theorem}\label{thm:struct_ncpct_lnsbgr}
Let $H$ be a non-compact closed locally normal subgroup of $G$. Then there exist some $g\in G$, some nonempty essential subset $J\subseteq I$ and some spherical subset $J'\subseteq J^\perp$ such that
\[
L^+_J U_{J\cup J^\perp}\le \ol{(P_{J \cup J'})^\dagger} \le gHg\inv \le P_{J\cup J'}
\]
and such that the normaliser of $gHg\inv$ in $P_{J \cup J'}$ has finite index.
\end{theorem}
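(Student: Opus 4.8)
The plan is to deduce the statement almost directly from Lemma~\ref{lemma:loc_norm}, after first using the classification of open subgroups (Lemma~\ref{lemma:openKM}) to put the normaliser of $H$ into standard position. Since $H$ is locally normal, $\N_G(H)$ is open, so Lemma~\ref{lemma:openKM} furnishes some $g \in G$, an essential subset $J_0 \subseteq I$ and a spherical subset $J_0' \subseteq J_0^\perp$ with
\[
L^+_{J_0}U_{J_0 \cup J_0^\perp} \le g\N_G(H)g\inv \le P_{J_0 \cup J_0'},
\]
the middle group being of finite index in $P_{J_0 \cup J_0'}$. Writing $K := J_0 \cup J_0'$ and $O := g\N_G(H)g\inv = \N_G(gHg\inv)$, I would replace $H$ by $gHg\inv$, so that $O$ is a finite-index open subgroup of $P_K$ and $gHg\inv$ is a closed normal subgroup of $O$.

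Next I would verify the hypotheses of Lemma~\ref{lemma:loc_norm}. Because $J_0'$ is spherical and $J_0' \subseteq J_0^\perp$, there are no edges of $\Gamma_K$ between $J_0$ and $J_0'$, so the non-spherical (essential) components of $K$ are exactly those of $J_0$; hence $K^\infty = J_0$ and $L^+_{K^\infty} = L^+_{J_0} \le O$. Thus $O$ is an open finite-index subgroup of $P_K$ containing $L^+_{K^\infty}$, as required. The one genuinely substantive check is that $gHg\inv U_K / U_K$ is infinite, which is where noncompactness enters: by Lemma~\ref{lemma:Levi_dec_geo}(1), $U_K$ is a compact normal subgroup of $P_K$, so if $gHg\inv U_K/U_K$ were finite then $gHg\inv U_K$ would be compact, forcing its closed subgroup $gHg\inv$ to be compact, contradicting the noncompactness of $H$.

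With these hypotheses confirmed, Lemma~\ref{lemma:loc_norm} applied to $K$, $O$ and $gHg\inv$ yields a nonempty essential $J$ which is a union of essential components of $K$ (so $J \subseteq J_0$ is essential in $I$) and a spherical $J' \subseteq J^\perp \cap K \subseteq J^\perp$ with
\[
L^+_J U_{J \cup J^\perp} \le \ol{(P_{J \cup J'})^\dagger} \le gHg\inv \le P_{J \cup J'},
\]
which is exactly the required chain of inclusions. It remains to establish the finite-index claim about the normaliser. Since $J \cup J' \subseteq K$ we have $P_{J \cup J'} \le P_K$, and
\[
\N_{P_{J \cup J'}}(gHg\inv) = \N_G(gHg\inv) \cap P_{J \cup J'} = O \cap P_{J \cup J'};
\]
as $O$ has finite index in $P_K$, its trace $O \cap P_{J \cup J'}$ has finite index in $P_{J \cup J'}$, completing the argument. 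The proof is therefore essentially bookkeeping layered on top of Lemma~\ref{lemma:loc_norm}; the only nonformal points are the identification $K^\infty = J_0$ and the translation of noncompactness of $H$ into infiniteness of $gHg\inv U_K/U_K$, the latter being the step I expect to require the most care.
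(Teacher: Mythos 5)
Your proof is correct and follows the paper's own argument essentially verbatim: conjugate $\N_G(H)$ into standard position via Lemma~\ref{lemma:openKM}, observe that $K^\infty = J_0$ and that noncompactness of $H$ makes $gHg\inv U_K/U_K$ infinite, and apply Lemma~\ref{lemma:loc_norm}. You merely spell out details (the compactness argument and the finite-index normaliser claim) that the paper dismisses as ``readily follows'', and your justifications are the intended ones.
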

\begin{proof}
Let $O=N_{G}(H)$. Up to conjugating $H$, we may assume by Lemma~\ref{lemma:openKM} that $O$ has finite index in some standard parabolic $P_K$ ($K\subseteq I$) and contains $L_{K^{\infty}}^+$. Moreover, since $H$ is non-compact, the group $HU_K/U_K$ is infinite. The claim now readily follows from Lemma~\ref{lemma:loc_norm}.
\end{proof}

\begin{corollary}
Assume that $J^\perp$ is spherical for every nonempty essential subset $J\subseteq I$. Then every closed locally normal subgroup of $G$ is either compact or open.
\end{corollary}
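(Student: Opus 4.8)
The plan is to establish the dichotomy by ruling out the intermediate case: I show that a closed locally normal subgroup of $G$ that is non-compact must in fact be open. So let $H$ be a non-compact closed locally normal subgroup of $G$; the goal is to prove that $H$ is open.

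First I would invoke Theorem~\ref{thm:struct_ncpct_lnsbgr}, which furnishes some $g \in G$, some nonempty essential subset $J \subseteq I$, and some spherical subset $J' \subseteq J^\perp$ with
\[
L^+_J U_{J \cup J^\perp} \le gHg\inv \le P_{J \cup J'}.
\]
For the conclusion I only need the lower bound, together with the observation that, under the standing hypothesis, the group $L^+_J U_{J\cup J^\perp}$ is itself open.

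This is where the hypothesis enters. Since $J$ is essential, it has no spherical component, and in particular no isolated vertex; but any element of $J \cap J^\perp$ would be an isolated vertex of $\Gamma_J$, so $J \cap J^\perp = \varnothing$. By the definition of $J^\perp$ there are then no edges of $\Gamma_{J\cup J^\perp}$ between $J$ and $J^\perp$, so the components of $J \cup J^\perp$ are exactly those of $J$ together with those of $J^\perp$. As $J$ is essential and $J^\perp$ is spherical by assumption, it follows that $(J \cup J^\perp)^\infty = J$. Now Lemma~\ref{lemma:Levi_dec_geo}(5), applied to the subset $J \cup J^\perp$, shows that $L^+_{(J\cup J^\perp)^\infty} U_{J\cup J^\perp} = L^+_J U_{J\cup J^\perp}$ has finite index in the open subgroup $P_{J \cup J^\perp}$, and is therefore open in $G$.

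To finish, since $gHg\inv$ contains the open subgroup $L^+_J U_{J\cup J^\perp}$, it is a union of cosets of an open subgroup and hence open; as conjugation by $g$ is a homeomorphism of $G$, the subgroup $H$ is open as well. I do not expect any genuine obstacle at this stage, since the substantive work has already been carried out in Theorem~\ref{thm:struct_ncpct_lnsbgr}: the only point requiring care is the combinatorial identity $(J\cup J^\perp)^\infty = J$, which is precisely the place where the assumption on $J^\perp$ is used to force the lower bound of the sandwich to already be open.
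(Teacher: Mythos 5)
Your proof is correct and is essentially the paper's own (implicit) argument: the corollary is stated there without proof as an immediate consequence of Theorem~\ref{thm:struct_ncpct_lnsbgr}, and your key combinatorial step --- that $J$ essential forces $J\cap J^\perp=\varnothing$, no edges join $J$ to $J^\perp$, hence $(J\cup J^\perp)^\infty=J$ under the sphericity hypothesis, so that $L^+_J U_{J\cup J^\perp}$ has finite index in $P_{J\cup J^\perp}$ by Lemma~\ref{lemma:Levi_dec_geo}(5) --- is exactly the intended point. The only micro-gloss is that ``finite index in an open subgroup'' yields openness only for a \emph{closed} subgroup, which holds here since $L^+_J$ is closed and $U_{J\cup J^\perp}$ is compact (or, even more directly, since $gHg^{-1}$ itself is closed of finite index in $P_{J\cup J'}$).
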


\subsection{Centralisers}

We now consider centralisers of locally normal subgroups.  The first point to note is an immediate consequence of Proposition~\ref{prop:KM_simple} and \cite[Theorem~A(ii)]{CRWpart2}.

\begin{lemma}\label{lem:A_semisimple}
Assume that $A$ is indecomposable of non-finite type.  Then every virtually abelian locally normal subgroup of $G^{(1)}$ is trivial.  In particular, $\QZ(G^{(1)}) = \triv$.
\end{lemma}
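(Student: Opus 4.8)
The plan is to observe that $G^{(1)}$ belongs to the class $\ms{S}$ and then invoke \cite[Theorem~A(ii)]{CRWpart2}. First I would verify that $G^{(1)} \in \ms{S}$. By Proposition~\ref{prop:KM_simple}, the hypothesis that $A$ is indecomposable of non-finite type guarantees that $G$ is a compactly generated, nondiscrete \tdlc group and that $G^{(1)}$ is a topologically simple normal subgroup of finite index in $G$. Being of finite index, $G^{(1)}$ is closed and open, hence itself a nondiscrete \tdlc group; and a closed cocompact (in particular, finite-index) subgroup of a compactly generated group is again compactly generated. Thus $G^{(1)}$ is compactly generated, nondiscrete and topologically simple, that is, $G^{(1)} \in \ms{S}$.

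The first assertion is then immediate: \cite[Theorem~A(ii)]{CRWpart2}, applied to $G^{(1)} \in \ms{S}$, states precisely that every virtually abelian closed locally normal subgroup of $G^{(1)}$ is trivial.

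For the ``in particular'' clause, I would argue that every element of the quasi-centre generates such a subgroup. Let $x \in \QZ(G^{(1)})$, so that $\CC_{G^{(1)}}(x)$ is open. Then $\overline{\grp{x}}$ is abelian, being the closure of a cyclic group, and its normaliser in $G^{(1)}$ contains $\CC_{G^{(1)}}(x)$, since any element centralising $x$ normalises $\grp{x}$ and hence, by continuity of conjugation, its closure. As $\CC_{G^{(1)}}(x)$ is open, $\overline{\grp{x}}$ is a closed abelian locally normal subgroup of $G^{(1)}$, so by the first assertion it is trivial, whence $x = 1$. Since $x$ was arbitrary, $\QZ(G^{(1)}) = \triv$.

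The argument is essentially a bookkeeping exercise, and I do not anticipate a substantial obstacle: the only points requiring care are confirming membership in $\ms{S}$ (in particular that passing to a finite-index subgroup preserves compact generation) and checking that the precise hypotheses and conclusion of \cite[Theorem~A(ii)]{CRWpart2} match the statement to be proved.
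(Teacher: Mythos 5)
Your proposal is correct and takes essentially the same route as the paper, whose entire proof is the remark that the lemma is an immediate consequence of Proposition~\ref{prop:KM_simple} (placing $G^{(1)}$ in the class $\ms{S}$) together with \cite[Theorem~A(ii)]{CRWpart2}; your write-up simply makes the same routine verifications explicit, including the standard derivation of $\QZ(G^{(1)})=\triv$. The only microscopic point to watch is that the lemma does not assume the virtually abelian locally normal subgroup is closed, but this reduces at once to the closed case (which your citation handles), since the closure of such a subgroup is again virtually abelian and locally normal and its triviality forces that of the original subgroup.
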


We can now show that the centraliser of $U_{J \cup J^\perp}$ for $J \subseteq I$ essential is trivial, and consequently the centraliser of any nontrivial locally normal subgroup is compact.

\begin{lemma}\label{lemma:centraliser_Ujjperp}
Assume that $A$ is indecomposable of non-finite type. Let $J\subsetneq I$ be essential and nonempty. Then $\CC_{G^{(1)}}(U_{J \cup J^\perp}) = \triv$.
\end{lemma}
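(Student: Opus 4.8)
The plan is to localise the whole computation to the standard parabolic $P_K$, where I write $K := J\cup J^\perp$, and then exploit the Levi decomposition $P_K=L_K\ltimes U_K$ together with the (topological) simplicity of the essential Levi factors. First observe that $K\subsetneq I$: if we had $J\cup J^\perp=I$ then, since $J\cap J^\perp=\varnothing$ and no $i\in J$ is joined to any $j\in J^\perp$, the Coxeter diagram would split as $\Gamma_J\sqcup\Gamma_{J^\perp}$, contradicting indecomposability of $A$. Hence there are positive real roots with support not contained in $K$, so $U_K\neq\triv$; moreover $U_K$ is a nontrivial compact subgroup, normal in the open subgroup $P_K$, hence locally normal.

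The first step is to show $\CC_G(U_K)\le P_K$. The group $U_K$ is the unipotent radical of $P_K$, and as such it fixes the standard $K$-residue $R$ pointwise; I would use the standard fact (from the Levi decomposition of the building) that $R$ is \emph{exactly} the fixed-point set of $U_K$ in $X_+$. Any $g$ centralising $U_K$ in particular normalises $U_K$, so it preserves $\mathrm{Fix}_{X_+}(U_K)=R$ setwise, giving $g\in\mathrm{Stab}_G(R)=P_K$; equivalently $\N_G(U_K)=P_K$, whence $\CC_G(U_K)=\CC_{P_K}(U_K)$.

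The second step is to analyse $C:=\CC_{P_K}(U_K)$ and show it is \emph{virtually abelian}. Writing an element as $g=\ell u$ with $\ell\in L_K$ and $u\in U_K$, the relation $[g,U_K]=\triv$ forces $\ell$ to act on $U_K$ as the inner automorphism $\iota_{u\inv}$; thus the projection $\pi\colon P_K\to L_K$ sends $C$ into the subgroup of $L_K$ acting on $U_K$ by inner automorphisms, while $C\cap U_K=\Z(U_K)$ is abelian and normal in $C$. The crux is that $\pi(C)$ is finite. Decomposing $L_K=\mathcal H\cdot L^+_{K^{\sph}}\cdot L^+_{K^\infty}$ with $L^+_{K^\infty}=L^+_{K_1}\cdots L^+_{K_n}$ over the essential components $K_i$ (Lemma~\ref{lemma:Levi_dec_geo}(2,3)), the factors $\mathcal H$, $L^+_{K^{\sph}}$ and the centres $\Z(L_{K_i})$ are finite (Lemma~\ref{lemma:Levi_dec_geo}(4), Proposition~\ref{prop:KM_simple}); and using the root-group commutation relations one shows that a noncentral element of a simple factor $L^+_{K_i}/\Z(L_{K_i})$ cannot act on $U_K$ as an inner automorphism, since such an element would then centralise a large (cocompact) part of $U_K$, incompatible with $\QZ(G^{(1)})=\triv$ (Lemma~\ref{lem:A_semisimple}). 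Hence $\pi(C)$ is finite, so $C$ is an extension of a finite group by the abelian group $\Z(U_K)$, i.e.\ virtually abelian.

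Finally, $C=\CC_{G^{(1)}}(U_K)$ is normalised by $\N_{G^{(1)}}(U_K)\supseteq P_K\cap G^{(1)}$, which is open in $G^{(1)}$, so $C$ is a virtually abelian closed locally normal subgroup of $G^{(1)}$; Lemma~\ref{lem:A_semisimple} then forces $C=\triv$. I expect the main obstacle to be the finiteness claim in the second step: controlling precisely how the noncompact simple Levi factors act on the unipotent radical $U_K$ and ruling out inner actions is where the detailed Kac--Moody root-group structure is needed. The geometric input $\N_G(U_K)=P_K$ (equivalently $\mathrm{Fix}_{X_+}(U_K)=R$) is the other ingredient, but it is comparatively standard.
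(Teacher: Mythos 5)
There is a genuine gap at the step you yourself flag as the crux: the finiteness of $\pi(C)$. The justification you sketch --- that a noncentral element of a simple Levi factor acting on $U_K$ by an inner automorphism would centralise a ``large (cocompact)'' part of $U_K$, which is incompatible with $\QZ(G^{(1)})=\triv$ --- does not work. The group $U_K$ is compact but \emph{not open} in $G^{(1)}$: since $K\supseteq J$ contains a nonempty essential subset, $U^+(K)$ is infinite, so $U_K$ has infinite index in the compact open subgroup $U^+$. Hence an element whose centraliser contains $U_K$ --- let alone a cocompact closed subgroup of $U_K$, which, $U_K$ being compact, is simply an arbitrary closed subgroup, so that condition carries no force --- need not have open centraliser, and triviality of the quasi-centre yields no contradiction. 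Worse, the assertion that noncentral elements of the factors $L^+_{K_i}$ cannot act on $U_K$ by inner automorphisms is essentially a restatement of the finiteness of $\pi(\CC_{P_K}(U_K))$ you are trying to prove: $\ell$ acts as $\iota_{u\inv}$ exactly when $\ell u\in\CC_{P_K}(U_K)$, so the reduction is close to circular and no actual root-group computation is supplied. (A smaller inaccuracy: in the geometric completion the Levi decomposition is not semidirect; Lemma~\ref{lemma:Levi_dec_geo}(1) only gives $P_K=L_K\cdot U_K$ with $L_K\cap U_K$ possibly nontrivial, so your projection must be taken to $L_K/(U_K\cap L_K)$, as in the proof of Lemma~\ref{lemma:loc_norm}.)

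The paper closes exactly this gap by a different mechanism, which you could substitute wholesale for your second step: it never analyses the centraliser inside the Levi at all. Setting $Z:=\CC_{G^{(1)}}(U_K)$, one observes that $Z$ is closed and normalised by $P_K$ (your step~1, giving $Z\le \N_G(U_K)=P_K$, is consistent with what the paper uses), and that $Z\cap U_K\le \Z(U_K)$ is an abelian locally normal subgroup, hence trivial by Lemma~\ref{lem:A_semisimple}. If $Z\neq\triv$, then $Z$ is infinite (a finite $Z$ would be virtually abelian and locally normal, hence trivial), so $ZU_K/U_K$ is infinite and Lemma~\ref{lemma:loc_norm} applies with $O:=P_K$ and $H:=Z$, producing a nonempty essential $J_1\subseteq K$ with $U_{J_1\cup J_1^\perp}\le Z$; since $K\subseteq J_1\cup J_1^\perp$, this subgroup also lies in $U_K$, hence equals $\triv$, forcing $J_1\cup J_1^\perp=I$ and contradicting the indecomposability of $A$. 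In short, the structural input missing at your crux is precisely Lemma~\ref{lemma:loc_norm}, already available in the paper, and your proof can be repaired by invoking it in place of the inner-automorphism analysis.
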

\begin{proof}
Let $Z = \CC_{G^{(1)}}(U_{J \cup J^\perp})$.  Assume for a contradiction that $Z$ is nontrivial, and set $K:=J \cup J^\perp$. Note first that $Z$ is a closed locally normal subgroup, as it is normal in the normaliser $P_K$ of $U_K$.  By Lemma~\ref{lem:A_semisimple} we have
$$Z\cap U_K=\{1\}.$$
Moreover, $Z$ is infinite (if it is finite, then it is virtually abelian and hence trivial, a contradiction). In particular, $ZU_K/U_K$ is infinite. We can thus apply Lemma~\ref{lemma:loc_norm} (with $O:=P_K$ and $H:=Z$): there exists some nonempty essential subset $J_1\subseteq K$ (a union of essential components of $K$) such that $U_{J_1\cup J_1^\perp}\subseteq Z$. Since $K\subseteq J_1\cup J_1^\perp$, so that $U_{J_1\cup J_1^\perp}\subseteq U_K$, we deduce that 
$$U_{J_1\cup J_1^\perp}\subseteq Z\cap U_K=\{1\},$$
that is, $J_1\cup J_1^\perp=I$. The indecomposability of $A$ then implies that $J_1=K=I$ and hence also that $J=\varnothing$ or $J=I$, a contradiction.
\end{proof}

\begin{corollary}\label{cor:Ristcpct}
Assume that $A$ is indecomposable of non-finite type. Let $H$ be a closed locally normal subgroup of $G^{(1)}$, and suppose that $H$ and $\CC_{G^{(1)}}(H)$ are both nontrivial.  Then $H$ and $\CC_{G^{(1)}}(H)$ are both compact.
\end{corollary}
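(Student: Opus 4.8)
The plan is to first establish the following one-sided statement, from which the corollary will follow by a short symmetric argument: \emph{if $H$ is a non-compact closed locally normal subgroup of $G^{(1)}$, then $\CC_{G^{(1)}}(H)=\triv$}. Granting this, suppose $H$ and $Z:=\CC_{G^{(1)}}(H)$ are both nontrivial. The subgroup $Z$ is closed (being a centraliser) and locally normal: any element normalising $H$ normalises $Z$, so $\N_{G^{(1)}}(Z)\supseteq\N_{G^{(1)}}(H)$ is open. If $H$ were non-compact the statement would give $Z=\triv$, a contradiction; hence $H$ is compact. Likewise $H\le\CC_{G^{(1)}}(Z)$ is nontrivial, so if $Z$ were non-compact the statement applied to $Z$ would give $\CC_{G^{(1)}}(Z)=\triv$, again a contradiction; hence $Z$ is compact.

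To prove the one-sided statement, I would first check that $H$ is closed and locally normal \emph{in $G$}: since $G^{(1)}$ is a closed finite-index, hence open, subgroup of $G$, the normaliser $\N_G(H)\supseteq\N_{G^{(1)}}(H)$ is open in $G$, and $H$ is closed in $G$. Thus Theorem~\ref{thm:struct_ncpct_lnsbgr} applies and yields some $g\in G$, a nonempty essential $J\subseteq I$ and a spherical $J'\subseteq J^\perp$ with $L^+_J U_{J\cup J^\perp}\le gHg\inv\le P_{J\cup J'}$. As $G^{(1)}\unlhd G$, conjugation by $g$ gives $\CC_{G^{(1)}}(H)=g\inv\CC_{G^{(1)}}(gHg\inv)g$, so it suffices to show $\CC_{G^{(1)}}(gHg\inv)=\triv$.

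The argument now splits according to whether $J=I$, and the case $J=I$ is the point not covered by Lemma~\ref{lemma:centraliser_Ujjperp}, so it is where I expect the only real care to be needed. If $J\subsetneq I$, then $gHg\inv\supseteq U_{J\cup J^\perp}$, whence $\CC_{G^{(1)}}(gHg\inv)\le\CC_{G^{(1)}}(U_{J\cup J^\perp})=\triv$ by Lemma~\ref{lemma:centraliser_Ujjperp}. If $J=I$, then indecomposability of $A$ together with non-finite type (forcing $|I|\ge 2$) gives $I^\perp=\varnothing$, so $L^+_J=L^+_I=G^{(1)}$ and therefore $\CC_{G^{(1)}}(gHg\inv)\le\CC_{G^{(1)}}(G^{(1)})=\Z(G^{(1)})$; the latter is contained in $\QZ(G^{(1)})=\triv$ by Lemma~\ref{lem:A_semisimple}. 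In either case $\CC_{G^{(1)}}(gHg\inv)=\triv$, which completes the proof of the one-sided statement and hence of the corollary.
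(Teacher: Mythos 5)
Your proof is correct, and at its core it uses the same two ingredients as the paper: Theorem~\ref{thm:struct_ncpct_lnsbgr} to force a noncompact closed locally normal subgroup to contain (a conjugate of) $L^+_J U_{J\cup J^\perp}$, and Lemma~\ref{lemma:centraliser_Ujjperp} to kill the centraliser, with the symmetry $H \le \CC_{G^{(1)}}(\CC_{G^{(1)}}(H))$ closing the loop. There are two genuine differences in the execution. First, the paper applies Theorem~\ref{thm:struct_ncpct_lnsbgr} with $G:=G^{(1)}$, which is legitimate because $G^{(1)}$ is itself a complete geometric Kac--Moody group (the geometric completion of $\G_{\coad(\mc{D})}(k)$, as noted after Proposition~\ref{prop:KM_simple}); you instead upgrade $H$ to a closed locally normal subgroup of $G$ (using that $G^{(1)}$ is closed of finite index, hence open) and apply the theorem in $G$, which avoids invoking that coadjoint identification at the cost of a harmless conjugation bookkeeping step. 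Second, and more substantively, your explicit case split at $J=I$ fixes a point the paper's two-line proof passes over silently: when $J=I$ the subgroup $U_{J\cup J^\perp}=U_I$ is trivial (since $\Delta^{re+}_I=\varnothing$), so Lemma~\ref{lemma:centraliser_Ujjperp} --- which requires $J\subsetneq I$ --- gives nothing, and one must instead observe that $gHg\inv \supseteq L^+_I = G^{(1)}$, whence $\CC_{G^{(1)}}(gHg\inv) \le \Z(G^{(1)}) \le \QZ(G^{(1)}) = \triv$ by Lemma~\ref{lem:A_semisimple}, exactly as you do. One cosmetic remark: in that case the appeal to $I^\perp = \varnothing$ is superfluous, since $L^+_I = G^{(1)}$ holds directly from $\Delta^{re}(I) = \Delta^{re}$; otherwise the argument is complete and, on the $J=I$ case, slightly more careful than the paper's.
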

\begin{proof}
Let $K = \CC_{G^{(1)}}(H)$.  Then $H$ and $K$ both have nontrivial centraliser, so by Lemma~\ref{lemma:centraliser_Ujjperp}, they cannot contain any subgroup $U_{J \cup J^\perp}$ (or their conjugates)  for $J \subseteq I$ essential and nonempty.  Hence $H$ and $K$ are both compact by Theorem~\ref{thm:struct_ncpct_lnsbgr} (applied to $G:=G^{(1)}$).
\end{proof}

\subsection{The number of ends}\label{subsec:ends}
We next show that $G$ is one-ended if and only if its Weyl group $\WW$ is one-ended. By \cite[Theorem~8.7.2]{DavisCox}, this latter condition can be reformulated in terms of the \emph{nerve} of $\WW$.

Recall that the \defbold{nerve} of a Coxeter system $(\WW,I)$ is the abstract simplicial complex formed by the poset of all nonempty spherical subsets $J \subseteq I$; write $\mc{N}_A$ for the nerve of the Coxeter system associated to the generalised Cartan matrix $A$.  We say the nerve is \defbold{strongly connected} if in its geometric realisation $|\mc{N}_A|$, the removal of any one of the closed simplices (including the empty simplex) results in a connected space.

This condition can also be restated in terms of the Coxeter diagram of $(\WW,I)$, as follows. 
The {\bf finite graph} $\Gamma_f(A)$ of $A$ is the graph with vertex set $I$ and an edge between $i$ and $j$ if and only if $a_{ij}a_{ji}\leq 3$ (equivalently, if $s_is_j\in\WW$ has finite order). We call $\Gamma_f(A)$ {\bf strongly connected} if it is connected and if for every spherical subset $J\subseteq I$, the subgraph of $\Gamma_f(A)$ with vertex set $I\setminus J$ is still connected. For instance, if $A$ is $2$-spherical and indecomposable, then $\Gamma_f(A)$ is strongly connected.

\begin{prop}\label{prop:one-ended}
Assume that $\WW_A$ is infinite. The following conditions are equivalent:
\begin{enumerate}
\item
$G$ is one-ended.
\item
$\WW_A$ is one-ended.
\item 
$\mc{N}_A$ is strongly connected.
\item
$\Gamma_f(A)$ is strongly connected.
\end{enumerate}
\end{prop}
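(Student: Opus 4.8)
The plan is to prove the cycle of equivalences by treating $(2)\Leftrightarrow(3)$, $(3)\Leftrightarrow(4)$ and $(1)\Leftrightarrow(2)$ separately. The equivalence $(2)\Leftrightarrow(3)$ is exactly \cite[Theorem~8.7.2]{DavisCox} applied to $(\WW_A,I)$, using that $\WW_A$ is infinite, so there is nothing to prove there. The real work is the combinatorial equivalence $(3)\Leftrightarrow(4)$ and the geometric equivalence $(1)\Leftrightarrow(2)$, and I expect only the latter to present a genuine difficulty.

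For $(3)\Leftrightarrow(4)$ I would argue as follows. The $1$-skeleton of the nerve $\mc{N}_A$ is precisely $\Gamma_f(A)$, since an edge $\{i,j\}$ is a simplex of $\mc{N}_A$ iff $\{i,j\}$ is spherical iff $a_{ij}a_{ji}\le 3$. Taking $J=\varnothing$ (the empty simplex), the first requirement in each of $(3)$ and $(4)$ reduces to connectedness of $|\mc{N}_A|$, equivalently of $\Gamma_f(A)$. For a nonempty spherical $J$ the key observation is that every subset of a spherical set is spherical: hence pushing to zero the barycentric coordinates indexed by $J$ defines a deformation retraction of $|\mc{N}_A|\setminus\overline{|J|}$ onto the full subcomplex of $\mc{N}_A$ spanned by $I\setminus J$ (at each stage the support of a point only shrinks, so it remains a simplex of $\mc{N}_A$). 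Connectedness of that subcomplex is detected by its $1$-skeleton, which is the induced subgraph of $\Gamma_f(A)$ on $I\setminus J$. Thus removing the closed simplex $\overline{|J|}$ leaves $|\mc{N}_A|$ connected iff deleting the vertices $J$ leaves $\Gamma_f(A)$ connected, and ranging over all spherical $J$ yields $(3)\Leftrightarrow(4)$.

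For $(1)\Leftrightarrow(2)$ I would compare the $\CAT$ Davis realisations of the building $X_+$ and of the fundamental apartment $\Sigma\cong\Sigma(\WW,S)$, via the retraction $\rho=\rho_{\Sigma,C_0}\co X_+\to\Sigma$ onto $\Sigma$ based at the fundamental chamber $C_0$. This $\rho$ fixes $\Sigma$ pointwise and preserves the Weyl distance $\delta(C_0,\cdot)$, hence the gallery distance to $C_0$; since $X_+$ is locally finite its realisation is proper, so $\rho$ is a proper continuous map, and $\rho\circ\iota=\mathrm{id}_\Sigma$ for the (proper) inclusion $\iota\co\Sigma\hookrightarrow X_+$. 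Consequently $\iota$ induces an injection on spaces of ends, and as $\WW_A$ infinite makes $\Sigma$ unbounded, one-endedness of $X_+$ forces $\Sigma$ to have exactly one end: this is $(1)\Rightarrow(2)$. For the converse I would show $X_+\setminus B(C_0,r)$ is connected for every $r$, which together with unboundedness gives one-endedness. Radial projection toward $C_0$ deformation retracts $\{x:d(C_0,x)\ge r\}$ onto the sphere $S(C_0,r)$, both in $X_+$ and in $\Sigma$, and $(2)$ makes the outer region $\Sigma\setminus B(C_0,r)$ connected for all large $r$; since $\rho$ carries the outer region of $X_+$ onto that of $\Sigma$, it would then suffice to know the fibres of $\rho$ stay connected far out, i.e.\ that any two chambers $D,D'$ with $\delta(C_0,D)=\delta(C_0,D')=w$ are joined by a gallery through chambers of Weyl length $\ge\ell(w)$.

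This last statement — equivalently, that $\iota\circ\rho$ is properly homotopic to $\mathrm{id}_{X_+}$ — is the step I expect to be the main obstacle. The naive homotopy along $\CAT$ geodesics from $x$ to $\rho(x)$ is \emph{not} proper: the function $t\mapsto d(C_0,\cdot)$ along such a geodesic is convex with equal endpoint values $\ell(w)$, so it dips toward $C_0$ and the homotopy drags far-away points through bounded regions. The fix must instead produce combinatorial paths between $D$ and $\rho(D)$ that remain far from $C_0$, exploiting thickness of $X_+$ together with the parallel-residue and projection machinery recalled in Section~\ref{subsection:BNpairsaB} to ``go around'' at constant Weyl distance rather than folding inward. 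Establishing this far-connectivity of the $\delta(C_0,\cdot)$-spheres of the building is the crux of the whole proposition.
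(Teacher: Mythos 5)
Your treatment of (2)$\Leftrightarrow$(3), (3)$\Leftrightarrow$(4) and of the implication (1)$\Rightarrow$(2) is correct and essentially matches the paper. The paper likewise reduces (1) to one-endedness of $X_+$ via the geometric action on the Davis realisation, and proves the direction ``$X_+$ one-ended $\Rightarrow$ $\Sigma$ one-ended'' with the retraction $\rho=\rho_{\Sigma,C_0}$, which by \cite[Proposition~12.18]{BrownAbr} preserves distances from the centre $x_0$ of the fundamental chamber and hence maps $X_+\setminus B$ into $\Sigma\setminus(B\cap\Sigma)$; your packaging of this via properness of $\rho$ and injectivity of the induced map on ends is an equivalent formulation. (Your barycentric-retraction argument for (3)$\Leftrightarrow$(4) is finer than what the paper records --- it simply notes that connectedness is detected on $1$-skeleta --- but it is sound.)

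The genuine gap is in (2)$\Rightarrow$(1), exactly where you flag it, and your proposed reduction is defective in two ways. First, ``the fibres of $\rho$ stay connected far out'' is literally false in any thick building: the fibre of $\rho$ over an interior point of the chamber $wC_0$ consists of exactly one point in each chamber $D$ with $\delta(C_0,D)=w$, a discrete set, so every fibre of cardinality greater than one is disconnected; only the coarse gallery statement could serve. Second, that coarse statement --- any two chambers at Weyl distance $w$ from $C_0$ are joined by a gallery through chambers at Weyl length at least $\ell(w)$, or at least staying far from $C_0$ --- is a nontrivial ``connectivity at infinity of combinatorial spheres'' assertion: it fails without one-endedness of $\WW$ (thick trees give counterexamples), and you offer no argument for it under hypothesis (2), so the implication remains unproven. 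The paper's proof shows this obstacle can be bypassed entirely: by \cite[Lemma~2.2]{geohyperbolic}, given the point $x_1=r_1(N)$ and the proper ray $R_2$, there is a \emph{single} apartment $\Sigma'$ containing $x_1$ and a subray of $R_2$. After sliding $x_1$ within a chamber $C_{x_1}$ of $\Sigma'$ to a point on a geodesic line of $\Sigma'$ (by $\WW$-homogeneity, every point of an apartment lies in a chamber meeting a geodesic line) and using convexity of the ball $B$ to extract a subray of that line avoiding $B$, one-endedness of the one apartment $\Sigma'$ --- this is where hypothesis (2) enters --- connects the two rays inside $\Sigma'\setminus B\subseteq X_+\setminus B$. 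This ``point plus subray lie in a common apartment'' lemma is the missing idea in your sketch: it collapses the global connectivity problem you tried to attack through sphere connectivity into a single Coxeter complex, where (2) applies directly.
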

\begin{proof}
The equivalence of (3) and (4) is clear, as $\Gamma_f(A)$ is just the $1$-skeleton of $\mc{N}_A$, and as a simplicial complex is connected if and only if its $1$-skeleton is connected. 
The equivalence of (2) and (3) is \cite[Theorem~8.7.2]{DavisCox}. For the equivalence of (1) and (2), since $G$ acts geometrically on (the Davis realisation of) its associated building $X_+$ (which is both proper and geodesic, cf. \S\ref{subsection:BNpairsaB}), we see that $G$ is one-ended if and only if $X_+$ is one-ended.  Meanwhile, each apartment of $X_+$ is quasi-isometric to $\WW_A$, so that the equivalence of (1) and (2) follows from Lemma~\ref{lemma:XoneendediffSigmais} below.
\end{proof}

\begin{lemma}\label{lemma:XoneendediffSigmais}
Let $X$ be the Davis realisation of a building. Then $X$ is one-ended if and only if its apartments are one-ended.
\end{lemma}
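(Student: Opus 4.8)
The plan is to exploit three structural features of the Davis realisation $X$ of a building of type $(\WW,S)$: it is a complete $\CAT$ space in which each apartment $A$ is a convex subspace isometric to the Davis complex $\Sigma=|\Sigma(\WW,S)|_{\CAT}$ (which is itself proper and geodesic); any two points of $X$ lie in a common apartment; and for each chamber $C$ of an apartment $A$ the retraction $\rho_{A,C}\co X\to A$ is $1$-Lipschitz, fixes $A$ pointwise, and restricts to an isometry on every apartment containing $C$. Fixing a basepoint $x_0$ in the closed fundamental chamber $C_0\subseteq A$, the last property yields the crucial identity $d(x_0,\rho_{A,C_0}(y))=d(x_0,y)$ for all $y\in X$ (apply $\rho_{A,C_0}$ on an apartment through $C_0$ and $y$, on which it is an isometry fixing $x_0$). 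Hence $\rho_{A,C_0}^{-1}(\overline B_A(x_0,R))=\overline B_X(x_0,R)$, so the retraction carries the complement of an $X$-ball into the complement of the corresponding $A$-ball. Since compact sets are bounded, hence (in the locally finite, and so proper, setting of interest) contained in such balls, it suffices throughout to separate or connect using complements of balls centred at $x_0$.

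For the forward implication, assume $X$ is one-ended and fix an apartment $A$ containing $C_0$. Given proper rays $r_1,r_2$ in $A$ and a compact $C_A\subseteq A$, enlarge $C_A$ to a ball $\overline B_A(x_0,R)$. The $r_i$ are proper rays of $X$ as well (as $A$ is closed in $X$), so one-endedness of $X$ yields, for large $N$, a path $\eta$ in $X\setminus\overline B_X(x_0,R)$ joining $r_1(N)$ to $r_2(N)$. Applying $\rho_{A,C_0}$ and using $\rho_{A,C_0}^{-1}(\overline B_A(x_0,R))=\overline B_X(x_0,R)$, the image $\rho_{A,C_0}(\eta)$ is a path in $A\setminus\overline B_A(x_0,R)\subseteq A\setminus C_A$ joining $r_1(N)$ to $r_2(N)$; as the tails themselves lie outside $C_A$ for large $N$, this shows that $A$, and hence $\Sigma$, is one-ended.

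For the converse, assume $\Sigma$ is one-ended; I will show that for every $R$ there is $R'$ such that any two points $p,q$ with $d(x_0,p),d(x_0,q)\ge R'$ are joined by a path in $X\setminus\overline B_X(x_0,R)$, which, by the ray definition of one-endedness, is exactly what is needed. Choose an apartment $A\ni p,q$ and let $m=\pi_A(x_0)\in A$ be the $\CAT$ nearest-point projection, so that $d(x_0,z)\ge d(m,z)$ for all $z\in A$, with the sharper estimate $d(m,z)^2\ge d(x_0,z)^2-d(x_0,m)^2$. If $d(x_0,m)>R$, then $A\cap\overline B_X(x_0,R)=\varnothing$ and any path in the connected space $A$ works. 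Otherwise $d(x_0,m)\le R$, and taking $R':=\sqrt{(R'')^2+R^2}$ (where $R''=R''(R)$ is a one-endedness modulus for $\Sigma$, uniform in the basepoint since $\WW$ acts cocompactly on $\Sigma$) gives $d(m,p),d(m,q)\ge R''$; one-endedness of $A\cong\Sigma$ based at $m$ then joins $p,q$ by a path in $A\setminus\overline B_A(m,R)$, which avoids $\overline B_X(x_0,R)$ because $d(x_0,\cdot)\ge d(m,\cdot)$ on $A$.

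The routine reductions (compacts inside balls, properness of $X$ in the locally finite case, extraction of the connecting path $\eta$ from the ray definition) are straightforward; the one genuinely delicate point is the converse, where the apartment $A$ containing two far-apart points need not contain the fixed basepoint $x_0$. I expect this basepoint mismatch to be the main obstacle. The device for overcoming it is to transfer to the nearest-point projection $m=\pi_A(x_0)$ and use the $\CAT$ inequality $d(x_0,z)\ge d(m,z)$ on $A$ together with a basepoint-uniform one-endedness modulus for $\Sigma$ (available by cocompactness of the $\WW$-action); the clean case split on whether $d(x_0,m)\le R$ then closes the argument.
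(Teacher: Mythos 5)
Your proposal is correct in substance, but in the hard direction it takes a genuinely different route from the paper, so let me compare. The direction ``$X$ one-ended $\Rightarrow$ apartments one-ended'' is essentially the paper's argument: both push a connecting path from $X\setminus B$ into the apartment via the retraction $\rho_{A,C_0}$ and the fact that it preserves distances from $x_0$ (the paper quotes this from \cite[Proposition~12.18]{BrownAbr}; your derivation of it from the isometry on apartments through $C_0$ is a correct proof, and since all apartments are isometric it suffices to treat one apartment through $C_0$, as you do). For ``apartments one-ended $\Rightarrow$ $X$ one-ended'', the paper picks proper rays, invokes a building-theoretic lemma producing an apartment containing one ray's point $x_1$ together with a subray of the other ray, moves $x_1$ onto a geodesic line through a chamber, and uses convexity of the ball; you instead stay entirely within $\CAT$ geometry: two far points lie in a common apartment $A$, the nearest-point projection $m=\pi_A(x_0)$ together with $d(x_0,z)\geq d(m,z)$ for $z\in A$ converts the ambient ball into an apartment ball centred at $m$, and one-endedness of $A$ based at $m$ finishes (your case split on $d(x_0,m)\leq R$ and the Pythagorean estimate are both correct). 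This buys independence from the cited lemma on apartments containing subrays, at the cost of your ``one-endedness modulus'': for every $R$ there is $R''$ such that any two points at distance at least $R''$ from the basepoint are joined by a path avoiding the closed $R$-ball. This is the one ingredient you assert without proof, and it is not the paper's definition of one-endedness; it is, however, a true standard fact for proper geodesic spaces and should be proved or referenced. Sketch: balls in a geodesic space are path-connected, so components of open subsets are open path components; by properness only finitely many components of $\Sigma\setminus \ol{B}(x_0,R)$ contain points at distance greater than $R+1$ (an infinite family would yield one point per component on the sphere of radius $R+\tfrac{1}{2}$, accumulating at a point whose small geodesic ball would meet infinitely many of them); and at most one of these finitely many components is unbounded, since from an unbounded component one extracts by Arzel\`a--Ascoli a geodesic ray from $x_0$ whose tail stays in that component (along a geodesic issued from $x_0$ one has $d(x_0,\gamma(t))=t$, so it never re-enters the ball), and two such rays in distinct components would contradict the ray definition of one-endedness. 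Your uniformisation of the modulus over basepoints via the cocompact $\WW$-action is fine, as is your explicit restriction to the locally finite (hence proper) case --- a restriction the paper's proof also implicitly makes, since it applies one-endedness of $X$ to metric balls.
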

\begin{proof}
Let $(\WW,S)$ be the type of $X$ (note that we may assume $\WW$ to be infinite). 

Note first that for any apartment $\Sigma$ of $X$ and any point $x\in \Sigma$, there exist a geodesic line $L_x\subseteq \Sigma$ and a (closed) chamber $C_x$ of $\Sigma$ containing $x$ such that $C_x\cap L_x$ is nonempty. Indeed, as $\Sigma$ is isometric to the Davis complex of $(\WW,S)$, we may assume that $X=\Sigma=|\Sigma(\WW,S)|_{\CAT}$. One can then consider any geodesic line $L$ in $\Sigma$ (e.g. the axis of an element of infinite order in $\WW\subseteq \Aut(\Sigma)$) and any chamber $C$ of $\Sigma$ intersecting $L$, and then pick $w\in \WW$ such that $wx\in C$. Then $x$ belongs to the chamber $C_x:=w\inv C$ and $C_x$ intersects the geodesic line $L_x:=w\inv L\subseteq\Sigma$, as desired.

Assume now that the apartments of $X$ are one-ended. Let $r_1\co [0,\infty)\to X$ and $r_2\co [0,\infty)\to X$ be proper geodesic rays. Let $B\subseteq X$ be a ball in $X$. Let $N\in\bN$ be such that all the (closed) chambers intersecting either $R_1:=r_1([N,\infty))$ or $R_2:=r_2([N,\infty))$ are disjoint from $B$. We claim that $R_1$ and $R_2$ are contained in the same path component of $X\setminus B$, as desired. Let $x_i:=r_i(N)\in R_i$ be the origin of $R_i$ ($i=1,2$), and let us show that $x_1$ and $x_2$ are connected in $X\setminus B$. By \cite[Lemma~2.2]{geohyperbolic}, there exists an apartment $\Sigma$ of $X$ containing both $x_1$ and a subray $R_2'=r_2([N',\infty))$ of $R_2$ (for some $N'\geq N$). Let $C_{x_1}$ be a chamber of $\Sigma$ containing $x_1$, and $L_{x_1}\subseteq \Sigma$ be a geodesic line intersecting $C_{x_1}$ (say at $\overline{x}_1\in C_{x_1}\cap L_{x_1}$), as provided by the previous paragraph. By construction, $C_{x_1}$ is disjoint from $B$, and since it is path-connected, it suffices to prove that $\overline{x}_1$ and $x_2$ are connected in $X\setminus B$. Since $\overline{x}_1\notin B$ and $B$ is convex, we find some geodesic ray $R_1'\subseteq L_{x_1}$ with origin $\overline{x}_1$ and not intersecting $B$. Since $\Sigma$ is one-ended, there exist some $x'_1\in R_1'$ and some $x'_2\in R_2'$ connected by a path in $\Sigma\setminus B\subseteq X\setminus B$. Since $\overline{x}_1$ and $x_1'$ (resp. $x_2$ and $x_2'$) are in the same path component of $X\setminus B$, the claim follows.

Conversely, assume that $X$ is one-ended. Let $\Sigma$ be an apartment of $X$, let $r_1\co [0,\infty)\to \Sigma$ and $r_2\co [0,\infty)\to \Sigma$ be proper geodesic rays, and let $B_{\Sigma}\subseteq \Sigma$ be a ball in $\Sigma$ with center $x_0\in \Sigma$ and radius $r$ (say $x_0$ is the barycenter of a chamber $C$ of $\Sigma$). Let $B\subseteq X$ be the ball of $X$ of center $x_0$ and radius $r$, so that $B_{\Sigma}=B\cap \Sigma$. By assumption, there exists some $N\in\bN$ such that $R_1:=r_1([N,\infty))$ and $R_2:=r_2([N,\infty))$ are contained in the same path component of $X\setminus B$. Let $x_1\in R_1$ and $x_2\in R_2$, and let $\Gamma\subseteq X\setminus B$ be a path from $x$ to $y$. Since the \emph{retraction} $\rho=\rho_{\Sigma,C}\co X\to \Sigma$ onto $\Sigma$ centered at $C$ preserves the distances from $x_0$ (see e.g. \cite[Proposition~12.18]{BrownAbr}), it maps $X\setminus B$ to $\Sigma\setminus B_{\Sigma}$, and hence the path $\rho(\Gamma)$ from $x$ to $y$ is contained in $\Sigma\setminus B_{\Sigma}$. Thus, $R_1$ and $R_2$ are contained in the same path component of $\Sigma\setminus B_{\Sigma}$, as desired.
\end{proof}

\subsection{Endgame}

Putting all the ingredients together, we now have a sufficient condition for $G$ (or equivalently its finite index normal subgroup $G^{(1)}$) to be locally indecomposable.

\begin{theorem}\label{thm:LDG}
Assume that $A$ is indecomposable, that $G$ is locally finitely generated, and that $\Gamma_f(A)$ is strongly connected. Then $G$ is locally indecomposable.
\end{theorem}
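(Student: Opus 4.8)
The plan is to deduce the statement from the general local indecomposability criterion Theorem~\ref{thm:cpctend:bis}, applied not to $G$ itself but to its finite-index topologically simple normal subgroup $G^{(1)}$. By Proposition~\ref{prop:KM_simple}, $G^{(1)}$ is a finite-index, hence open, subgroup of $G$, so by Lemma~\ref{lem:loc_indecomp} it suffices to prove that $G^{(1)}$ is locally indecomposable. Before doing so I would dispose of the degenerate case: if $A$ is of finite type then $\WW_A$ is finite, $G$ is a finite group of Lie type and is therefore discrete, so that every local direct factor is discrete and $G$ is trivially locally indecomposable. We may thus assume that $A$ is indecomposable of non-finite type, in which case $\WW_A$ is infinite and Proposition~\ref{prop:KM_simple} guarantees that $G^{(1)}$ is a nontrivial, compactly generated, nondiscrete (indeed topologically simple) \tdlc group; note also that $G^{(1)}$ cannot be compact, since a compact topologically simple \tdlc group would be finite and hence discrete.

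It then remains to verify the six remaining hypotheses of Theorem~\ref{thm:cpctend:bis} for $G^{(1)}$. One-endedness: since $\Gamma_f(A)$ is strongly connected and $\WW_A$ is infinite, $G$ is one-ended by Proposition~\ref{prop:one-ended}; as $G^{(1)}$ is a cocompact open subgroup of $G$ it is quasi-isometric to $G$, and one-endedness is a quasi-isometry invariant, so $G^{(1)}$ is one-ended. Triviality of the quasi-centre $\QZ(G^{(1)})=\triv$ is precisely Lemma~\ref{lem:A_semisimple}. The absence of nontrivial compact normal subgroups follows from topological simplicity together with the noncompactness of $G^{(1)}$ noted above. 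For the two local finiteness conditions, the hypothesis that $G$ is locally finitely generated yields, via Lemma~\ref{lem:fqt_local}(1), that every compact open subgroup of $G$ is of finite quotient type; since $G^{(1)}$ is open, its compact open subgroups are compact open in $G$, so $G^{(1)}$ is locally of finite quotient type, and moreover no open subgroup of $G^{(1)}$ (being an open subgroup of $G$) has an infinite discrete quotient by Corollary~\ref{cor:open_Tc}(ii). Finally, for the centraliser condition, let $H$ be a nontrivial closed locally normal subgroup of $G^{(1)}$: if $\CC_{G^{(1)}}(H)=\triv$ it is compact trivially, and otherwise both $H$ and $\CC_{G^{(1)}}(H)$ are nontrivial, so Corollary~\ref{cor:Ristcpct} shows that $\CC_{G^{(1)}}(H)$ is compact. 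In either case the centraliser of $H$ is compact, and all hypotheses of Theorem~\ref{thm:cpctend:bis} are met, so $G^{(1)}$, and hence $G$, is locally indecomposable.

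Since every substantial assertion used here has already been established in the preceding results, I expect the proof to be essentially an assembly argument with no single hard computation. The points requiring the most care are bookkeeping rather than genuine obstacles: cleanly reducing from $G$ to $G^{(1)}$ and checking that each hypothesis transfers (one-endedness via quasi-isometry, local finite quotient type and the discrete-quotient condition because $G^{(1)}$ is open in $G$), and correctly handling the degenerate sub-cases --- the finite-type matrix, where $G$ is discrete, and the situation where a locally normal subgroup has trivial centraliser, which the statement of Corollary~\ref{cor:Ristcpct} does not address directly.
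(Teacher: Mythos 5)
Your proof is correct and follows essentially the same route as the paper: reduce to $G^{(1)}$ via Lemma~\ref{lem:loc_indecomp}, dispose of the finite-type case, and verify the hypotheses of Theorem~\ref{thm:cpctend:bis} using Proposition~\ref{prop:one-ended}, Lemma~\ref{lem:A_semisimple}, Proposition~\ref{prop:KM_simple}, Lemma~\ref{lem:fqt_local}(1), Corollary~\ref{cor:open_Tc} and Corollary~\ref{cor:Ristcpct}. Your extra bookkeeping (the quasi-isometry transfer of one-endedness, noncompactness of $G^{(1)}$, and the trivial-centraliser subcase of Corollary~\ref{cor:Ristcpct}) only makes explicit what the paper leaves implicit.
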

\begin{proof}
Since $G^{(1)}$ is open in $G$, it is sufficient to show that $G^{(1)}$ is locally indecomposable by Lemma~\ref{lem:loc_indecomp}. Note that we may assume $A$ to be of non-finite type, since otherwise $G^{(1)}$ is finite and hence trivially locally indecomposable. 

We now check that $G^{(1)}$ satisfies the hypotheses of Theorem~\ref{thm:cpctend:bis}. Specifically: $G^{(1)}$ is one-ended by Proposition~\ref{prop:one-ended}; $\QZ(G^{(1)}) = \triv$ by Lemma~\ref{lem:A_semisimple}; $G^{(1)}$ has no nontrivial compact normal subgroups by Proposition~\ref{prop:KM_simple}; $G^{(1)}$ is locally finitely generated by assumption (since it is open in $G$) and hence locally of finite quotient type by Lemma~\ref{lem:fqt_local}(1); no open subgroup of $G^{(1)}$ has an infinite discrete quotient by Corollary~\ref{cor:open_Tc}; and the centraliser of every nontrivial closed locally normal subgroup is compact by Corollary~\ref{cor:Ristcpct}.
\end{proof}

Together with Lemma~\ref{lemma:loc_fintype}, this yields the following corollaries.
\begin{corollary}
Assume $A$ is indecomposable. Assume, moreover, that $\Gamma_f(A)$ is strongly connected, and that $p>M_A$. Then $G$ is locally indecomposable.
\end{corollary}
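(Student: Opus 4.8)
The plan is to obtain the corollary as a direct application of Theorem~\ref{thm:LDG}, whose conclusion is precisely that $G$ is locally indecomposable. So the whole task reduces to checking that the three hypotheses of that theorem are met under the assumptions of the corollary. Two of these hypotheses---that $A$ is indecomposable and that $\Gamma_f(A)$ is strongly connected---are assumed verbatim in the statement, so there is nothing to do for them.

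The only hypothesis of Theorem~\ref{thm:LDG} not assumed outright is that $G$ be locally finitely generated, and I would supply this via Lemma~\ref{lemma:loc_fintype}(1). That lemma asserts that $p > M_A$, where $M_A := \max_{i \neq j}|a_{ij}|$, is a sufficient condition for local finite generation; since the corollary assumes exactly this inequality, Lemma~\ref{lemma:loc_fintype}(1) applies and yields that $G$ is locally finitely generated. With all three hypotheses of Theorem~\ref{thm:LDG} now verified, its conclusion gives local indecomposability of $G$.

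I expect no real obstacle, as the corollary is essentially a repackaging of Theorem~\ref{thm:LDG} together with the first of the two sufficient conditions for local finite generation collected in Lemma~\ref{lemma:loc_fintype}. The one point worth stating explicitly is that the quantity $M_A$ appearing in the corollary's hypothesis $p > M_A$ is the same $M_A = \max_{i \neq j}|a_{ij}|$ invoked in Lemma~\ref{lemma:loc_fintype}(1), so the inequality transfers directly with no reinterpretation needed.
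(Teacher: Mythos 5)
Your proposal is correct and matches the paper's own (implicit) proof exactly: the paper derives this corollary by combining Theorem~\ref{thm:LDG} with Lemma~\ref{lemma:loc_fintype}(1), which is precisely your argument. Your observation that the $M_A$ in the corollary's hypothesis is the same quantity as in Lemma~\ref{lemma:loc_fintype}(1) is accurate and the application is immediate.
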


\begin{corollary}
Assume $A$ is of indecomposable $2$-spherical type. Assume, moreover, that $q\geq 3$ if $M_A=2$ and $q\geq 4$ if $M_A=3$.
Then $G$ is locally indecomposable.
\end{corollary}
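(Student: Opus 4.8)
The plan is to derive this corollary directly from Theorem~\ref{thm:LDG}, whose three hypotheses are that $A$ is indecomposable, that $G$ is locally finitely generated, and that $\Gamma_f(A)$ is strongly connected. The indecomposability of $A$ is part of the present assumptions, so the task reduces to verifying the remaining two conditions. Each of these follows from a single fact already recorded in the preliminaries, so no genuinely new argument is needed; the work is purely one of matching hypotheses.

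First I would establish that $G$ is locally finitely generated by invoking Lemma~\ref{lemma:loc_fintype}(2). The hypotheses of that lemma are precisely that $A$ is $2$-spherical, together with $q \ge 3$ when $M_A = 2$ and $q \ge 4$ when $M_A = 3$. These coincide verbatim with the assumptions of the corollary, so Lemma~\ref{lemma:loc_fintype}(2) applies and yields that $G$ is locally finitely generated.

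Next I would check that $\Gamma_f(A)$ is strongly connected. Since $A$ is $2$-spherical, we have $a_{ij}a_{ji} \le 3$ for all distinct $i,j \in I$, so by definition $\Gamma_f(A)$ has an edge between every pair of distinct vertices; that is, $\Gamma_f(A)$ is the complete graph on the vertex set $I$. Deleting the vertices of any subset $J \subseteq I$ then leaves a complete, hence connected, graph on $I \setminus J$, so $\Gamma_f(A)$ is strongly connected. This is exactly the instance of strong connectivity already noted just before Proposition~\ref{prop:one-ended} in Section~\ref{subsec:ends}, and I would simply cite it there. With all three hypotheses verified, Theorem~\ref{thm:LDG} then gives that $G$ is locally indecomposable.

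Since every step is an appeal to a previously established result, there is no substantive obstacle here; this is a bookkeeping corollary. The only points requiring minimal care are that the numerical conditions on $q$ are the ones appearing in Lemma~\ref{lemma:loc_fintype}(2), and that $2$-sphericity forces $\Gamma_f(A)$ to be complete. The finite-type case (permitted, since a $2$-spherical indecomposable $A$ may still be of finite type) is harmless: Theorem~\ref{thm:LDG} disposes of it separately by observing that $G^{(1)}$ is then finite and the group is trivially locally indecomposable.
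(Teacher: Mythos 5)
Your proof is correct and matches the paper's intended argument exactly: the paper derives this corollary from Theorem~\ref{thm:LDG} combined with Lemma~\ref{lemma:loc_fintype}(2), using the observation (stated just before Proposition~\ref{prop:one-ended}) that $2$-sphericity and indecomposability make $\Gamma_f(A)$ strongly connected. Your only additions --- spelling out that $\Gamma_f(A)$ is the complete graph and noting that the finite-type case is absorbed into Theorem~\ref{thm:LDG} --- are accurate elaborations of what the paper leaves implicit.
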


\subsection{Open questions}

Corollary~\ref{corintro:localindecKM} shows that a large class of locally compact Kac--Moody groups are locally indecomposable. On the other hand, it might well be that the additional assumptions in Theorem~\ref{intro:KM_loc_indec} can be removed, which leads to the following question.

\begin{question}
Does every complete geometric Kac--Moody group over a finite field have a trivial decomposition lattice?
\end{question}

More generally, as mentioned in the introduction, locally compact Kac--Moody groups might even have trivial \emph{centraliser} lattice. This calls for an answer to the following question.

\begin{question}
Does every complete geometric Kac--Moody group over a finite field have a trivial centraliser lattice?
\end{question}

\bibliographystyle{amsalpha} 
\bibliography{LocalindecKM}

\end{document}